\lstdefinelanguage{Julia}%
  {morekeywords={abstract,break,case,catch,const,continue,do,else,elseif,%
      end,export,false,for,function,immutable,import,importall,if,julia,in,%
      macro,module,otherwise,quote,return,switch,true,try,type,typealias,%
      using,while},%
   sensitive=true,%
   alsoother={},%
   morecomment=[l]\#,%
   morecomment=[n]{\#=}{=\#},%
   morestring=[s]{"}{"},%
   morestring=[m]{'}{'},%
}[keywords,comments,strings]%
\bfseries\color{blue},
\newcommand{\amp}{A}
\newcommand{\tcr}[1]{{#1}}
\newcommand{\D}{\mathrm{d}}
\newcommand{\E}{\mathrm{e}}
\newcommand{\I}{\mathrm{i}}
\DeclareMathOperator{\imag}{Im}
\DeclareMathOperator{\real}{Re}
\DeclareMathOperator{\sign}{sign}
\def\XXint#1#2#3{{\setbox0=\hbox{$#1{#2#3}{\int}$}
     \vcenter{\hbox{$#2#3$}}\kern-.5\wd0}}
\newcommand*{\Scale}[2][4]{\scalebox{#1}{\ensuremath{#2}}}
\newtheorem{RHproblem}{Riemann--Hilbert Problem}
\renewcommand{\cdot}{\diamond}
\begin{document}

\title[Scattering and inverse scattering for the AKNS system]{Scattering and inverse scattering for the AKNS system:  A rational function approach}  

\author[T. Trogdon]{Thomas Trogdon\thanks{Address for
correspondence: Thomas Trogdon, Department of Applied Mathematics,
University of Washington, WA 98195-3925; email:
trogdon@uw.edu}}\affil{Department of Applied Mathematics,
University of Washington, WA, USA, 98195-3925}

\maketitle

\begin{center}
  \emph{ \small This paper is based on a talk given at the Recent Advances in Nonlinear Waves conference at the University of Washington in 2017 in honor of Harvey Segur's 75th birthday.}
\end{center}
\begin{abstract}
We consider the use of rational basis functions to compute the scattering and inverse scattering transforms associated with the AKNS system. The proposed numerical forward scattering transform computes the solution of the AKNS system that is valid on the entire real axis and thereby computes a reflection coefficient at a point by solving a single linear system. The proposed numerical inverse scattering transform makes use of a novel improvement in the rational function approach to the oscillatory Cauchy operator, enabling the efficient solution of certain Riemann--Hilbert problems without contour deformations.  The latter development enables access to high-precision computations and this is demonstrated on the inverse scattering transform for the one-dimensional Schr\"odinger operator with a $\mathrm{sech}^2$ potential.
\end{abstract}

\section{Introduction}

We consider scattering and inverse scattering for the (Ablowitz-Kaup-Newell-Segur) AKNS \cite{Ablowitz1974} system
\begin{align}\label{eq:AKNS}
\frac{\D\mu }{\D x}(x;k) = \begin{bmatrix} - \I k & q(x) \\ r(x) & \I k \end{bmatrix} \mu(x;k), \quad \mu(\cdot;k): \mathbb R \to \mathbb C^{2 \times 2}. 
\end{align}
This includes the system considered previously by Zakharov and Shabat \cite{Zakharov-Shabat} by restricting to the case where $r(x) = - \overline{q(x)}$ and that considered by Gardner-Green-Kruskal-Miura \cite{GGKM} by setting $q = -1$.  The theoretical developments related to the scattering transforms for this system are reviewed in Section~\ref{sec:scattering} and are entirely classical.  Alternate references include the original AKNS paper \cite{Ablowitz1974}, the paper by Deift and Trubowitz \cite{Deift1979} and the books \cite{AblowitzClarksonSolitons,FokasComplexVariables,johnson,Ontheline}, to name a few.  To avoid technical considerations, we will suppose that $q,r$ are infinitely differentiable functions with some degree of exponential decay.  On occasion we will point to weaker assumptions for different aspects of the theory.   

An overview of the approach is that \eqref{eq:AKNS} represents a spectral problem with spectral parameter $k$.  Indeed, it is elementary to turn \eqref{eq:AKNS} into a \emph{bona fide} eigenvalue problem, see \eqref{eq:ode-evalprob}.  For each $k$ in the spectrum there is spectral data defined.  Because \eqref{eq:AKNS}, as a densely defined operator on $L^2(\mathbb R)$, is a relatively compact perturbation of a self-adjoint operator, the spectrum consists of $\mathbb R$ with the possible addition of a discrete set of $L^2(\mathbb R)$ eigenvalues \cite{Schechter1965}.  It is indeed a non-trivial statement that the potentials $q,r$ can be recovered from the spectral data.  The process of computing the spectral data is called scattering (or the scattering transform) and the process of reconstructing the potentials is called inverse scattering (or the inverse scattering transform).  These transforms are necessarily nonlinear.

A number of authors have developed numerical approaches to compute the scattering and inverse scattering transforms, see \cite{boffetta,Osborne1991,Wahls2018,TrogdonSONLS,TrogdonSOKdV}, for example.  Some, such as the approach in \cite{Wahls2018}, focus on speed while others, see \cite{TrogdonSONLS,TrogdonSONNSD}, for example, focus on accuracy. The reason for this tradeoff can be understood in the following way.  \tcr{When one uses the fast Fourier transform (FFT) to approximate true Fourier coefficients,} one obtains both speed and accuracy --- (1) the transform requires just $O(n \log n)$ floating point operations to compute $n$ Fourier coefficients and (2) for smooth functions these coefficients converge spectrally\footnote{We say that $f_n \to f$ spectrally if $|f_n - f| = O(n^{-\alpha})$ for any fixed $\alpha > 0$ as $n \to \infty$.}.  But this could be interpreted as a happy accident because equally-spaced nodes for periodic functions both enable the exponentially convergent trapezoidal rule \cite{Trefethen2014} and the butterfly decomposition for the discrete Fourier transform matrix.  As this structure is deformed, one may have to choose accuracy over speed or vice versa.  \tcr{This dichotomy has been disturbed by the recent work in \cite{Chimmalgi2019} where high-order integrators are used to obtain both speed and accuracy for the scattering transform.}

The problem at hand is more difficult due to nonlinearity --- in the classical Fourier setting a ten-fold increase in amplitude will have a marginal effect on overall accuracy (loss of one digit of absolute accuracy) yet for a scattering transform this should be expected to noticeably increase the amount of computational work required to achieve the same accuracy.  For example, \tcr{an increase in amplitude for $q,r$ by a factor of 10} could turn an operator, producing no $L^2(\mathbb R)$ eigenvalues, to one producing 62 eigenvalues.

The approach in the current paper is an entirely rational function-based approach to computing both the scattering and inverse scattering transforms.  The rational basis we consider is convenient because it is closed under differentiation, function multiplication and the action of the Cauchy operator.  In Section~\ref{sec:FT} we discuss the use of a rational basis to derive two methods to compute the classical Fourier transform on $\mathbb R$.  These methods, while both effective, serve to indicate a way forward to compute with the scattering transforms.  In Section~\ref{sec:scattering} we describe the classical scattering transforms for the AKNS system.  We determine both left and right scattering data.  In Section~\ref{sec:numerical_scattering} we describe how to extend the first method in Section~\ref{sec:FT} to compute the scattering data for the AKNS system.  In Section~\ref{sec:inverse} we describe how to compute the inverse scattering transform by solving Riemann--Hilbert problems numerically.  These Riemann--Hilbert problems are equivalent to singular integral equations and the (infinite-dimensional) GMRES algorithm can be applied as in \cite{Trogdon2013} using the so-called oscillatory Cauchy operator without any need for contour deformations.   Most of the previous work using Riemann--Hilbert theory, see \cite{TrogdonSONLS}, for example, had required contour deformations. 
The crucial improvement made in the current work is detailed in Appendix~\ref{sec:cauchy}:  The numerical method in \cite{Trogdon2013} required the unstable evaluation of special functions whereas the method developed in the current work avoids this instability by leveraging the FFT and increases the speed to $O(m^2)$  to compute the oscillatory Cauchy operator of a linear combination of $m$ basis functions --- potentially suboptimal but still a vast improvement.  Due to this improvement, high-precision computations can be made and in Section~\ref{sec:hi-prec} we compute the inverse scattering transform for the Schr\"odinger operator with a potential of the form $U_0 \mathrm{sech}^2(x)$ to nearly a uniform accuracy of $10^{-50}$.  

\tcr{The aforementioned deformations of the the associated singular integral operator and the discretization of it as discussed in \cite{SORHFramework} leads to a dense linear system. While this is sufficient for the computations in \cite{TrogdonSONLS}, and has increasing benefit for the large-time regime in the solution of the nonlinear Schr\"odinger equation, it precludes the use of higher-precision arithmetic.  For example, to produce the solution discussed in  Figure~\ref{f:hiprec-err}, without using an iterative method, one would need to consider a linear system of size approximately $8192 \times 8192$ with entries stored using $70$ digits of accuracy.  Supposing that such a number takes approximately $4 \times 64$ bits to store it, such a linear system would require over 4 gigabyes of memory to store, and a long time to construct.  This should be compared with storage of less than a megabyte per GMRES iteration for the method proposed here with only an FFT for precomputation.  And this efficiency enables trivial parallel computation. }

\begin{remark}
All code required to produce the plots in the paper is available, written, almost entirely, in the {\tt Julia} language  \cite{Trogdon2021,Trogdon2021a}.
\end{remark}
 
We end the introduction with the fixing of some elementary notation.  Let
\begin{align*}
    \mathbb C^\pm = \{z \in \mathbb C : \pm \real z > 0 \}.
\end{align*}
The character $\hat{\phantom{f}}$ will be used to denote the Fourier transform of a function
\begin{align*}
  \hat f(k) = \int_{-\infty}^\infty f(x) \E^{-\I k x} \D x.
\end{align*}
And $(\cdot)$ will be used to refer to the independent variable of a function, i.e., if $g(x) = x f(x)$ we write $g = (\cdot) f(\cdot)$.

\section{Two methods for computing the Fourier transform}\label{sec:FT}

The forward and inverse transforms that are computed in the following sections can best be understood, at a high level, by a discussion of the computation of the Fourier transform.  Consider the differential equation
\begin{align}\label{eq:scalarmu}
\frac{\D \mu}{\D x}(x;k) - \I k \mu(x;k) = q(x).
\end{align}
All solutions of this are of the form
\begin{align*}
    \mu(x;k) = c_0 \E^{\I k x} + \int_a^x \E^{\I k(x-s)} q(s) \D s.
\end{align*}
Next, we consider two solutions $\mu^\pm(x;k)$ satisfying $\lim_{x \to \pm\infty} \mu^\pm(x;k) = 0$. Thus
\begin{align*}
    \mu^\pm(x;k) = \int_{\pm \infty}^x\E^{\I k(x-s)} q(s) \D s.
\end{align*}
It follows immediately that $\mu^\mp(x;k)$, as a function of $k$, has an analytic extension  to ${\mathbb C^\pm}$ that extends continuously to $\mathbb R$. Define a sectionally analytic function $m: \mathbb C \setminus \mathbb R \to \mathbb C$
\begin{align*}
    m(k) = m(k;x) = \begin{cases} \mu^-(x;k) & \imag k > 0,\\
    \mu^+(x;k) & \imag k < 0. \end{cases}
\end{align*}
For each $x$, it follows that $m$ solves the following Riemann--Hilbert  problem, provided $q \in L^1 \cap L^2(\mathbb R)$.
\begin{RHproblem}\label{rhp:0}
Find $u(\cdot;x): \mathbb R \to \mathbb C$ such that
\begin{align*}
    m(k;x) = \frac{1}{2\pi \I} \int_{-\infty}^\infty \frac{u(k';x)}{k' -k} \D k', \quad k \not \in \mathbb R,
\end{align*}
and
\begin{align*}
    m^+(k) - m^-(k) &= \E^{\I k x} \hat q(k), \quad \hat q(k)=  \int_{-\infty}^\infty \E^{-\I ks} q(s) \D s,\\
    m^\pm(k) &= \lim_{\epsilon \to 0^+} m( k \pm \I \epsilon).
\end{align*}
\end{RHproblem}
We note two things:
\begin{enumerate}
    \item We can compute $\hat q(k)$ from the large $x$ behavior of $\mu^-$ using
    \begin{align}\label{eq:large-x-ft}
        \lim_{x \to \infty} \E^{- \I k x} \mu^-(x;k) = \hat q(k).
    \end{align}
\item From the differential equation \eqref{eq:scalarmu} it follows that (see \cite[Section 2.5.3]{TrogdonSOBook}, for example)
\begin{align*}
    -\lim_{|k| \to \infty} \I k m(x;k) = q(x).
\end{align*}
\end{enumerate}

We now use these two observations to compute the Fourier transform in two ways.

\subsection{The ODE method for the Fourier transform}\label{sec:ode}

We implement a numerical method to compute the Fourier transform via \eqref{eq:AKNS} using rational basis functions.  This method can be considered a Levin-type method \cite{Levin} (see also \cite{A.Iserles2006}).  Define, using the basis defined in \eqref{eq:Rjs},
\begin{align*}
    \mathfrak r(x) =  \frac{4\nu}{\nu^2+x^2} =  - R_{1,0}(x) - R_{-1,0}(x).
\end{align*}
For an integrable function $f$ define
\begin{align*}
    \phi(x;k,f) =\phi(x;s) = \int_{-\infty}^x \E^{\I k (x -s)} f(s) \D s.
\end{align*}
We seek a solution of \eqref{eq:scalarmu} of the form
\begin{align*}
    \mu(x;k) = c_0 \phi(x;k,\mathfrak r) + \sum_{j \neq 0} c_j R_{j,0}(x).
\end{align*}
A formula for $\phi(x;k,\mathfrak r)$, $\nu = 1$, can be found in terms of incomplete Gamma functions $\Gamma(a,z)$ \cite{DLMF}
\begin{align*}
\phi(x;k,\mathfrak r) &= \frac{1}{2 \pi \I} \begin{cases} U(x,k) & k \leq 0,\\
- \overline{U(x,-k)} & k > 0, \end{cases}\\
U(x,k) &= \E^{\I k x} \left( \E^{-k} \Gamma(0,\I k x - k) - \E^{k} \Gamma(0,\I k x + k)  + \pi \I \E^{k} (1 + \mathrm{sign}(x))\right),\\
\mathrm{sign}(x) &= \begin{cases} 1 & x \geq 0,\\ -1 & x < 0. \end{cases}
\end{align*}
This can be derived using Riemann--Hilbert Problem~\ref{rhp:0}.  But we do not need this formula in what follows:  We only need the limit \eqref{eq:large-x-ft}, not the evaluation of $\mu(x;k)$ pointwise.

The unknowns $c_0,c_{\pm 1}, c_{\pm 2},\ldots$ are ordered as
\begin{align*}
    \vec c = \begin{bmatrix}
    c_0 & c_1 & c_{-1} & c_2 & c_{-2} & \cdots & 
    \end{bmatrix}^T.
\end{align*}
Then the action of the differential operator $\frac{\D}{\D x}$ on the basis $\{R_{j,0}\}$ is captured by
\begin{align}\label{eq:Dnu}
   \mathcal D_\nu = \begin{bmatrix} \frac{\I}{\nu} & & -\frac{\I}{\nu} \\
   & -\frac{\I}{\nu} & & \frac{\I}{\nu} \\
    - \frac{ \I}{2\nu}  & & \frac{2\I}{\nu} &  & - \frac{3 \I}{2\nu}\\
     & \frac{ \I}{2\nu}  & & -\frac{2\I}{\nu} &  &  \frac{3 \I}{2\nu}\\
     &  & -\frac{ \I}{\nu}  & & \frac{3\I}{\nu} &  &  -\frac{2 \I}{\nu}\\
     & &  & \frac{ \I}{\nu}  & & -\frac{3\I}{\nu} &  &  \frac{2 \I}{\nu}\\
     & & &  & - \frac{3 \I}{2\nu}  & & \frac{4 \I}{\nu} &  &  \ddots \\
     &&&&&\ddots&& \ddots
   \end{bmatrix},
\end{align}
see \eqref{eq:tridiag-diff}.  Suppose
\begin{align*}
    \sum_{j \neq 0} q_j R_{j,0}(x) = q(x).
\end{align*}
Then we must solve the banded system
\begin{align}\label{eq:banded}
     \Scale[.9]{\begin{bmatrix} -1 & \frac{\I}{\nu} & & -\frac{\I}{\nu} \\
   -1 && -\frac{\I}{\nu} & & \frac{\I}{\nu} \\
    &- \frac{ \I}{2\nu}  & & \frac{2\I}{\nu} &  & - \frac{3 \I}{2\nu}\\
     && \frac{ \I}{2\nu}  & & -\frac{2\I}{\nu} &  &  \frac{3 \I}{2\nu}\\
     &&  & -\frac{ \I}{\nu}  & & \frac{3\I}{\nu} &  &  -\frac{2 \I}{\nu}\\
     && &  & \frac{ \I}{\nu}  & & -\frac{3\I}{\nu} &  &  \frac{2 \I}{\nu}\\
     && & &  & - \frac{3 \I}{2\nu}  & & \frac{4 \I}{\nu} &  &  \ddots \\
     &&&&&&\ddots&& \ddots
   \end{bmatrix}\begin{bmatrix}
    c_0 \\ c_1 \\ c_{-1} \\ c_2 \\ c_{-2} \\ \vdots 
    \end{bmatrix} = \begin{bmatrix}
     q_1 \\ q_{-1} \\ q_2 \\ q_{-2} \\ \vdots 
    \end{bmatrix}.}
\end{align}
This can be done using the adaptive QR algorithm \cite{Olver2013a}, for example, to a prescribed tolerance. \tcr{ Specifically, the adaptive QR algorithm is applicable to semi-infinite linear systems $A \vec x = \vec b$ that are banded below and it can be implemented using Givens rotations to put a matrix into upper-triangular form while monitoring the right-hand side vector throughout the process.  At a finite step $k$ one arrives at a semi-infinite linear system
\begin{align*}
    \begin{bmatrix}R_k & U_k \\ 0 & A_k \end{bmatrix}  \vec x = Q_k^T \vec b =  \begin{bmatrix} \vec b_k \\ \vec r_k \end{bmatrix}
\end{align*}
where $R_k$ is upper-triangular.  If $\|\vec r_k\|_2$ is small and the original operator is well conditioned, one is guaranteed that $\vec x_k := R_k^{-1}\vec b_k$, augmented with zeros, is close to $\vec x.$  And if $\vec b$ has entries that decay rapidly (which is the case here because $q$ is smooth, see \cite{Trogdon2014}), $k$ might be quite small. Note that this method is yet to be implemented in the current code.}  See Section~\ref{sec:Gauss} for a discussion of some nuances for the truncation of the system to a finite one. Then \eqref{eq:large-x-ft} implies
\begin{align}\label{eq:qc0}
    \hat q(k) \approx c_0 \hat{\mathfrak r} (k).
\end{align}
And, as we will see, an advantage of this method over the method for the Fourier transform discussed in the next section is that one is free to replace $\mathfrak r$ with another function $\mathfrak f$, and this function can mirror some properties of the function one is trying to transform.  Most importantly, if $\mathfrak f$ and $q$ have some degree of exponential decay, analytic continuations of $\hat q$ into the complex $k$-plane can be computed.

\begin{remark}
Interestingly, in the relation \eqref{eq:qc0}, one first computes
\begin{align*}
    c_0 = \frac{\hat q(k)}{\hat{\mathfrak r} (k)}.
\end{align*}
So, it is (1) important that $\mathfrak r$ has a Fourier transform that vanishes nowhere, and (2) possible to compute the deconvolution by solving an ODE!
\end{remark}

\begin{remark}
\tcr{
The parameter $\nu$ present in the basis $R_{j,\alpha}$ is important.  In \cite{Weideman1994} it was used to reduce the number of terms required to compute the complex error function to a prescribed tolerance.  In a similar manner, one could choose $\nu$ that the decay rate of the right-hand side of \eqref{eq:banded} is optimized, i.e., to make $|q_{|j|}|$ decay faster as $j$ increases.}
\end{remark}

\subsection{The Cauchy integral method for the Fourier transform}\label{sec:cauchy}

Now, considering the preceding method, it is entirely reasonable to suppose that we can evaluate $\hat q(k)$ pointwise (with a high degree of accuracy), and expand:
\begin{align*}
    \hat q(k) = \sum_{j\neq 0} c_j R_{j,0}(k),
\end{align*}
supposing it has such an expansion. General conditions for such an expansion to exist can be found in \cite{Trogdon2014}. It then follows that 
\begin{align*}
    m(x;k) = \frac{1}{2 \pi \I} \sum_{j\neq 0} c_j \int_{-\infty}^\infty R_{j,x}(k') \frac{\D k'}{k' - k},
\end{align*}
and therefore
\begin{align*}
    q(x) = - \frac{1}{\pi} \sum_{j\neq 0} c_j \lim_{|k| \to \infty} \int_{-\infty}^\infty R_{j,x}(k') \frac{k \D k'}{k' - k}.
\end{align*}
Then this limit is computed using \eqref{eq:large-k-rj}, giving
\begin{align}\label{eq:qx}
    q(x) = \begin{cases}
     \displaystyle - \nu \sum_{j\neq 0} c_j |j|& x = 0,\\
     \displaystyle -2 \nu \sum_{\sign(x)j > 0} c_j L_{|j|-1}^{(1)}(2 |x| \nu)  \E^{-|x| \nu}& \text{otherwise},
    \end{cases}
\end{align}
where the empty sum is taken to be zero. \tcr{Here $L_j^{(\alpha)}$ is the generalized Laguerre polynomials of degree $j$ with parameter $\alpha$, see \cite{DLMF}.  This formula should be compared with that in \cite{Weber1980}.} This series can be easily and stably evaluated using Clenshaw's algorithm \cite{Clenshaw1955} in conjunction with the three-term recurrence for Laguerre polynomials.  \tcr{If one could develop a fast Laguerre transform, i.e., a mapping, and its inverse, from function values at the $n$ roots of $L_n^{(\alpha)}$ to coefficients in a interpolatory Laguerre series in $O(n\,\mathrm{polylog}(n))$ operations \`a la \cite{Hale2016},} then the formula \eqref{eq:qx} would give a competitive alternative to the FFT for the whole line Fourier transform (see also \cite{Weber1980}).

The simpler way to present the material in this section would be to simply present the formula for the Fourier transform of $R_{j,0}$.  However, discussing how the Fourier transform and this formula arises out of the limit of a Cauchy integral is important in the inverse scattering step for the AKNS system.

\subsection{Examples}

\subsubsection{Fourier transform of a Gaussian}\label{sec:Gauss}

As a baseline we consider errors encountered in the computation of the Fourier transform of $q(x) = \E^{-x^2}$, $\hat q(k) = \sqrt{\pi} \E^{-k^2/4}$.  In Figure~\ref{f:least-squares-ft-error} we show the errors for solving \tcr{a $n \times 2000$ least-squares system via the pseudo-inverse as $n$ varies from $n = 100$, doubling to $n = 1600$.  There is clearly slow convergence at near $k = 0$ and this shows that there can be a price to pay for adding in more unknowns}.  This can be fixed by taking a square truncation of \eqref{eq:banded} but the rank of this truncation turns out to be affected by $k$ being positive or negative: Suppose $n$ is even.  If $k\leq 0$ then an $n \times n$ truncation should be used, otherwise an $(n+1) \times (n+1)$ truncation should be used.  This produces dramatically better errors as is demonstrated in Figure~\ref{f:ft-error}.  The approach described in Section~\ref{sec:cauchy} is used to approximate $\hat q$ in Figure~\ref{f:lag-ft-error}.  \tcr{Note that adding more rows to the system, without adding more columns, is unnecessary as the matrix is banded below.}

\begin{figure}[tbp]
    \centering
    \begin{subfigure}[b]{.49\textwidth}
         \centering
         \includegraphics[width=\linewidth]{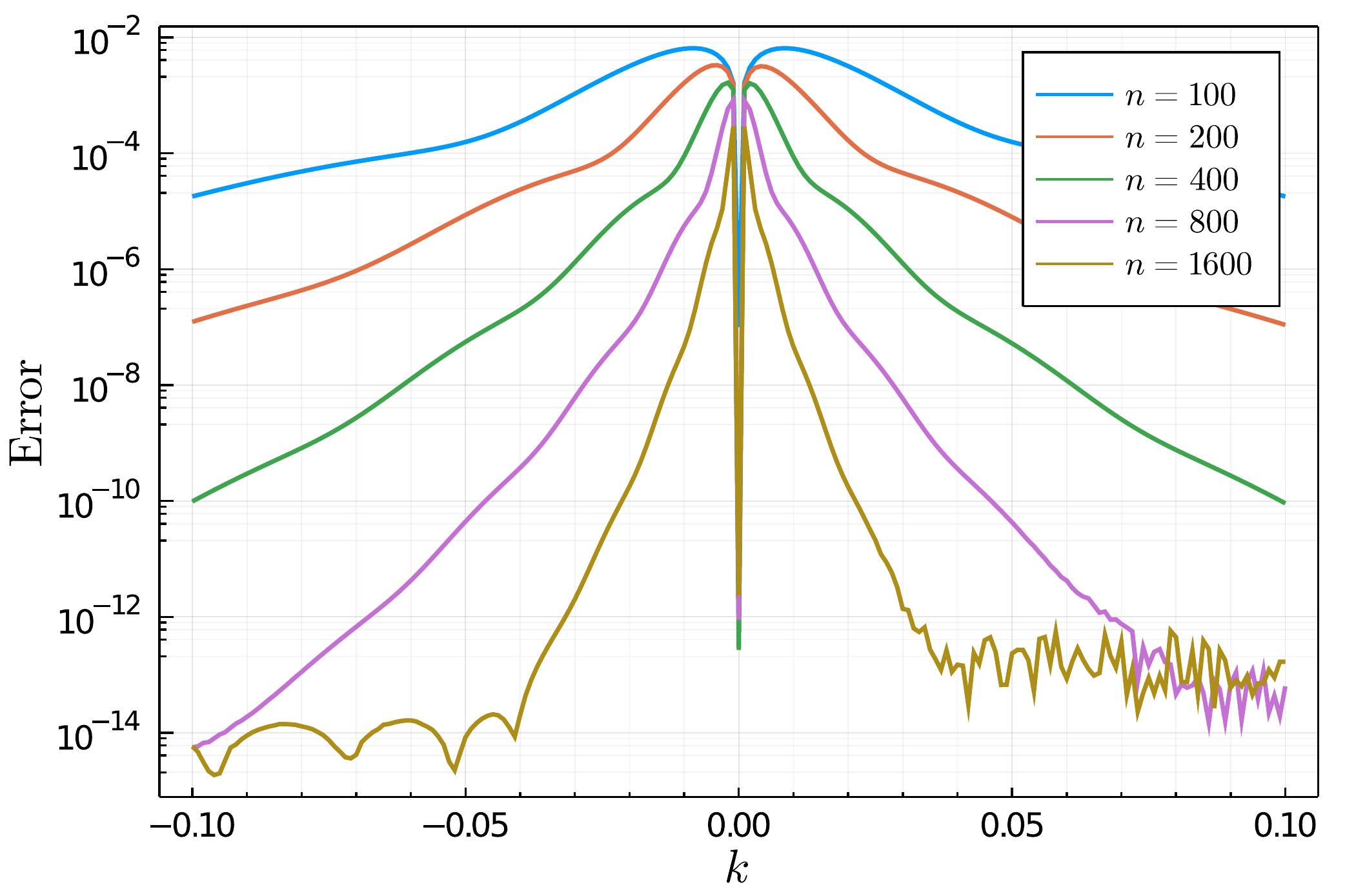}
         \caption{}
         \label{f:least-squares-ft-error}
     \end{subfigure}
     \begin{subfigure}[b]{.49\textwidth}
         \centering
         \includegraphics[width=\linewidth]{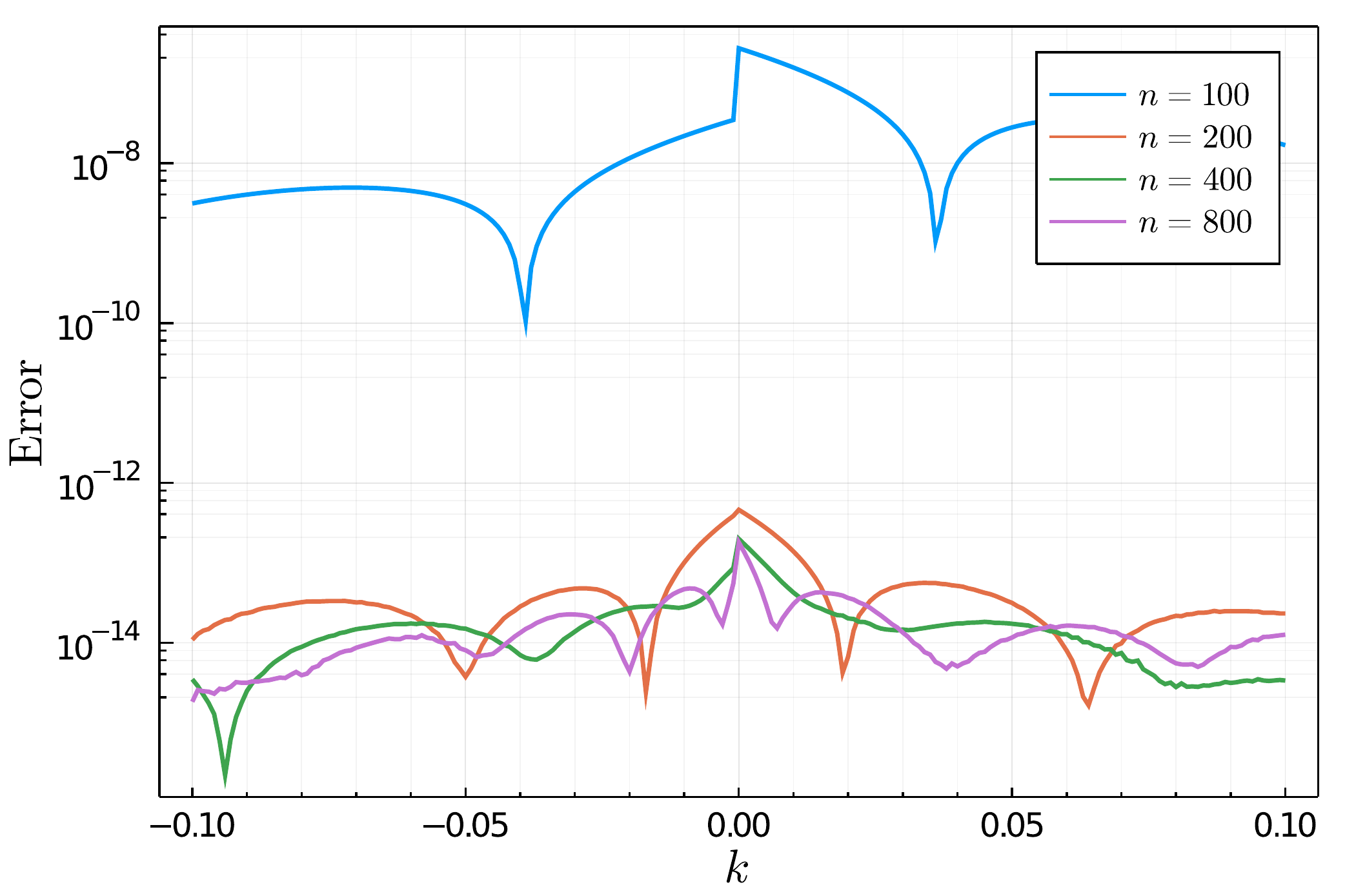}
         \caption{}
         \label{f:ft-error}
       \end{subfigure}
       \caption{(a) The error in computing the Fourier transform of a Gaussian using an $n \times 2000$ finite section of \eqref{eq:banded}.  As $n$ increases the errors decrease but accuracy remains poor near $k = 0$. (b) The error in computing the Fourier transform of a Gaussian using an $n \times n$ finite section of \eqref{eq:banded} if $k \leq 0$ and a $(n+1) \times (n+1)$ finite section for $k > 0$.  As $n$ increases the errors decrease until $n = 400$.}
\end{figure}
\begin{figure}[tbp]
    \centering
    \includegraphics[width=0.49\linewidth]{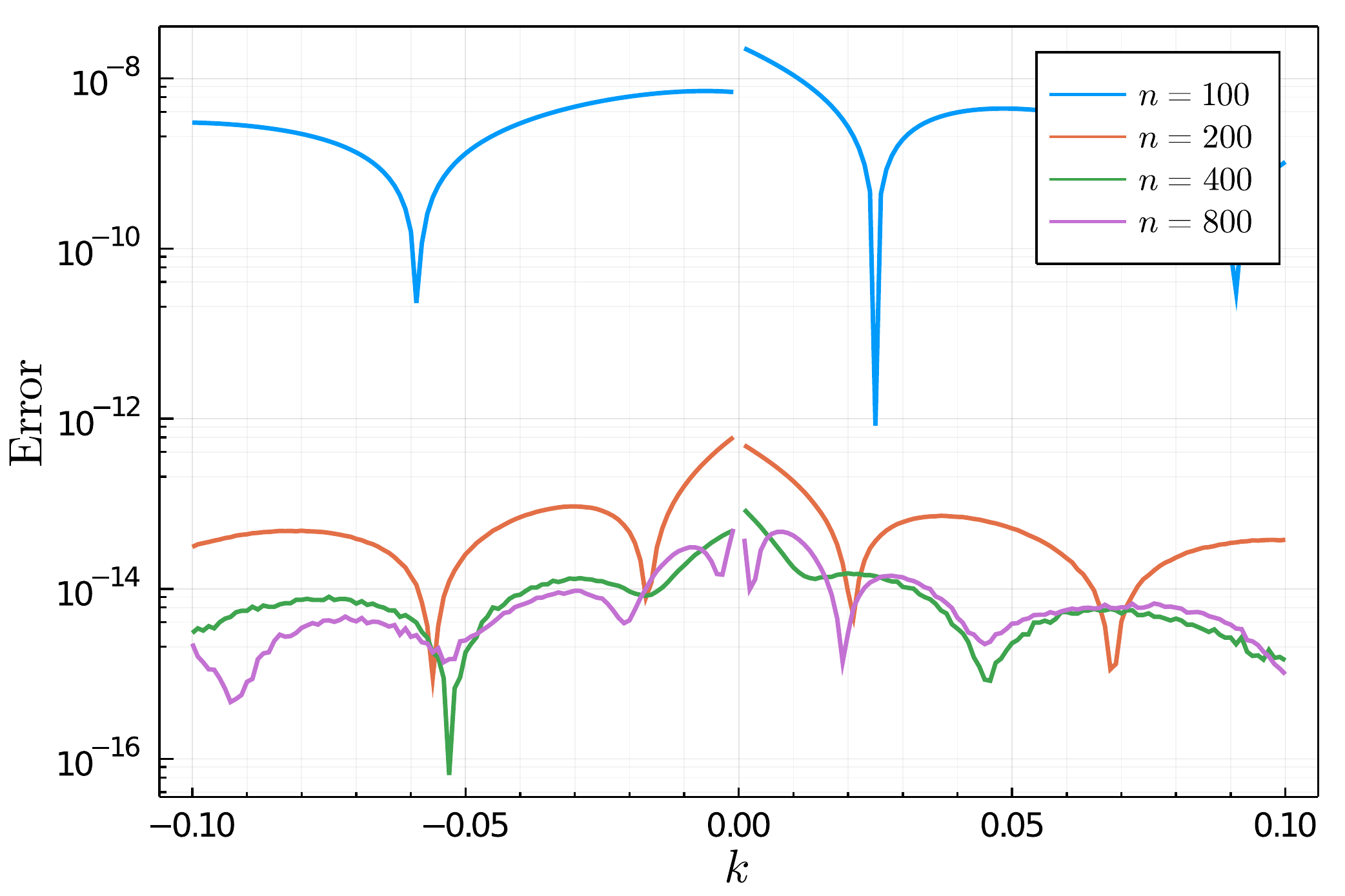}
    \caption{The error in computing the Fourier transform of a Gaussian using \eqref{eq:qx} with $n$ terms in the series.  As $n$ increases the errors decrease until $n = 400$.}
    \label{f:lag-ft-error}
\end{figure}
\begin{figure}[tbp]
    \centering
    \begin{subfigure}[b]{.49\textwidth}
         \centering
         \includegraphics[width=\linewidth]{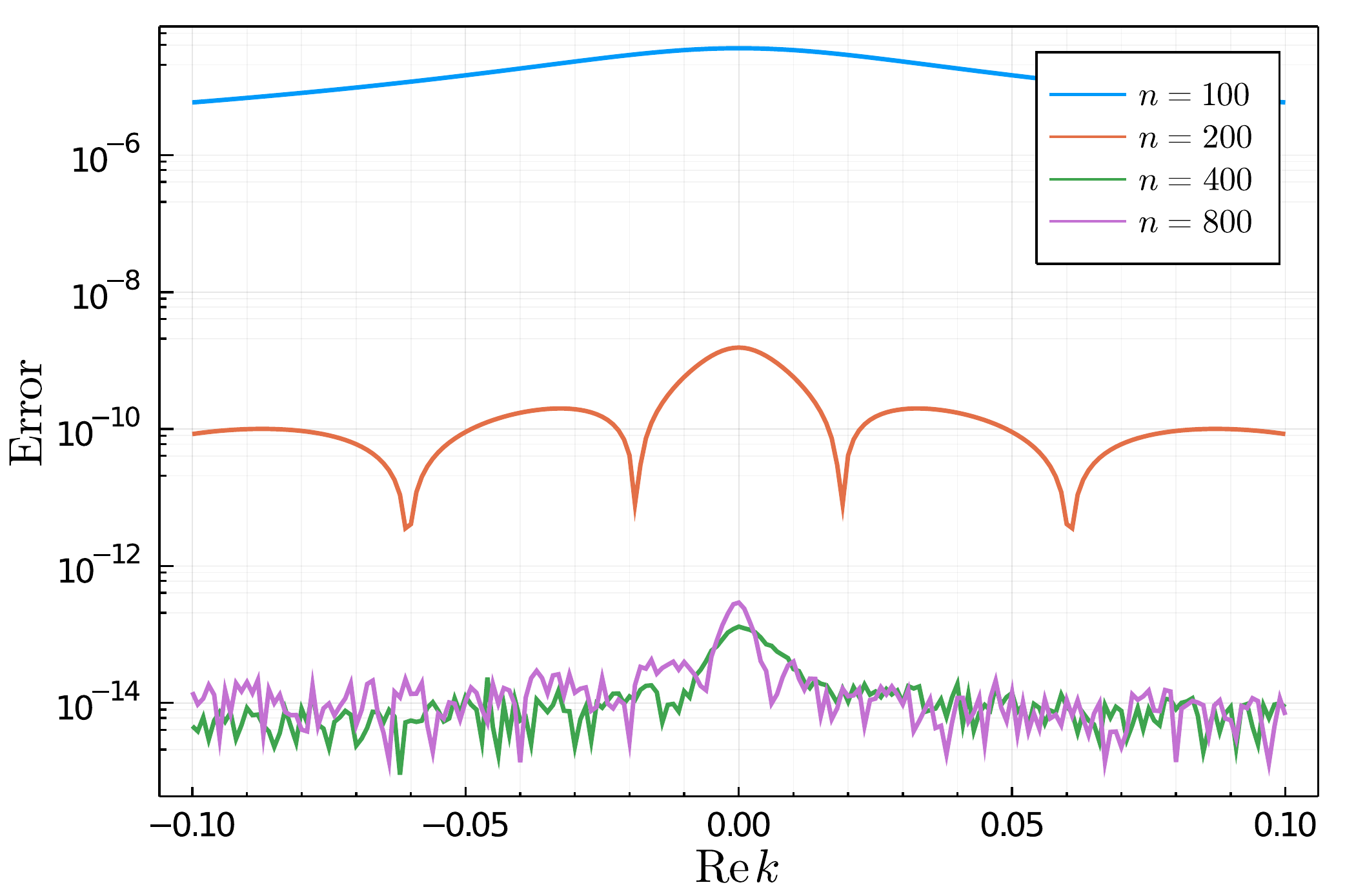}
         \caption{}
         \label{f:ft-gauss-error}
     \end{subfigure}
     \begin{subfigure}[b]{.49\textwidth}
         \centering
         \includegraphics[width=\linewidth]{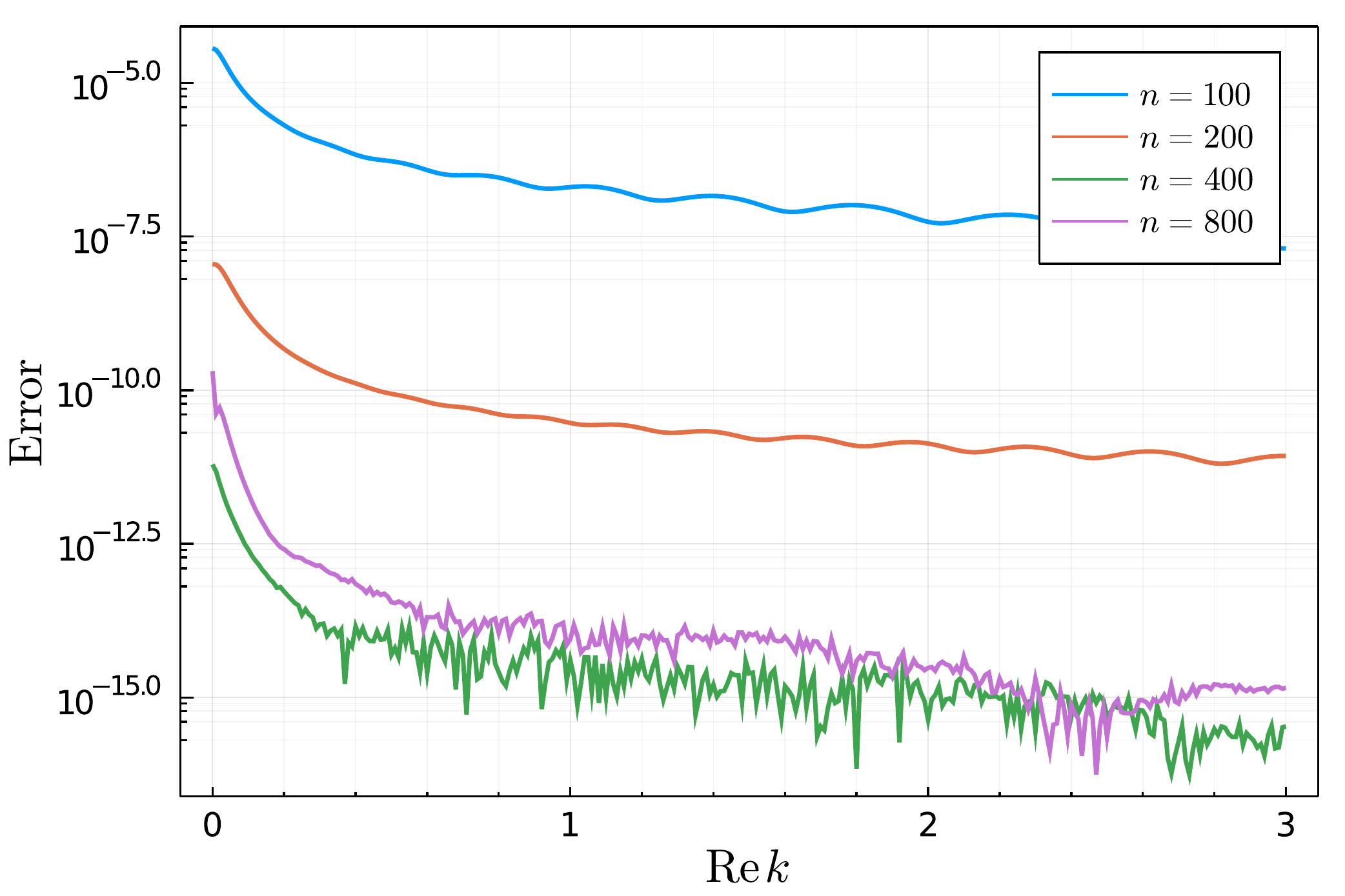}
         \caption{}
         \label{f:ft-gauss-complex-error}
     \end{subfigure}
     \caption{The errors in computing the Fourier transform of a Gaussian in the complex $k$-plane.  (a) The errors along the line $k = \real k + 0.1 \I$.  (b) The errors along the line $k = \real k + 0.1 \I$ for larger values of $k$.  In both (a) and (b) the errors decrease until $n = 400, 800$ and absolute errors are smaller for larger values of $k$.}  \label{f:ft-gauss-complex-ab}
\end{figure}

Since the approach given in Section~\ref{sec:cauchy} is much simpler than that in Section~\ref{sec:ode} it might seem as though the latter has less value despite giving slightly smaller errors.  But it does highlight some phenomena that we will need to be on the look out for in the following sections.  Also, if we wish to evaluate the Fourier transform of a function with some exponential decay in the complex plane, we can \tcr{replace $\mathfrak r$ with a different function, say $\mathfrak g(x) = \E^{-x^2}$}.  This has the added convenience that
\begin{align*}
    \phi(x;k,\mathfrak g) = \frac{\sqrt{\pi}}{2} \E^{-\frac{k^2}{4}} \left( 1 + \mathrm{erf}\left( x + \I \frac{k}{2} \right) \right),
\end{align*}
where $\mathrm{erf}$ is the error function \cite{DLMF}.  To compute this stably for all $x,k \in \mathbb R$, one should rewrite this in terms of the exponentially scaled complementary error function
\begin{align*}
    \mathrm{erfcx}(x) = \E^{x^2} \mathrm{erfc}(x), \quad \mathrm{erfc}(x) + \mathrm{erf}(x) = 1.
\end{align*}
And, interestingly, one of the most effective ways of computing this function is to use the rational basis $\{R_{j,0}\}$, see \cite{Weideman1994} (see also \cite[Section 5.3]{TrogdonSOBook}).  Let $\{g_j\}$ be the coefficients for the expansion of $\mathfrak g$ in the basis $R_{j,0}$:
\begin{align*}
    \mathfrak g(x) = \sum_{j \neq 0} g_j R_{j,0}(x), \quad \vec g = \begin{bmatrix} g_0 \\ g_1 \\ g_{-1} \\ g_2 \\ \vdots \end{bmatrix}.
\end{align*}
Then the first column of \eqref{eq:banded} should be replaced with $\vec g$.  Errors using this methodology to compute the Fourier transform of $f(x) = \E^{-x^2/2}$ off the real axis are shown in Figure~\ref{f:ft-gauss-complex-ab}.  Note that if $f(x) = \E^{-x^2}$ then the numerical method would return the exact (known) transform, and this is the reason for the choice $f(x) = \E^{-x^2/2}$.

\subsubsection{Fourier transform of a rational function}\label{sec:Rat}

Consider something possibly a bit more challenging. We use our methodologies to compute the Fourier transform of the following rational function
\begin{align}\label{eq:rat-fun}
    q(x) = \frac{1}{x - 1 -\I} - \frac{1}{3x + \I}.
\end{align}
Of course,
\begin{align*}
    \hat q(k) = 2 \pi \I \begin{cases}  \E^{-\I k + k}&  k < 0,\\
    \frac 2 3 & k = 0,\\
     \frac{1}{3}\E^{-k/3}  & k > 0. \end{cases}
\end{align*}
The errors resulting from two above approaches in Sections~\ref{sec:ode} and \ref{sec:cauchy} are displayed in Figure~\ref{f:rat-error}.
\begin{figure}[tbp]
    \centering
    \begin{subfigure}[b]{.49\textwidth}
         \centering
         \includegraphics[width=\linewidth]{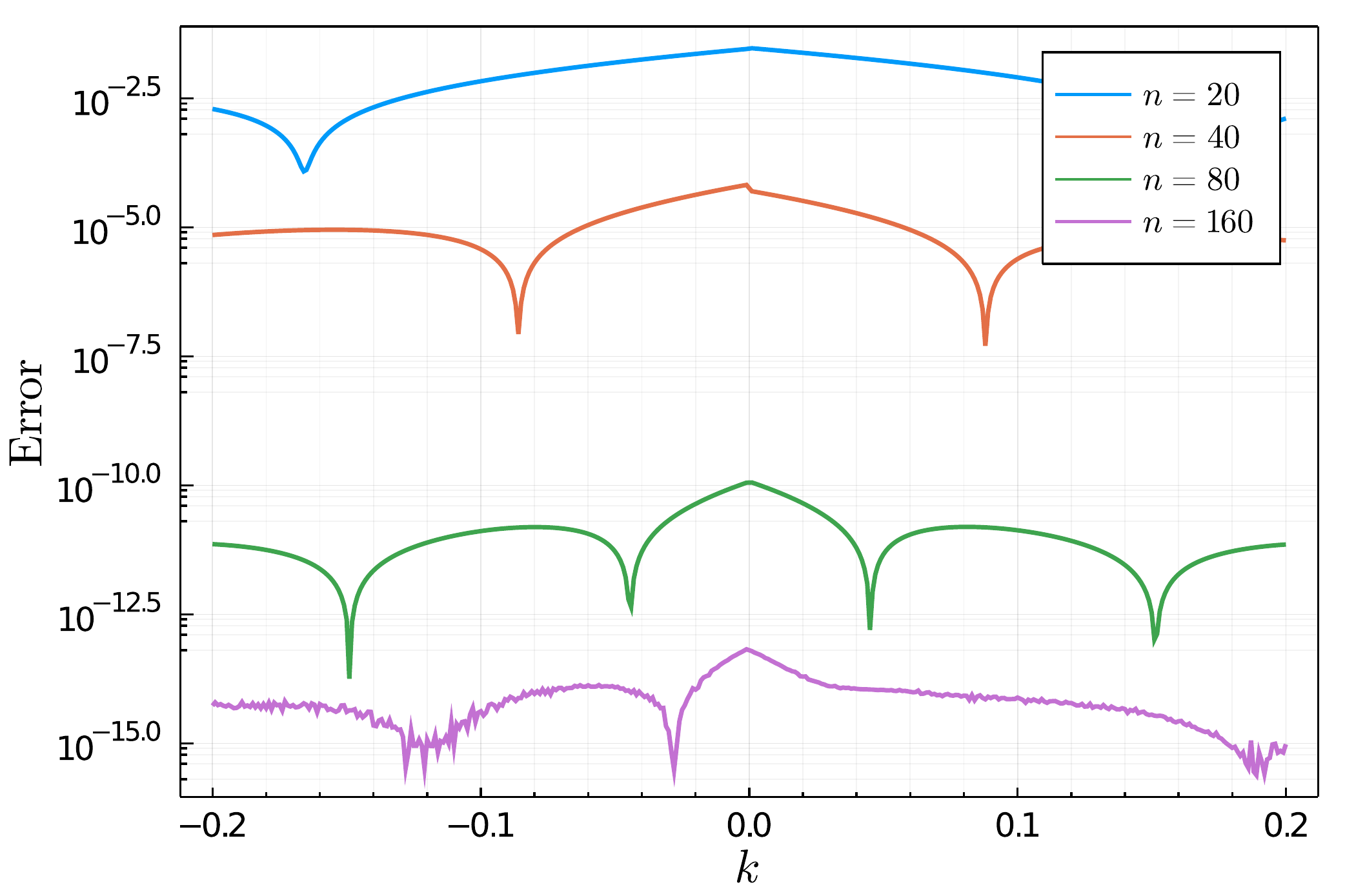}
         \caption{}
         \label{f:ft-rat-error}
     \end{subfigure}
     \begin{subfigure}[b]{.49\textwidth}
         \centering
         \includegraphics[width=\linewidth]{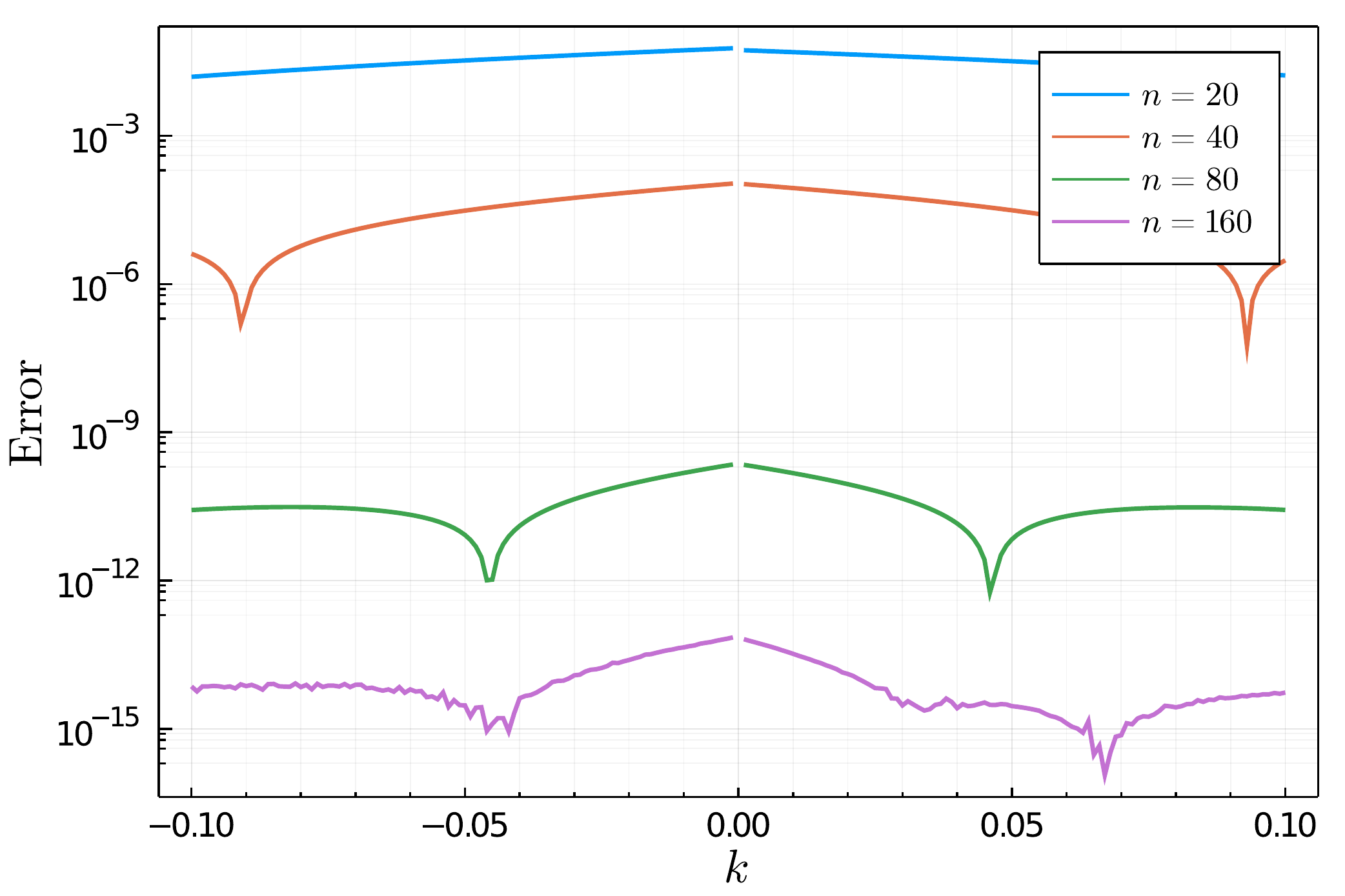}
         \caption{}
         \label{f:lag-rat-ft-error}
       \end{subfigure}
       \caption{(a) The error in computing the Fourier transform of \eqref{eq:rat-fun} using an $n \times n$ or $(n+1) \times (n+1)$ finite section of \eqref{eq:banded}.  As $n$ increases the errors decrease. (b) The error in computing the Fourier transform of \eqref{eq:rat-fun} using \eqref{eq:qx}. As $n$ increases the errors decrease. } \label{f:rat-error}
\end{figure}

\section{Scattering and inverse scattering}\label{sec:scattering}

Before we perform any numerical computations on the AKNS system we introduce the scattering and inverse scattering problems.

The process of \emph{scattering} amounts to computing the requisite quantities, as functions of the $k$ variable, such that the potentials $q$ and $r$ are uniquely, and conveniently, recovered from these quantities.  We will define two reflection coefficients $\rho_1,\rho_2$ and two collections $\{(z_j^\pm, c_j^\pm)\}_{j=1}^{n_\pm}$, \tcr{$n_\pm \in \mathbb N \cup\{0\}$} one for each choice of $\pm$ where
\begin{align*}
    \rho_j &: \mathbb R \to \mathbb C, \quad j = 1,2,\\
    z_j^\pm &\in \mathbb C^\pm, \quad j = 1,2,\ldots,n_\pm.
\end{align*}

Consider two solutions $\mu^\pm(x;k) = \mu^\pm(x;k,q,r) $ of \eqref{eq:AKNS}, normalized at $\pm \infty$ so that
\begin{align*}
    \lim_{x \to \pm \infty} \mu^\pm(x;k) \begin{bmatrix} \E^{\I k x} & 0 \\ 0 & \E^{- \I k x} \end{bmatrix} = I.
\end{align*}
For such solutions to exist we need to assume that $r,q$ are integrable \cite[Lemma 3.1]{TrogdonSOBook}.  We then examine the solutions column-by-column.  For $x < 0$, since the first column $\mu^-_1$ of $\mu^-$ satisfies $\mu_1^-(x;k) \sim \E^{-\I k x} \begin{bmatrix} 0 \\ 1 \end{bmatrix}$ and this exponent decays in the upper-half plane, it is reasonable to expect that for fixed $x$, $\mu^-_1(x;k)$ has an analytic extension to ${\mathbb C^+}$.  If we additionally impose that $q,r \in L^2(\mathbb R)$ it follows that for each fixed $x$
\begin{align*}
    \E^{\I k x} \mu_1^-(k;x)  - \begin{bmatrix} 0 \\ 1 \end{bmatrix},
\end{align*}
is in the range of the Cauchy integral \cite[Lemma 3.5]{TrogdonSOBook}, i.e.,
\begin{align*}
    \frac{1}{2\pi \I} \int_{-\infty}^\infty \frac{u_1^-(k';x)}{k' -k} \D k = \E^{\I k x} \mu_1^-(k;x)  - \begin{bmatrix} 0 \\ 1 \end{bmatrix},
\end{align*}
for a vector-valued function $u_1^-(\cdot;x) \in L^2(\mathbb R)$.   Similar considerations apply to the second column $\mu^-_2$ of $\mu^-$ but now for $\mathbb C^-$.  The implications for $\mu^+$ are reversed, with the first column extending into the lower-half plane and the second column extending into the upper-half plane.

\begin{definition}
The scattering matrix $S(k) = S(k;q,r)$ is given by
\begin{align*}
    S(k) = \lim_{x \to \infty} \begin{bmatrix} \E^{\I k x} & 0 \\ 0 & \E^{-\I k x} \end{bmatrix} \mu^-(x;k), \quad k \in \mathbb R.
\end{align*}
\end{definition}
It follows from this definition that
\begin{align}\label{eq:S}
    \mu^-(x;k) = \mu^+(x;k)S(k), \quad x,k \in \mathbb R.
\end{align}
We write
\begin{align*}
    S(k) = \begin{bmatrix} a(k) & B(k) \\ b(k) & A(k) \end{bmatrix}.
\end{align*}
Since the coefficient matrix in \eqref{eq:AKNS} is traceless, Liouville's formula implies $\det \mu^\pm = 1$ and therefore $\det S(k) = 1$, giving the relation
\begin{align*}
    a(k) A(k) - b(k) B(k) = 1.
\end{align*}
If we impose some relation between $r$ and $q$ we obtain additional conditions \tcr{and it explains why only $a(k)$ and $b(k)$ are specified in \eqref{eq:sechpot} below.}
\begin{proposition}
Suppose $r(x) = \lambda \bar q(x)$, $\lambda = \pm 1$.  Then
\begin{align*}
    \overline{A(\bar k)} = a(k), \quad \overline{B(\bar k)} = - \lambda b(k).
\end{align*}
\end{proposition}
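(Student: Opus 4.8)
The plan is to realize the hypothesis $r=\lambda\bar q$ as an intertwining of the AKNS coefficient matrix under complex conjugation, thereby reducing the two claimed identities to a single conjugation relation for the scattering matrix $S$. Write the coefficient matrix in \eqref{eq:AKNS} as $Q(x;k)=\begin{bmatrix}-\I k & q(x)\\ r(x) & \I k\end{bmatrix}$. If $\mu(x;k)$ solves \eqref{eq:AKNS}, then $\overline{\mu(x;\bar k)}$ solves $\frac{\D\psi}{\D x}=\overline{Q(x;\bar k)}\,\psi$, and since $\lambda$ is real, $r=\lambda\bar q$ gives $\overline{Q(x;\bar k)}=\begin{bmatrix}\I k & \bar q(x)\\ \lambda q(x) & -\I k\end{bmatrix}$. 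First I would look for a constant matrix $\sigma$ with $\sigma\,\overline{Q(x;\bar k)}\,\sigma^{-1}=Q(x;k)$. Because such a $\sigma$ must interchange the diagonal entries $\pm\I k$, it has to be anti-diagonal, and matching the off-diagonal entries then pins it down up to a scalar as $\sigma=\begin{bmatrix}0 & 1\\ \lambda & 0\end{bmatrix}$; here the identity $\lambda^{2}=1$ is precisely what makes the two off-diagonal requirements compatible.

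Next I would promote this into a symmetry of the normalized solutions $\mu^\pm$. The intertwining shows that $\sigma\,\overline{\mu^\pm(x;\bar k)}$ again solves \eqref{eq:AKNS} at parameter $k$, so it suffices to match the boundary normalization at $\pm\infty$. Writing $E(x;k)=\diag(\E^{\I k x},\E^{-\I k x})$, one has $\overline{E(x;\bar k)^{-1}}=E(x;k)$ for real $x$, while the anti-diagonal $\sigma$ interchanges $E(x;k)$ with $E(x;k)^{-1}$; together these show that $\sigma\,\overline{\mu^\pm(x;\bar k)}$ carries exactly the same asymptotics at $\pm\infty$ as $\mu^\pm(x;k)\,\sigma$. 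Invoking uniqueness of the normalized (Jost) solution---which follows from the Volterra integral-equation formulation used to construct $\mu^\pm$ under the standing integrability assumptions on $q,r$---then upgrades this to the exact identity $\sigma\,\overline{\mu^\pm(x;\bar k)}=\mu^\pm(x;k)\,\sigma$.

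Finally I would feed this into the scattering relation \eqref{eq:S}. Conjugating $\mu^-=\mu^+S$, reflecting $k\mapsto\bar k$, and applying the symmetry to both $\mu^+$ and $\mu^-$ allows one to cancel the common left factor $\mu^+(x;k)$ and arrive at the single relation $\overline{S(\bar k)}=\sigma^{-1}S(k)\,\sigma$. Expanding the anti-diagonal conjugation and reading off entries then yields the two stated identities: the diagonal entries give $\overline{A(\bar k)}=a(k)$, and the off-diagonal entries give the relation between $B$ and $b$, namely $\overline{B(\bar k)}=-\lambda\,b(k)$. Note that $a$ and $A$ continue analytically into $\mathbb C^\pm$, so the reflected argument $\bar k$ is meaningful there, whereas $b$ and $B$ are defined only on $\mathbb R$, where the second identity is read directly.

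The step I expect to be the main obstacle is the sign-and-factor bookkeeping: correctly determining $\sigma$ and, above all, extracting the correct off-diagonal factor $-\lambda$ from $\sigma^{-1}S(k)\,\sigma$. This is delicate because the determinant constraint $\det S=1$ is preserved by the conjugation relation for either choice of off-diagonal sign, so it cannot by itself fix that sign; the sign is pinned only by the explicit anti-diagonal form of $\sigma$ together with the normalization matching. The remaining points---that the $x\to\infty$ limit defining $S$ commutes with conjugation and that the normalized solutions are unique---are routine given the integrability of $q,r$ already assumed.
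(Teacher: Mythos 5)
Your strategy is exactly the paper's: the paper also proves this by conjugating with a constant anti-diagonal matrix (it writes $\check{\mu}(x;k) = \Sigma\,\overline{\mu(x;\bar k)}\,\Sigma$ with $\Sigma = \begin{bmatrix} 0 & \sigma^{-1} \\ \sigma & 0\end{bmatrix}$, $\sigma = \sqrt{-\lambda}$), invoking uniqueness of the normalized solutions and sending $x\to\infty$ to transfer the symmetry to $S(k)$. Your execution is in fact the more careful one, since you verify the intertwining at the level of the coefficient matrix, and that verification correctly forces the anti-diagonal entries to satisfy $t = \lambda s$, i.e.\ your $\sigma = \begin{bmatrix} 0 & 1 \\ \lambda & 0 \end{bmatrix}$. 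But your final step does not follow from your own algebra: with this $\sigma$ one has $\sigma^{-1} = \begin{bmatrix} 0 & \lambda \\ 1 & 0\end{bmatrix}$ (using $\lambda^{-1} = \lambda$), and
\begin{align*}
\sigma^{-1} S(k)\, \sigma = \begin{bmatrix} A(k) & \lambda\, b(k) \\ \lambda\, B(k) & a(k) \end{bmatrix},
\end{align*}
so $\overline{S(\bar k)} = \sigma^{-1}S(k)\sigma$ reads off as $\overline{A(\bar k)} = a(k)$ and $\overline{B(\bar k)} = +\lambda\, b(k)$. There is no mechanism anywhere in your derivation that produces the minus sign; your own stated worry about ``extracting the correct off-diagonal factor $-\lambda$'' was well founded, and simply asserting that factor at the end is the gap.

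The resolution is that the minus sign is wrong in the printed proposition, not in your framework. The paper's choice $\sigma = \sqrt{-\lambda}$ fails precisely the intertwining check you performed: with $\sigma^2 = -\lambda$, the conjugation $\Sigma\,\overline{Q(x;\bar k)}\,\Sigma$ returns the coefficient matrix with $(q,r)$ replaced by $(-q,-r)$, so $\check{\mu} \neq \mu$ and the uniqueness step there breaks; the intertwining holds only for $\sigma^2 = \lambda$, which reproduces your $+\lambda$. Independent checks confirm this: (i) to first order (Born approximation) $b(k) \approx \hat r(2k)$ and $B(k) \approx \hat q(-2k)$, so $r = \lambda \bar q$ gives $\overline{B(\bar k)} = \lambda\, b(k)$ directly; (ii) for $\lambda = -1$ (the focusing case, as in \eqref{eq:sechpot}) the relation $\overline{B(k)} = -b(k)$ on $\mathbb R$ combined with $aA - bB = 1$ yields $|a|^2 + |b|^2 = 1$, the correct focusing identity, whereas the printed $-\lambda$ would yield the defocusing identity $|a|^2 - |b|^2 = 1$; (iii) it is the $+\lambda$ version that produces, via $b_j^- = \lambda\, \overline{b_j^+}$, the paper's own statement $c_j^- = -\overline{c_j^+}$ for \eqref{eq:sechpot}. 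So keep your derivation exactly as planned, but state the conclusion it actually proves, $\overline{B(\bar k)} = \lambda\, b(k)$, and flag the sign discrepancy with the paper rather than forcing agreement with it.
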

\begin{proof}
Let $\sigma = \sqrt{-\lambda}$.  Then consider the function
\begin{align*}
     \tcr{\check{\mu}}(x;k) = \begin{bmatrix} 0 & \sigma^{-1} \\ \sigma & 0 \end{bmatrix} \overline{\mu(x;\bar k)} \begin{bmatrix} 0 & \sigma^{-1} \\ \sigma & 0 \end{bmatrix}.
\end{align*}
It then follows by uniqueness of solutions of \eqref{eq:AKNS} that $ \tcr{\check{\mu}} = \mu$.  Sending $x \to \infty$,
\begin{align*}
    S(k)= S(k;q,r)= \begin{bmatrix} 0 & \sigma^{-1} \\ \sigma & 0 \end{bmatrix} \overline{S(\bar k)} \begin{bmatrix} 0 & \sigma^{-1} \\ \sigma & 0 \end{bmatrix} = \begin{bmatrix} 
    \overline{A(\bar k)} & \overline{b(\bar k)} \sigma^{-2}\\
    \overline{B(\bar k)} \sigma^{2} & \overline{a(\bar k)}
    \end{bmatrix}.
\end{align*}
\end{proof}

From \eqref{eq:S} it follows
that 
\begin{align*}
    S(k) = (\mu^+(x;k))^{-1} \mu^-(x;k).
\end{align*}
Recall that the first column of $\mu^-$ can be extended analytically to the upper-half $k$-plane.  The same is true of the first row of $(\mu^+(x;k))^{-1}$ because $\det \mu^+ =1$, implying the same analyticity for $a(k)$.  By a similar argument, $A(k)$ can be analytically extended for $k \in \mathbb C^-$.  

Now, build the sectionally-meromorphic matrix-valued function $M: \mathbb C \setminus \mathbb R \to \mathbb C^{2 \times 2}$
\begin{align*}
    M(k) = M(k;x) = \begin{cases}
    \begin{bmatrix} 
    \frac{\mu_1^-(x;k)}{a(k)} & \mu_2^+(x;k) \end{bmatrix}\begin{bmatrix} \E^{\I k x} & 0 \\ 0 & \E^{-\I k x} \end{bmatrix} & \imag k > 0,\\ \\ 
    \begin{bmatrix} \mu_1^+(x;k) & \frac{\mu_2^-(x;k)}{A(k)} \end{bmatrix} \begin{bmatrix} \E^{\I k x} & 0 \\ 0 & \E^{-\I k x} \end{bmatrix} & \imag k < 0.\\
    \end{cases}
\end{align*}

Denote by $\{z_j^+\}_{j=1}^{n_+}$ (resp., $\{z_j^-\}_{j=1}^{n_-}$)  the zeros of $a(k)$ (resp., $A(k)$) in $\mathbb C^+$ (resp., $\mathbb C^-$). In the cases we consider numerically, these sets are finite and $A(k) \neq 0$, $a(k) \neq 0$ for $k \in \mathbb R$. 

We note that a zero of $a(k)$ at $k = z_j^+$ implies that $\mu_1^-(x;z_j^+) = b_j^+ \mu_2^+(x;z_j^+)$ for some constant $b_j^+$. Therefore
\begin{align*}
    \mathrm{Res}_{k = z_j^+} \frac{\mu_1^-(x;k)}{a(k)} = \frac{\mu_1^-(x;z_j^+)}{a'(z_j^+)} = \frac{b_j^+}{a'(z_j^+)}\mu_2^+(x;z_j^+)
\end{align*}
Similarly, for a constant $b_j^-$ satisfying $ \mu_2^-(x;z_j^-) = b_j^- \mu_1^+(x;z_j^-)$
\begin{align*}
    \mathrm{Res}_{k = z_j^-} \frac{\mu_2^-(x;k)}{A(k)} = \frac{\mu_2^-(x;z_j^-)}{A'(z_j^-)} = \frac{b_j^-}{A'(z_j^-)}\mu_1^+(x;z_j^+).
\end{align*}
This leads us to define
\begin{align*}
    c_j^+ = \frac{b_j^+}{a'(z_j^+)}, \quad c_j^- = \frac{b_j^-}{A'(z_j^-)}.
\end{align*}
Lastly, set $\rho_1(k) = b(k)/a(k)$ and $\rho_2(k) = B(k)/A(k)$.  It can then be established that $M$ solves the following Riemann--Hilbert problem.

\begin{RHproblem}\label{rhp:1}
Find $u(\cdot;x): \mathbb R \to \mathbb C^{2\times 2}$ and $u_j^\pm(x) \in \mathbb C^{2 \times 2}$,  $j = 1,2,\ldots,n_\pm$, such that
$$ M(k;x) = I + \frac{1}{2\pi \I} \int_{-\infty}^\infty \frac{u(k';x)}{k' -k} \D k' + \sum_{j=1}^{n_+} \frac{u_j^+(x)}{k - z_j^+} + \sum_{j=1}^{n_-} \frac{u_j^-(x)}{k - z_j^-},$$
\begin{align*}
    M^+(k;x) &= M^-(k;x) \begin{bmatrix} 1 - \rho_1(k) \rho_2(k) & - \rho_2(k) \E^{-2 \I k x} \\
    \rho_1(k) \E^{2 \I k x} & 1\end{bmatrix},\\
    M^\pm(k;x) &= \lim_{\epsilon \to 0^+} M(k \pm \I \epsilon),
\end{align*}
and
\begin{align*}
    \mathrm{Res}_{k = z_j^+} M(k;x) &= \lim_{k \to z_j^+} M(k;x) \begin{bmatrix} 0 & 0 \\
    c^+_j \E^{2 \I z_j^+ x} & 0 \end{bmatrix},\\
    \mathrm{Res}_{k = z_j^-} M(k;x) &= \lim_{k \to z_j^-} M(k;x) \begin{bmatrix} 0 & c_j^- \E^{-2 \I z_j^- x} \\
    0 & 0 \end{bmatrix}.
\end{align*}
\end{RHproblem}
Since $M$ is formed column-by-column using solutions of \eqref{eq:AKNS}, we can see that each column solves
\begin{align*}
    \frac{\D M}{\D x}(k;x) + \I k [\sigma_3, M(k;x)] &= \begin{bmatrix} 0 & q(x) \\ r(x) & 0 \end{bmatrix} M(k;x), ~~~\sigma_3 = \begin{bmatrix} 1 & 0 \\ 0 & -1 \end{bmatrix}.
\end{align*}
Taking the limit $k\to \infty$ in this formula, gives
\begin{align*}
    \lim_{|k| \to \infty} \I k [\sigma_3, M(k;x)] = \begin{bmatrix} 0 & q(x) \\ r(x) & 0 \end{bmatrix}.
\end{align*}

This Riemann--Hilbert problem is posed using the left scattering data --- $S(k)$ is defined by sending in data from $x = -\infty$ and recording how it proceeds through the potentials $r,q$.   This defines a left scattering map $\mathcal S$
\begin{align*}
    \mathcal S_{\mathrm l}(q,r) = \left(\rho_1,\rho_2, (z_j^+,c_j^+)_{j=1}^{n_+}, (z_j^-,c_j^-)_{j=1}^{n_-} \right).
\end{align*}
One can repeat this process to define the right scattering data by enforcing boundary conditions at $x = + \infty$.  In this setting the corresponding relation to \eqref{eq:S} will involve $S(k)^{-1}$.  But, for simplicity, we define the right scattering map by
\begin{align*}
    \mathcal S_{\mathrm r}(q,r) = \mathcal S_{\mathrm l}(r(-\cdot),q(-\cdot)),
\end{align*}
that is, $\mathcal S_{\mathrm r}$ is simply the left scattering map applied with $q(x),r(x)$ replaced with $r(-x),q(-x)$.

To see the implications of this definition, consider
\begin{align*}
     \tcr{\check{\mu}}(x;k) = \begin{bmatrix} 0 & -1 \\ 1 & 0 \end{bmatrix} \mu^+(-x;k,q,r) \begin{bmatrix} 0 & 1 \\ -1 & 0 \end{bmatrix}.
\end{align*}
It follows that
\begin{align*}
    \frac{\D  \tcr{\check{\mu}}}{\D x} (x;k) = \begin{bmatrix} -\I k & r(-x) \\ q(-x) & \I k \end{bmatrix}  \tcr{\check{\mu}}(x;k).
\end{align*}
And as $x\to -\infty$
\begin{align*}
     \tcr{\check{\mu}}(x;k) \sim \begin{bmatrix} 0 & -1 \\ 1 & 0 \end{bmatrix} \begin{bmatrix} \E^{\I k x} & 0 \\ 0 &  \E^{-\I k x} \end{bmatrix} \begin{bmatrix} 0 & 1 \\ -1 & 0 \end{bmatrix} =  \begin{bmatrix} \E^{-\I k x} & 0 \\ 0 &  \E^{\I k x} \end{bmatrix}.
\end{align*}
It follows that $ \tcr{\check{\mu}}(x;k) = \mu^-(x;k,r(-\cdot),q(-\cdot))$.  And then we can compute the behavior of $ \tcr{\check{\mu}}(x;k)$ as $x \to \infty$ using
\begin{align*}
    \begin{bmatrix} \E^{\I k x} & 0 \\ 0 &  \E^{-\I k x} \end{bmatrix}  \tcr{\check{\mu}}(x;k) = \begin{bmatrix} 0 & -1 \\ 1 & 0 \end{bmatrix} \begin{bmatrix} \E^{-\I k x} & 0 \\ 0 &  \E^{\I k x} \end{bmatrix} \mu^+(-x;k,q,r) \begin{bmatrix} 0 & 1 \\ -1 & 0 \end{bmatrix}.
\end{align*}
This implies that if $S(k,q,r) = \begin{bmatrix} a(k) & B(k) \\ b(k) & A(k) \end{bmatrix}$ then
\begin{align*}
    S&(k;r(-\cdot),q(-\cdot))\\
    &=\lim_{x \to + \infty}\begin{bmatrix} \E^{\I k x} & 0 \\ 0 &  \E^{-\I k x} \end{bmatrix}  \tcr{\check{\mu}}(x;k) = \begin{bmatrix} 0 & -1 \\ 1 & 0 \end{bmatrix} S(k;q,r)^{-1} \begin{bmatrix} 0 & 1 \\ -1 & 0 \end{bmatrix} \\
    &= \begin{bmatrix} a(k) & b(k) \\ B(k) & A(k) \end{bmatrix}.
\end{align*}
In this same notation, it follows that if
\begin{align*}
     \mathcal S_{\mathrm r}(q,r) = \left(\gamma_1,\gamma_2, (w_j^+,d_j^+)_{j=1}^{n_+}, (w_j^-,d_j^-)_{j=1}^{n_-} \right).
\end{align*}
and $S(k)$ is as in \eqref{eq:S} and $b_j^\pm$ are as above then
\begin{align*}
    \gamma_1(k) &=  \frac{B(k)}{a(k)}, \quad \gamma_2(k) = \frac{b(k)}{A(k)}, \quad w_j^\pm = z_j^\pm,\\
    d_j^+ &= -\frac{1}{b_j^+ a'(z_j^+)}, \quad d_j^- = -\frac{1}{b_j^-A'(z_j^-)}.
\end{align*}
This demonstrates that if one can compute, $S(k)$, $z_j^\pm$, $a'(z_j^+), A'(z_j^-)$ and $b_j^\pm$ for every $j$, then computing either scattering map is trivial.  The associated right-scattering data Riemann--Hilbert problem is
\begin{RHproblem}\label{rhp:2}
Find $v(\cdot;x): \mathbb R \to \mathbb C^{2\times 2}$, and $v_j^\pm(x) \in \mathbb C^{2\times 2}$, $j =1 ,2,\ldots,n_\pm$, such that
$$ N(k;x) = I + \frac{1}{2\pi \I} \int_{-\infty}^\infty \frac{v(k';x)}{k' -k} \D k' + \sum_{j=1}^{n_+} \frac{v_j^+(x)}{k - z_j^+} + \sum_{j=1}^{n_-} \frac{v_j^-(x)}{k - z_j^-},$$
\begin{align*}
    N^+(k;x) &= N^-(k;x) \begin{bmatrix} 1 - \gamma_1(k) \gamma_2(k) & - \gamma_2(k) \E^{2 \I k x} \\
    \gamma_1(k) \E^{-2 \I k x} & 1\end{bmatrix},\\
    N^\pm(k;x) &= \lim_{\epsilon \to 0^+} N(k \pm \I \epsilon;x),
\end{align*}
and
\begin{align*}
    \mathrm{Res}_{k = z_j^+} N(k;x) &= \lim_{k \to z_j^+} N(k;x) \begin{bmatrix} 0 & 0 \\
    d^+_j \E^{-2 \I z_j^+ x} & 0 \end{bmatrix},\\
    \mathrm{Res}_{k = z_j^-} N(k;x) &= \lim_{k \to z_j^-} N(k;x) \begin{bmatrix} 0 & d_j^- \E^{2 \I z_j^- x} \\
    0 & 0 \end{bmatrix}.
\end{align*}
\end{RHproblem}
The corresponding recovery formula for this RH problem is given by
\begin{align*}
    \lim_{|k| \to \infty} \I k [\sigma_3, N(k;x)] = \begin{bmatrix} 0 & r(x) \\ q(x) & 0 \end{bmatrix}.
\end{align*}
The importance of Riemann--Hilbert Problem~\ref{rhp:2} is that it allows the exchange of $x \to -x$ by selecting $\gamma_1,\gamma_2$, and $d_j^\pm$, $j = 1,2,\ldots,n_\pm$ appropriately and in Section~\ref{sec:inverse} we develop a method that is effective for $x \geq 0$ and this allows the method to immediately apply for $x < 0$ without further modification.

\begin{remark}
On the surface it may seem as the right scattering data cannot be obtained in terms of the left scattering data because all of the entries of $S(k)$ are not known individually.  But it is indeed possible to recover $S(k)$ from the scattering data. \tcr{For example, in case where $a(k)$ and $A(\bar k)$ do not vanish in the closed upper-half plane, the relation
\begin{align*}
    \frac{1}{a(k)A(k)} = 1 - \frac{b(k) B(k)}{a(k)B(k)} = 1 - \rho_1(k) \rho_2(k),
\end{align*}
demonstrates that 
\begin{align*}
    \psi(k) := \begin{cases} a(k)^{-1} & \imag k > 0,\\
    A(k)^{-1} & \imag k < 0, \end{cases}
\end{align*}
solves the Riemann--Hilbert problem
\begin{align*}
    \psi^+(k) \psi^-(k) = 1 - \rho_1(k) \rho_2(k), \quad \psi(\infty) = 1,
\end{align*}
where $\psi$ is analytic in $\mathbb C \setminus \mathbb R$. And the solution of this problem is given by
\begin{align*}
    \psi(k) = \exp\left( \frac{1}{2 \pi \I} \int_{-\infty}^{\infty} \frac{ \log \left(1 - \rho_1(k') \rho_2(k') \right)}{k' - k}  \right), \quad \imag k \neq 0.
\end{align*}
So the reflection coefficients are enough to recover $a(k)$ and $A(k)$ and then $b(k) = \rho_1(k) a(k)$, $B(k) = \rho_2(k)A(k)$.  Zeros of $a, A$ can be incorporated in a straightforward fashion.}
But we will not need this here, nor will we discuss computing  this recovery.
\end{remark}

\section{Numerical scattering}\label{sec:numerical_scattering}

We now discuss extending the method in Section~\ref{sec:ode} to the system \eqref{eq:AKNS} and compute the scattering maps $\mathcal S_{\mathrm l}$ and $\mathcal S_{\mathrm r}$.  We first transform it to a new system to encode boundary conditions more simply:
\begin{align*}
    \varphi(x;k,r,q) = \varphi(x;k) = \mu^-(x;k) \begin{bmatrix} \E^{\I k x} & 0 \\ 0 & \E^{-\I k x} \end{bmatrix} - I.
\end{align*}
The boundary conditions for $\varphi$ are $\varphi(-\infty;k) = \begin{bmatrix} 0 & 0 \\ 0 & 0 \end{bmatrix}$ and $\varphi$ solves
\begin{align*}
    \frac{\D\varphi}{\D x}(x;k) + \I k \left[ \sigma_3, \varphi(x;k)\right] - \begin{bmatrix} 0 & q(x) \\ r(x) & 0 \end{bmatrix} \varphi(x;k) = \begin{bmatrix} 0 & q(x) \\ r(x) & 0 \end{bmatrix}.
\end{align*}
This matrix-valued ODE can be solved column-by-column.  And the taking $k \mapsto -k$, $(q,r) \mapsto (r,q)$, and interchanging the rows of the solution maps one solution to the other.  So, it suffices to derive a general method for the first column:
\begin{align}\label{eq:varphi1}
    \frac{\D\varphi_1}{\D x}(x;k) - \begin{bmatrix} 0 & q(x) \\ r(x) & 2 \I k \end{bmatrix} \varphi_1(x;k) = \begin{bmatrix} 0  \\ r(x) \end{bmatrix}, \quad \varphi_1(-\infty;k) = \begin{bmatrix} 0 \\ 0 \end{bmatrix}.
\end{align}

Since multiplication by $q$ and $r$ are going to damage the banded structure encountered in Section~\ref{sec:ode}, we exchange $\mathfrak r$ for $\mathfrak g(x) = \E^{-x^2}$, as discussed in Section~\ref{sec:Gauss}.  Then applying the differential operator $\frac{\D}{\D x} - \I k$ to
\begin{align*}
    c_0 \phi(x;k,\mathfrak g) + \sum_{j \neq 0} c_j R_{j,0}(x),
\end{align*}
can be expressed in block form as
\begin{align*}
    \left[\begin{array}{c|c} \vec g & \mathcal D_\nu - \I k I \end{array}\right] \left[\begin{array}{c} c_0 \\\cline{1-1} c_1 \\ c_{-1} \\ c_2 \\ \vdots \end{array} \right].
\end{align*}
We use $\vec q, \vec r$ to denote the expansion coefficients of $q,r$, respectively, in the basis $\{R_{j,0}\}$.  The last piece we need is the coefficients in the expansion:
\begin{align}\label{eq:qphi}
    q(x) \phi(x;k,\mathfrak g) &= \sum_{j\neq 0} g_{q,k,j} R_{j,0}(x)
\end{align}
and we use the notation
\begin{align*}
    \vec{q\mathfrak g}(k) = \begin{bmatrix}
    g_{q,k,1} \\ g_{q,k,-1} \\ g_{q,k,2} \\ g_{q,k,-2} \\ \vdots
    \end{bmatrix}.
\end{align*}

\begin{remark}
One might be worried about the expansion \eqref{eq:qphi} and the decay rate of the coefficients $g_{q,k,j}$  as $|j|$.  But it turns out that $\phi(x;k,\mathfrak g)$ is a non-oscillatory function:  For fixed $k$, $\phi(x;k,\mathfrak g)$ oscillates as $x \to \infty$, but the amplitude of these oscillations decay exponentially as $|k| \to \infty$. This indicates that, depending on how the coefficients are computed, \eqref{eq:qphi} should converge uniformly with respect $k$.
\end{remark}

We alert the reader to the notation \eqref{eq:multmat} so that we can then summarize \eqref{eq:varphi1} in block-matrix form
\begin{align}\label{eq:scatteringsys}
    \left[\begin{array}{c|c|c|c} \vec g & \mathcal D_\nu & - \vec{q\mathfrak g}(2k) & - \mathcal M_{\mathcal I}(\vec q)  \\\cline{1-4}
     - \vec{q\mathfrak g}(0) & - \mathcal M_{\mathcal I}(\vec r)  & \vec g & \mathcal D_\nu - 2\I k I \end{array} \right] \left[\begin{array}{cc} u_0 \\\cline{1-1} u_{1} \\ u_{-1} \\ \vdots \\\cline{1-1} v_0 \\\cline{1-1} v_1 \\ v_{-1} \\ \vdots \end{array} \right] = \left[\begin{array}{cc}  0 \\ \cline{1-1} \vec r \end{array} \right], 
\end{align}
where
\begin{align*}
    \varphi_1(x;k) = \begin{bmatrix} u_0 \phi(x;0,\mathfrak g) \\ v_0 \phi(x;2k,\mathfrak g) \end{bmatrix} + \sum_{j\neq 0} R_{j,0}(x) \begin{bmatrix} u_j \\ v_j \end{bmatrix}.
\end{align*}
In practice, it should be beneficial to interlace the unknowns and the right-hand side vector --- to de-block the system --- and apply the adaptive QR algorithm.  But this has yet to be implemented in the current codes.  Instead, each block is truncated to have $m$ rows and the bi-infinite operators are truncated to have $m-1$ columns. The resulting system is solved by least-squares.  This methodology also allows us to compute the second column $\varphi_2$ of $\varphi$ as:
\begin{align*}
    \varphi_2(x;k) = \begin{bmatrix} w_0 \phi(x;-2k,\mathfrak g) \\ y_0 \phi(x;0,\mathfrak g) \end{bmatrix} + \sum_{j\neq 0} R_{j,0}(x) \begin{bmatrix} w_j \\ y_j \end{bmatrix}.
\end{align*}
Then examining the large $x$ behavior of $\varphi_1,\varphi_2$, the scattering matrix $S(k)$ is simply given by
\begin{align*}
    S(k) = \begin{bmatrix} 1 + u_0 \hat {\mathfrak g}(0) & w_0\hat {\mathfrak g}(-2k) \\ v_0\hat {\mathfrak g}(2k) & 1 + y_0\hat {\mathfrak g}(0) \end{bmatrix}.
\end{align*}
Note that here we have ignored errors in the truncation of the system to a finite one and these errors will be analyzed empirically.

\subsection{Computing $z_j^\pm$, $c_j^\pm$ and $d_j^\pm$}

Now that $S(k)$ can be constructed, we need to discuss a method for constructing the discrete scattering data.  One way to do this is to turn \eqref{eq:AKNS} into a eigenvalue equation.  We have
\begin{align}\label{eq:ode-evalprob}
    \left( \I \sigma_3  \frac{\D}{\D x}  - \I \sigma_3 \begin{bmatrix} 0 & q(x) \\ r(x) & 0 \end{bmatrix} \right) \mu(x;k) = k \mu(x;k).
\end{align}
And $a(z_j^+) = 0$ implies that the first column of $\mu^-(x;z_j^+)$ is an $L^2(\mathbb R)$ eigenfunction of the differential operator on the right-hand side for eigenvalue $k = z_j^+$.  Similarly, $A(z_j^-) = 0$  implies that the second column of $\mu^+(x;z_j^+)$ is an $L^2(\mathbb R)$ eigenfunction with eigenvalue $k = z_j^-$.  So, we discretize the operator as
\begin{align}\label{eq:evalprob}
    \left[\begin{array}{c|c} \I  \mathcal D_{\nu} & -\I \mathcal M_{\mathcal I}(q) \\\cline{1-2}
     \I \mathcal M_{\mathcal I}(r) & -\I \mathcal D_{\nu} \end{array} \right].
\end{align}
Replacing each operator with a square $n\times n$ finite-section truncation, we obtain a finite-dimensional eigenvalue problem, and for $n$ sufficiently large the eigenvalues off the real axis are expected to be good approximations of $z_j^\pm$.  To provide a check, and a refinement, we note that \cite[(3.16)]{TrogdonSOBook}
\begin{align*}
    a(k) = 1 + \frac{1}{2 \pi \I} \int_{-\infty}^\infty \frac{a(k') - 1}{k' - k} \D k.
\end{align*}
This implies
\begin{align}\label{eq:arep}
    a(k) = 1 + \sum_{j > 0} a_j R_{j,0}(k),
\end{align}
and both $a$ and its derivative $a'$ are easily computed in the entire upper-half plane.  For eigenvalues in the upper-half plane, approximated via truncations of \eqref{eq:evalprob}, this allows one to use a few steps of Newton's method to refine, if necessary.  Additionally,
\begin{align*}
    A(k) = 1 + \sum_{j < 0} A_j R_{j,0}(k).
\end{align*}
can be used to refine eigenvalues in the lower-half plane.

\begin{remark}
Note that \eqref{eq:arep} implies that $a(k) = 1 + O(1/k)$ as $k \to \infty$ and a rational basis, or at least a basis of functions that decay slowly at infinity, is required to represent $a(k)$ in the complex plane.  
\end{remark}

To complete the computation of the scattering data, we note that since $\phi(x;k,\mathfrak g)$ is an entire function of $k$, \eqref{eq:scatteringsys} can, in principle, be solved for complex $k$.  It also follows that
\begin{align*}
    \phi(-x;-k, - q(- \cdot), -r(-\cdot) ) = \mu^+(x;k,q,r) \begin{bmatrix} \E^{\I k x} & 0 \\ 0 & \E^{-\I k x} \end{bmatrix} - I
\end{align*}
so that when columns of $\mu^\pm$ are proportional, i.e., when $a(k), A(k) = 0$, then the constants of proportionality can be deduced by computing the individual solutions.  This allows one to compute $b_j^\pm$ once $z_j^\pm$ are known. The constants $c_j^\pm$ are then easily computed from $b_j^\pm$, $a'(z_j^+)$, $A'(z_j^-)$.

\subsection{Examples}

\subsubsection{Modulated $\mathrm{sech}$  potential}

The best point of comparison for the computation of the scattering maps for the AKNS system is the case discussed in \cite{Tovbis2004a}.  The authors show that the potential
\begin{align}\label{eq:sechpot}
    q(x) = - \I \tcr{\amp} \mathrm{sech}(x) \exp(-\I \gamma \tcr{\amp} \log \mathrm{cosh}(x)), \quad \tcr{\amp} > 0, \gamma \in \mathbb R,
\end{align}
gives rise to the scattering data
\begin{align*}
    a(k) &= \frac{\Gamma(w(k)) \Gamma(w(k) - w_- - w_+)}{\Gamma(w(k) - w_+) \Gamma(w(k) - w_-)},\\
    b(k) &= \I \tcr{\amp}^{-1} 2^{-\I \gamma \tcr{\amp}} \frac{\Gamma(w(k)) \Gamma(1 - w(k) + w_- + w_+)}{\Gamma(w_+) \Gamma(w_-)},\\
    w(k) &= -\I z - \tcr{\amp} \gamma \frac{\I}{2} + \frac 1 2, ~~ w_+ = -\I \tcr{\amp} \left( T + \frac{\gamma}{2} \right),\\
    w_- &= \I \tcr{\amp} \left( T - \frac{\gamma}{2} \right), ~~ T = \sqrt{\frac{\gamma^2}{4} -1}.
\end{align*}
in the case that $r(x) = -\overline{q(x)}$, which corresponds to the scattering problem associated with the focusing nonlinear Schr\"odinger equation. The zeros of $a(k)$ are given by
\begin{align}\label{eq:zjs}
    z_j^+ = \tcr{\amp} T - \I (j - 1/2), \quad j = 1,2,\ldots, n_+ := \lfloor 1/2 + \tcr{\amp} |T| \rfloor.
\end{align}
Also $b_j^+ = b(z_j^+)$ and $c_j^- = - \overline{c_j^+}$, $d_j^- = - \overline{d_j^+}$.

When $\tcr{\amp} = 1.65$ and $\gamma = 0.1$, we compare the computed functions $a(k), b(k), \rho_1(k)$ with their analytical expression in Figure~\ref{f:sech-error}.
\begin{figure}[tbp]
    \centering
    \begin{subfigure}[b]{.49\textwidth}
         \centering
         \includegraphics[width=\linewidth]{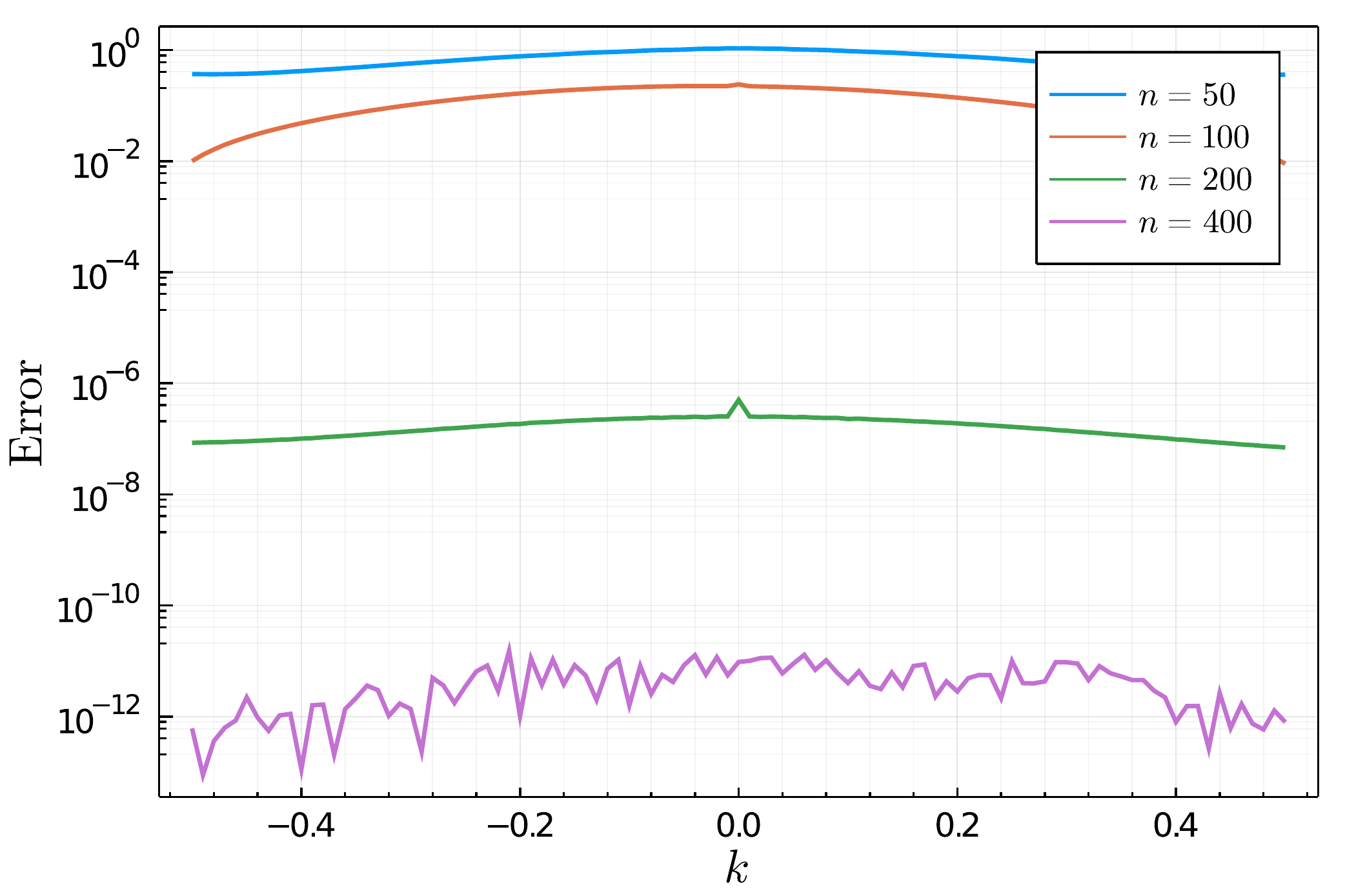}
         \caption{}
         \label{f:sech-a-error}
     \end{subfigure}
     \begin{subfigure}[b]{.49\textwidth}
         \centering
         \includegraphics[width=\linewidth]{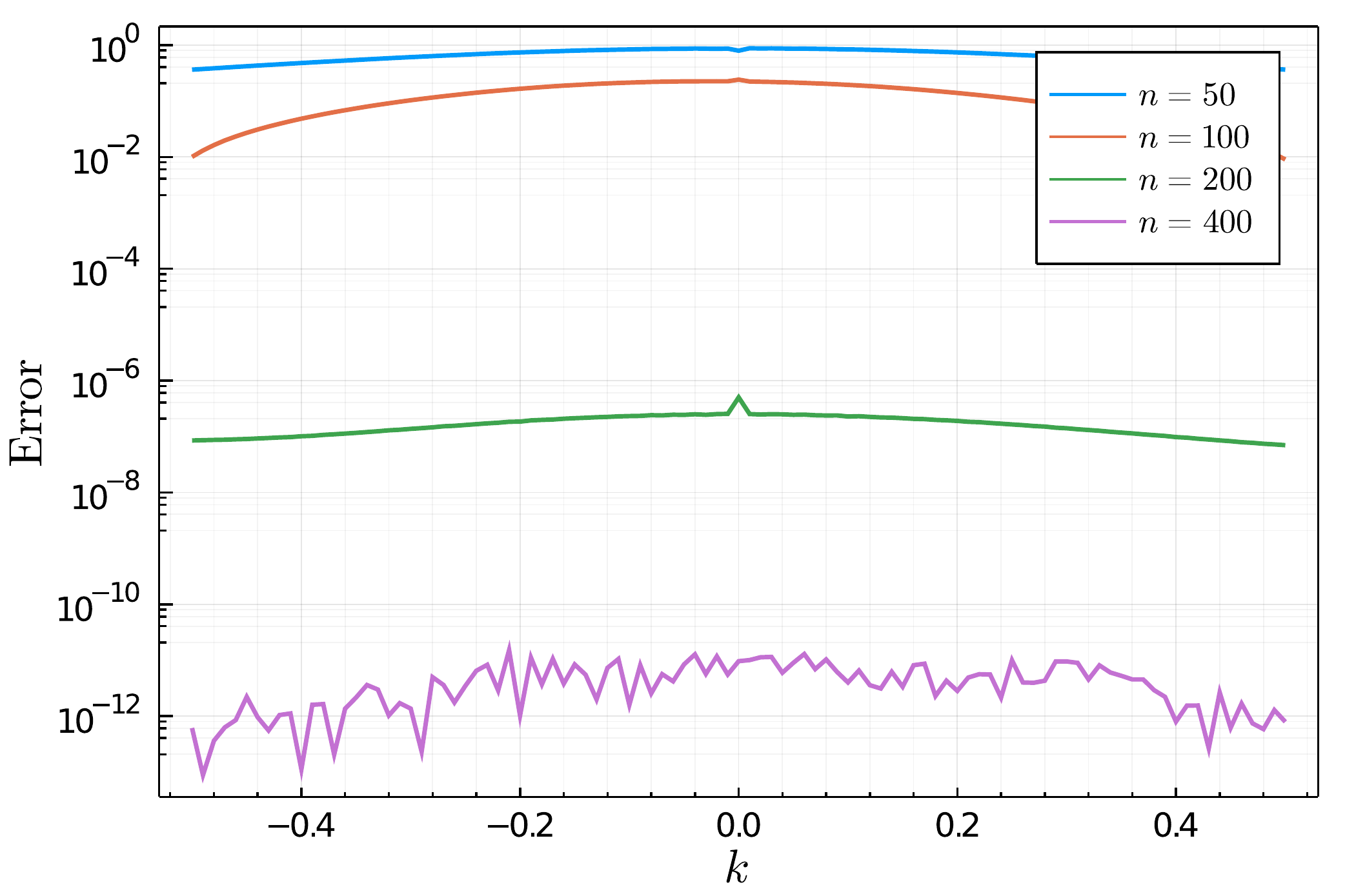}
         \caption{}
         \label{f:sech-b-error}
     \end{subfigure}
     \begin{subfigure}[b]{.49\textwidth}
         \centering
         \includegraphics[width=\linewidth]{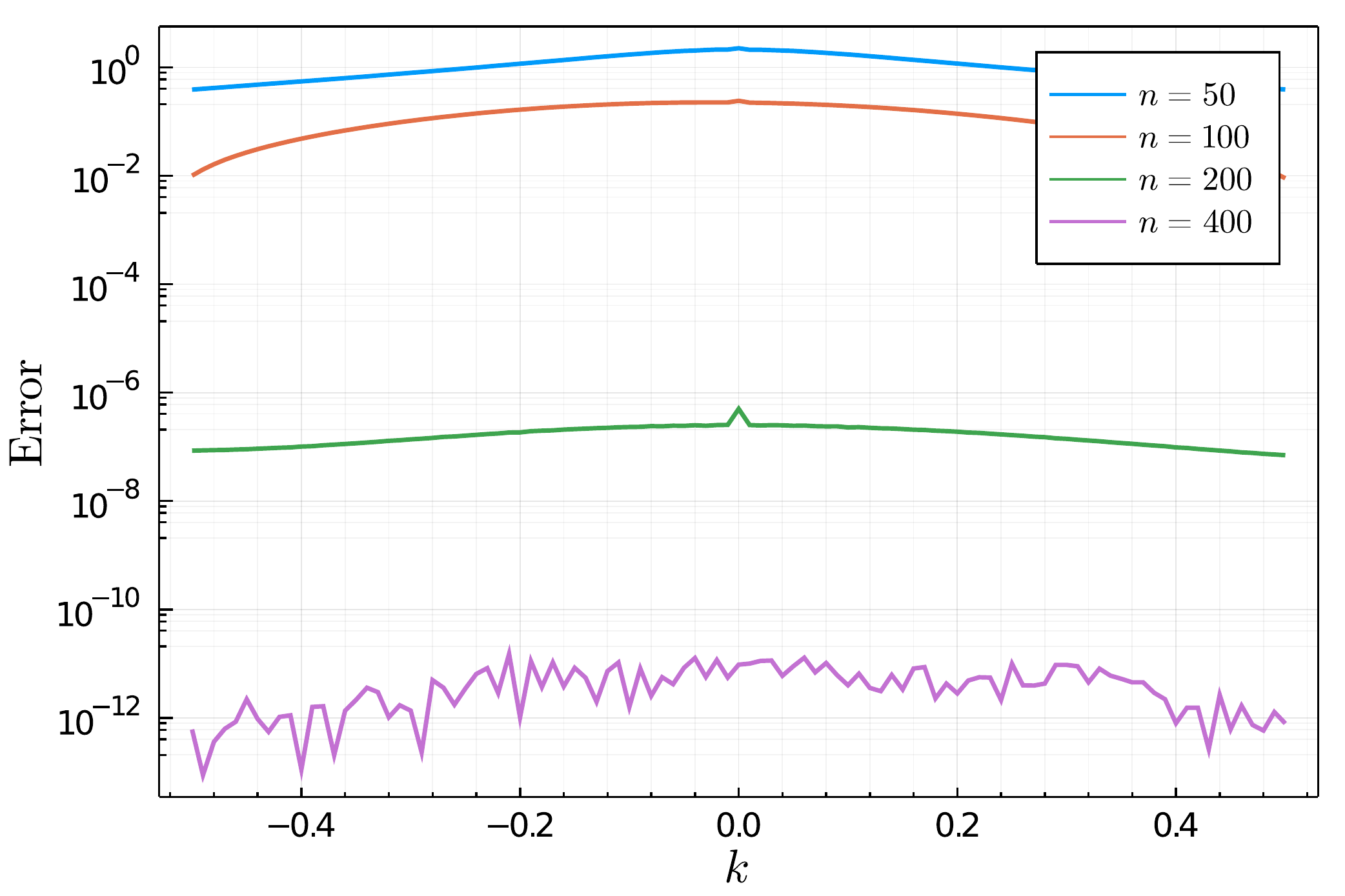}
         \caption{}
         \label{f:sech-reflection-error}
     \end{subfigure}
     \caption{The numerical verification of computing $a(k),b(k)$ and $\rho_1(k)$ for \eqref{eq:sechpot} with $\tcr{\amp} = 1.65$ and $\gamma = 0.1$ for various finite section dimensions $n$.  Errors decrease as $n$ increase in all cases. (a) The error in computing $a(k)$ with an $(2n + 200) \times 2n$ finite section of \eqref{eq:scatteringsys} as $n$ varies.    (b) The error in computing $b(k)$ with an $(2n + 200) \times 2n$ finite section of \eqref{eq:scatteringsys} as $n$ varies. (c) The error in computing $\rho_1(k)$ with an $(2n + 200) \times 2n$ finite section of \eqref{eq:scatteringsys} as $n$ varies. } \label{f:sech-error}
\end{figure}

\subsection{Gaussian data}
Consider the case
\begin{align}\label{eq:qr-gauss}
  \begin{split}
    q(x) &= \E^{-x^2},\\
    r(x) &= -2 \E^{-x^2 +\I x}.
  \end{split}
\end{align}
Here $r(x)$ and $q(x)$ share no obvious relation.  In Figure~\ref{f:Phi} we plot the four components of $\varphi(x;1)$ as functions of $x$.
\begin{figure}[tbp]
    \centering
    \includegraphics[width=\linewidth]{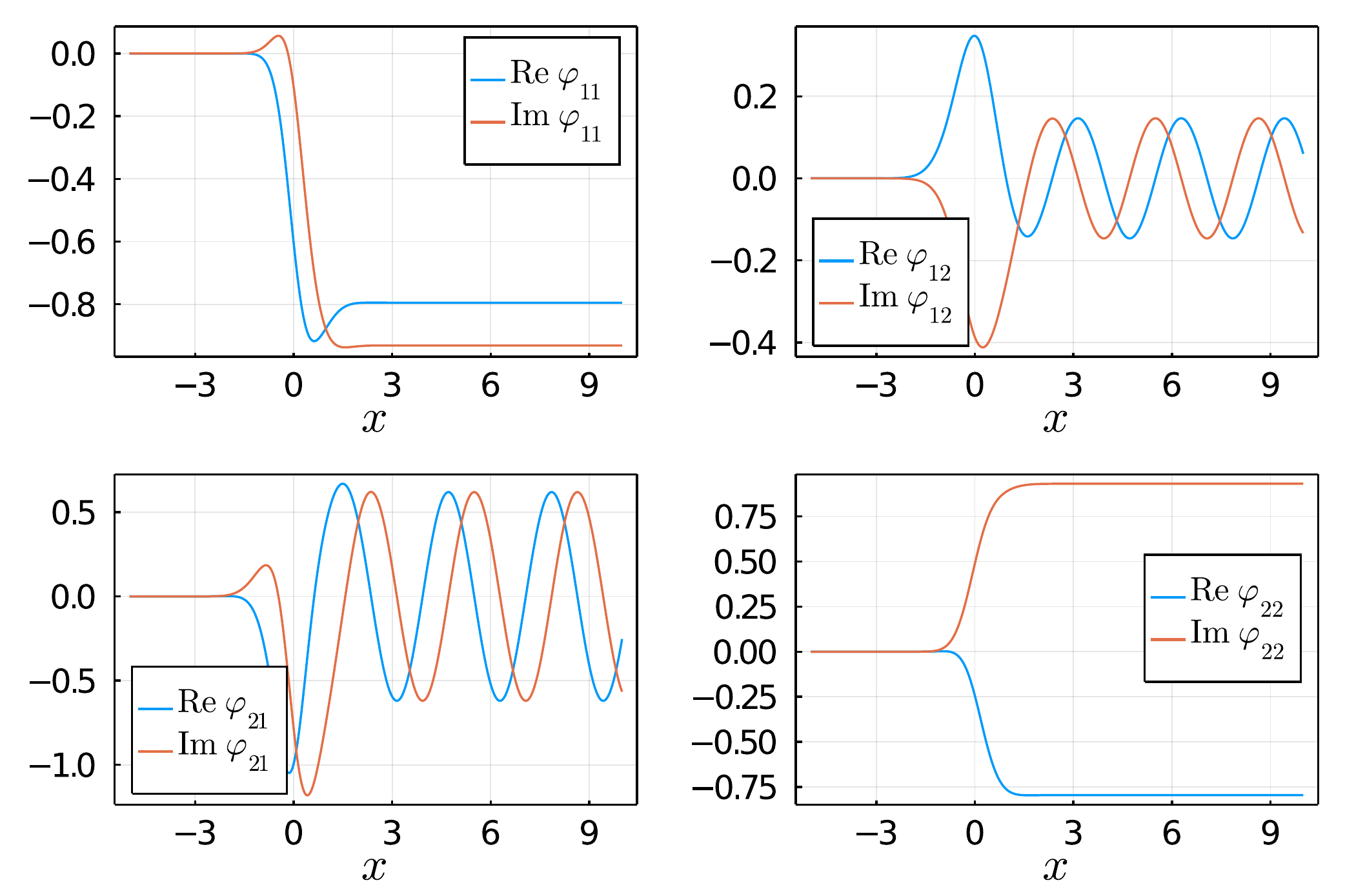}
    \caption{The four components of the numerially computed solution $\varphi(x;1)$ where $q,r$ are given by \eqref{eq:qr-gauss}.}
    \label{f:Phi}
\end{figure}
And in Figure~\ref{f:rho1rho2} we plot the four reflection coefficients $\rho_1,\rho_2,\gamma_1,\gamma_2$.
\begin{figure}[tbp]
    \centering
    \includegraphics[width=\linewidth]{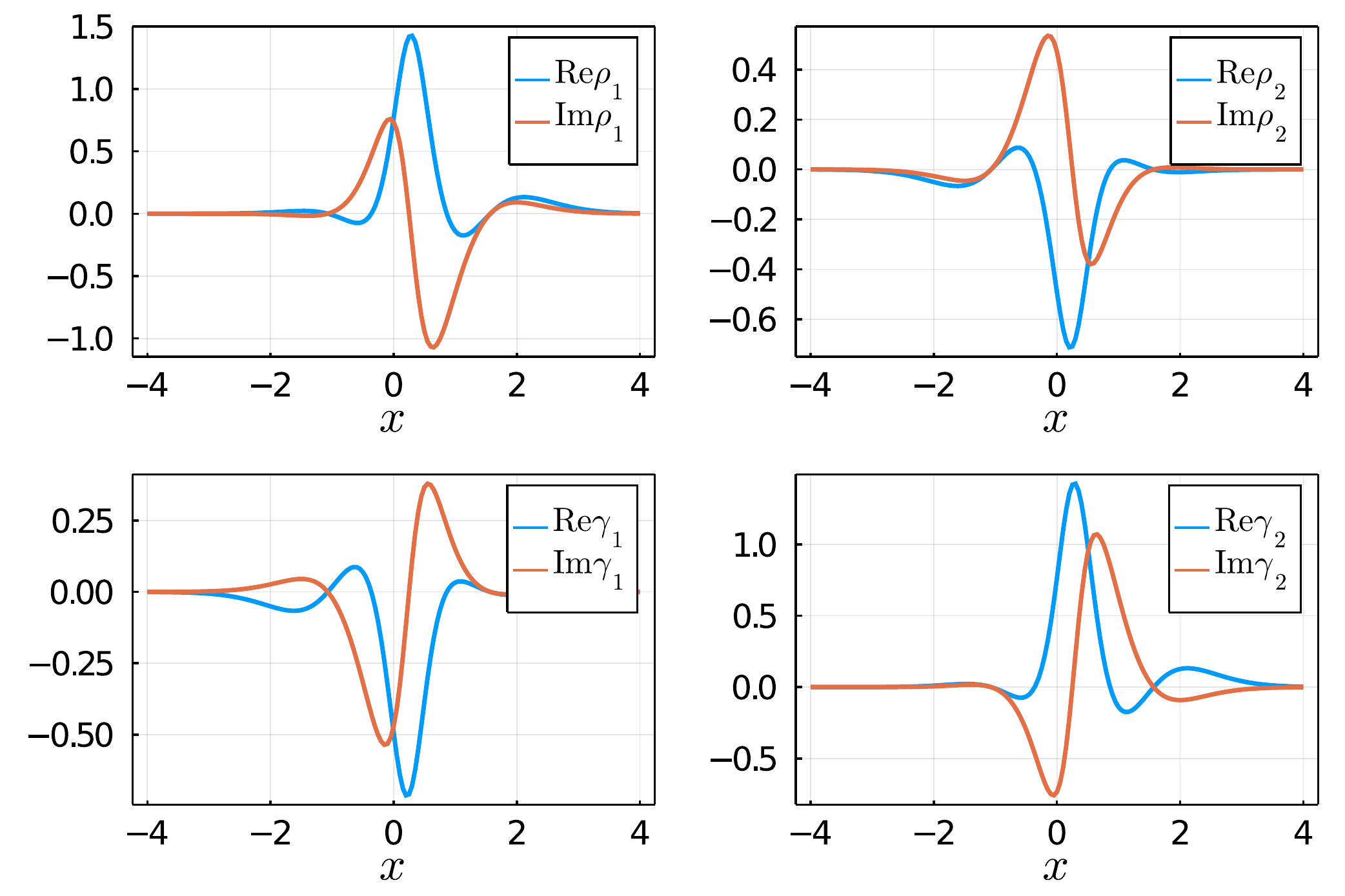}
    \caption{The numerially computed functions $\rho_1,\rho_2,\gamma_1,\gamma_2$ when $q,r$ are given by \eqref{eq:qr-gauss}.}
    \label{f:rho1rho2}
\end{figure}
For this data we find two eigenvalues 
\begin{align*}
    z_1^+ &\approx 0.25 + 0.517003899379\I,\\
    z_1^- &\approx 0.25 - 0.517003899379\I.
\end{align*}
And the associated norming constants are given by
\begin{align*}
    c_1^+ &\approx 0.4100036 - 1.6000283\I , \quad d_1^+ \approx 0.2050018 + 0.8000141\I,\\
    c_1^- &\approx0.2050018 - 0.8000141\I, \quad d_1^- \approx 0.4100036 + 1.6000283\I.
\end{align*}
\begin{remark}
In computing these quantities the parameter $\nu$ that is present in the basis functions appears to be critical to obtain accurate results.  Specifically, $\nu = 12$ produces very accurate approximations of $z_j^\pm$.  \tcr{The parameter $\nu$ is most effectively chosen so that it tracks the decay rate of the true eigenfunctions (the basis functions decay lower as $\nu$ increases).  But it is difficult to know \emph{a priori} how to choose $\nu$.}

Additionally, the choice of $n$ and $m$ for the truncation of \eqref{eq:scatteringsys} to a finite system are quite important.  For a given $n$, numerical experiments seem to indicate that the optimal value of $m$ depends on $k$.  This is something that the adaptive QR algorithm should be able to detect.  As this is not implemented in the current work, accuracies are often limited to $\approx 10^{-8}$ for anomalous values of $k$.  This is a shortcoming of this method that would need to be rectified for it to be competitive with that in \cite{TrogdonSONLS}, for example.  Patching this shortcoming and exploring the implications of varying $\nu$ are left for future work.  It is also a hope that this methodology can be used to compute scattering data with high-precision.  A possible way forward for this is discussed in more detail in Section~\ref{sec:conc}
\end{remark}

\section{Numerical inverse scattering}\label{sec:inverse}

Before we turn to the specifics of solving the inverse problem we review some facts stated in \cite{Trogdon2013} about the GMRES algorithm. GMRES \cite{GMRES-original} can be applied, in abstract form, to the linear operator equation on a Hilbert space $\mathbb H$
\begin{align*}
    \mathcal A u = f, \quad  \mathcal A : \mathbb H \to \mathbb H, \quad f \in \mathbb H,
\end{align*}
provided the following conditions hold:
\begin{enumerate}
\item There exists a collection of function $S = \{u_\alpha\}_{\alpha \in A} \subset \mathbb H$ such that if $v \in S$, then the expansion coefficients $\{c_\beta\}$,  $\mathcal Av = \sum_{\beta \in B} c_\beta u_\beta$, can be computed as a finite sum.
\item $f$ can be expressed as a finite linear combination of elements of $S$.
\item For $v,w \in S$, the inner product $\langle v, w\rangle$ is computable \tcr{exactly}.
\end{enumerate}
These are conditions required for the GMRES iterations to be performed \underline{exactly}.  Most of the time, approximations are built in, in the form of either replacing $f$ with an approximation, replacing the inner product with an approximation or approximating the action of the operator $\mathcal A$.

We first suppose that the scattering data does not include any discrete spectrum $z_j^\pm$.  To solve the inverse problem, i.e., to reconstruct $r,q$ from the scattering data we use the approach in \cite{Trogdon2013} where it was noted that the GMRES algorithm, using the functions $R_{j,\pm 2x}(k)$, applied to the singular integral equation
\begin{align} \label{eq:SIE}
    \mathcal C^+u(k)\begin{bmatrix} 1 & 0 \\ -\rho_1(k) \E^{2 \I k x} & 1 \end{bmatrix} - \mathcal C^-u(k)&\begin{bmatrix} 1 & -\rho_2(k) \E^{-2 \I k x}  \\ 0 & 1 \end{bmatrix} \\
    &= \begin{bmatrix} 0 & -\rho_2(k) \E^{-2 \I k x} \\ \rho_1(k) \E^{2 \I k x} & 0 \end{bmatrix}\notag
\end{align}
converges rapidly, provided that $x \geq 0$. The Cauchy operators $\mathcal C^\pm$ are defined in \eqref{eq:cauchy-def}.

To see how this integral equation is related to Riemann--Hilbert Problem~\ref{rhp:1}, we first note that
\begin{align*}
    \begin{bmatrix} 1 & -\rho_2(k) \E^{-2 \I k x}  \\ 0 & 1 \end{bmatrix}&\begin{bmatrix} 1 & 0 \\ \rho_1(k) \E^{2 \I k x} & 1 \end{bmatrix} \\ &= \begin{bmatrix} 1 - \rho_1(k) \rho_2(k) & - \rho_2(k) \E^{-2 \I k x} \\
    \rho_1(k) \E^{2 \I k x} & 1\end{bmatrix} =: J(k;x).
\end{align*}
And in supposing that we have no poles in the upper- or lower-half planes, the statement of Riemann--Hilbert Problem~\ref{rhp:1} allows us to write
\begin{align*}
    \mathcal C^+ u(k) &- \mathcal C^- u(k) J(k;x) = J(k;x)-I.
\end{align*}
Multiplying this equation on the right by $\begin{bmatrix} 1 & 0 \\ -\rho_1(k) \E^{2 \I k x} & 1 \end{bmatrix}$ gives the desired integral equation.  This factorization was used in \cite{deift-zhou:mkdv} to show that \eqref{eq:SIE} is uniquely solvable provided $\|\rho_1\|_\infty, \|\rho_2\|_\infty < 1$ and therefore Riemann--Hilbert Problem~\ref{rhp:1} has a unique solution (again, provided the discrete spectrum is empty).

The main improvement the current paper makes to the methodology in \cite{Trogdon2013} is that which is discussed in Appendix~\ref{sec:cauchy-app}, and specifically given in \eqref{eq:cauchy_better} and \eqref{eq:cauchy_better2}.  This approach allows for the Cauchy operators $\mathcal C^\pm$ to be applied to
\begin{align*}
    \sum_{|j| \leq m} c_j R_{j,\alpha}(k),
\end{align*}
resulting in
\begin{align*}
    \sum_{|j| \leq m} c^0_j R_{j,0}(k) + \sum_{|j| \leq m} c^\alpha_j R_{j,\alpha}(k),
\end{align*}
where the coefficients $c^0_j,c^\alpha_j$, $j = -m,\ldots,m$ can be computed in an $\alpha$-independent $O(m^2)$ operations, for any $\alpha \in \mathbb R.$

Once an approximation $u$ of the solution of \eqref{eq:SIE} is known, then the recovery formula is easily computed via
\begin{align*}
    \lim_{k \to \infty} k (M(k;z) - I) = - \frac{1}{2 \pi \I} \int_{-\infty}^\infty u(k) \D k,
\end{align*}
and this can be easily evaluated as in the case \eqref{eq:qx} provided that the approximation $u$ is given as finite sum of $R_{j,\alpha}(k)$ for various choices of $j$ and $\alpha$.

The role of Riemann--Hilbert Problem~\ref{rhp:2} is that it can be written in the form
\begin{align} \label{eq:SIE2}
    \mathcal C^+v(k)\begin{bmatrix} 1 & 0 \\ -\gamma_1(k) \E^{-2 \I k x} & 1 \end{bmatrix} - \mathcal C^-v(k)&\begin{bmatrix} 1 & -\gamma_2(k) \E^{2 \I k x}  \\ 0 & 1 \end{bmatrix} \\
    &= \begin{bmatrix} 0 & -\gamma_2(k) \E^{2 \I k x} \\ \gamma_1(k) \E^{-2 \I k x} & 0 \end{bmatrix}\notag,
\end{align}
giving a rapidly converging method for $x \leq 0$.

To get a sense of why these methods converge rapidly, we describe how one can determine that the first row of $u(k)$ is of the form
\begin{align}\label{eq:form}
    \begin{bmatrix} u_1(k) & u_2(k) \end{bmatrix},
\end{align}
where $u_1(k)\E^{-2 \I k x},u_2(k) \E^{2 \I k x}$ are non-oscillatory functions --- they are readily approximated in the basis $\{R_{j,0}\}$. Indeed, since $\mathcal C^+ - \mathcal C^- = I$ we write
\begin{align}\label{eq:first-row}
    \begin{bmatrix} u_1(k) & u_2(k) \end{bmatrix}  &-  \begin{bmatrix} - \rho_1(k) \E^{2 \I k x}  \mathcal C^+ u_2(k) &  \rho_2(k) \E^{-2 \I k x}  \mathcal C^- u_1(k) \end{bmatrix} \\
    &= \begin{bmatrix} 0 & -\rho_2(k) \E^{-2 \I k x} \end{bmatrix}.\notag
\end{align}
If we write $u_2(k) = v(k) \E^{-2 \I k x}$ where $v$ is non-oscillatory then Theorem~\ref{Theorem:CauchyAction} implies that $\mathcal C^+ u_2$ is non-oscillatory.  Similarly, if $u_1(k) = w(k) \E^{2 \I k x}$  where $w$ is non-oscillatory then $\mathcal C^- u_1$ is non-oscillatory.  Thus \eqref{eq:first-row} can be rewritten in terms of non-oscillatory functions:
\begin{align}\label{eq:first-row-mod}
    \begin{bmatrix} w(k) & v(k)  \end{bmatrix}  -  \begin{bmatrix} - \rho_1(k)  \mathcal C^+ u_2(k) &  \rho_2(k)  \mathcal C^- u_1(k) \end{bmatrix} = \begin{bmatrix} 0 & -\rho_2(k) \end{bmatrix}.
\end{align}
While this heuristic is indeed valid, it is a difficult aspect of the problem to track, especially when poles $z_j^\pm$ are accounted for.  The beauty of the method in \cite{Trogdon2013} is that all of this is detected automatically \tcr{because GMRES chooses the oscillatory factors (i.e., it chooses $\alpha$ in $R_{j,\alpha}$) that appear in the basis for the user and, through orthogonality, it removes the oscillatory factors that are not necessary to represent the solution.}

\subsection{Accounting for poles}
\newcommand{\disc}{\mathrm{d}}
We demonstrate how poles are accounted for in the solution of Riemann--Hilbert Problem~\ref{rhp:1}.  The approach is easily adapted to Riemann--Hilbert Problem~\ref{rhp:2}.  If we suppose there is no reflection, i.e., $\rho_1 = \rho_2 = 0$, then we have the following.
\begin{RHproblem}\label{rhp:3}
Find $u_j^\pm(x) \in \mathbb C^{2 \times 2}$, $j = 1,2,\ldots,n_\pm$, such that $$ M_{\disc}(k;x) = I + \sum_{j=1}^{n_+} \frac{u_j^+(x)}{k - z_j^+} + \sum_{j=1}^{n_-} \frac{u_j^-(x)}{k - z_j^-},$$ satisfies
\begin{align*}
    \mathrm{Res}_{k = z_j^+} M_{\disc}(k;x) &= \lim_{k \to z_j^+} M_{\disc}(k;x) \begin{bmatrix} 0 & 0 \\
    c^+_j \E^{2 \I z_j^+ x} & 0 \end{bmatrix},\\
    \mathrm{Res}_{k = z_j^-} M_{\disc}(k;x) &= \lim_{k \to z_j^-} M_{\disc}(k;x) \begin{bmatrix} 0 & c_j^- \E^{-2 \I z_j^- x} \\
    0 & 0 \end{bmatrix}.
\end{align*}
\end{RHproblem}
The matrices $u_j^\pm(x) \in \mathbb C^{2\times 2}$ can be obtained row-by-row. The residue conditions imply that
\begin{align*}
    u_j^+(x) = \begin{bmatrix}  u_{j,1}^+(x) & 0 \\
    u_{j,2}^+(x) & 0 \end{bmatrix}, \quad u_j^-(x) = \begin{bmatrix}  0 & u_{j,1}^-(x) \\ 
    0 & u_{j,2}^-(x)\end{bmatrix}.
\end{align*}
Consider the matrix $Z$ defined entry-wise by
\begin{align*}
    Z_{jk} =  \frac{1}{z_j^+ - z_k^-}.
\end{align*}
Then construct the diagonal matrices
\begin{align*}
C_+ &= \mathrm{diag}(\vec{c}_+), \quad C_- = \mathrm{diag}(\vec{c}_-),\\
    \vec{c}_+ &= \begin{bmatrix} c_1^+\E^{2 \I z_1^+ x} \\ \vdots \\ c_{n^+}^+\E^{2 \I z_{n^+}^+ x}\end{bmatrix}, \quad \vec{c}_- =  \begin{bmatrix} c_1^-\E^{-2 \I z_1^- x}\\ \vdots \\ c^-_{n^-}\E^{-2 \I z_{n_-}^- x} \end{bmatrix}.
\end{align*}
The residue conditions imply
\begin{align*}
    \left[ \begin{array}{c|c} I & -C_+ Z\\\cline{1-2}
    C_-  Z^T & I \end{array}\right] \left[ \begin{array}{c|c} u_{1,1}^+(x) &  u_{1,2}^+(x)\\ \vdots & \vdots \\ u_{n^+,1}^+(x) & u_{n^+,2}^+(x)\\\cline{1-2}  u_{1,1}^-(x) & u_{1,2}^-(x)\\ \vdots & \vdots \\  u_{n^-,1}^-(x) & u_{n^-,2}^-(x) \end{array} \right] = \left[ \begin{array}{c|c} 0 & {\vec c}_+ \\\cline{1-2} 
    {\vec c}_- & 0 \end{array} \right].
\end{align*}
This reduces solving for $M_{\disc}(k;x)$ to pure linear algebra.  And if we only use this for $x \geq 0$, all the exponents involving $x$ that we encounter are decaying as $x$ increases. 

Now define $M_0(k;x) = M(k;x) M_{\disc}(k;x)^{-1}$.  It then follows that $M_0(k;x)$ solves the following Riemann--Hilbert problem without residue conditions.
\begin{RHproblem}\label{rhp:4}
Find $u_0(\cdot;x): \mathbb R \to \mathbb C^{2\times 2}$, such that $$ M_0(k;x) = I + \frac{1}{2\pi \I} \int_{-\infty}^\infty \frac{u_0(k';x)}{k' -k} \D k',$$
\begin{align*}
    M_0^+(k;x) &= M_0^-(k;x) M_{\disc}(k;x)\begin{bmatrix} 1 - \rho_1(k) \rho_2(k) & - \rho_2(k) \E^{-2 \I k x} \\
    \rho_1(k) \E^{2 \I k x} & 1\end{bmatrix}M_{\disc}(k;x)^{-1},\\
    M_0^\pm(k;x) &= \lim_{\epsilon \to 0^+} M_0(k \pm \I \epsilon;x).
\end{align*}
\end{RHproblem}
The associated recovery formula is
\begin{align*}
    \lim_{|k| \to \infty} \I k [\sigma_3, M_0(k;x) + M_{\disc}(k;x)] = \begin{bmatrix} 0 & r(x) \\ q(x) & 0 \end{bmatrix}.
\end{align*}

This problem presents a challenge because the analysis that leads to the conclusion \eqref{eq:first-row-mod} is more more involved.  We use the factorization
\begin{align*}
    M_{\disc}(k;x)&\begin{bmatrix} 1 - \rho_1(k) \rho_2(k) & - \rho_2(k) \E^{-2 \I k x} \\
    \rho_1(k) \E^{2 \I k x} & 1\end{bmatrix}M_{\disc}(k;x)^{-1} \\
    &= U_{\disc}(k;x) L_{\disc}(k;x)^{-1}.
\end{align*}
where
\begin{align*}
    U_{\disc}(k;x) &= M_{\disc}(k;x)\begin{bmatrix} 1  & - \rho_2(k) \E^{-2 \I k x} \\
    0 & 1\end{bmatrix}M_{\disc}(k;x)^{-1},\\
    L_{\disc}(k;x) &= M_{\disc}(k;x)\begin{bmatrix} 1  & 0 \\
    -\rho_1(k) \E^{2 \I k x} & 1\end{bmatrix}M_{\disc}(k;x)^{-1}.
\end{align*}
The singular integral equation for $u_0$ now takes the form
\begin{align*}
    \mathcal C^+ u_0(k) L_{\disc}(k;x) - \mathcal C^{+} u_0(k) U_{\disc}(k;x) = L_{\disc}(k;x) - U_{\disc}(k;x).
\end{align*}
Numerical experiments indicate that the while first row of $u_0$ is not of the form \eqref{eq:form}, it is of the form
\begin{align*}
    \begin{bmatrix} u(k) \E^{-2\I k x} + v(k) \E^{2\I k x} & w(k) \E^{-2\I k x} + z(k) \E^{2\I k x} \end{bmatrix},
\end{align*}
for non-oscillatory functions $u,v,w,z$.  It is not \emph{a priori} clear other exponents such as $\E^{\pm 4\I k x},\E^{\pm 8\I k x}$ should not appear in the solution!  Nevertheless, the algorithm proposed computes an accurate approximate solution of this form where the orthogonality imposed within the GMRES iteration eliminates other exponentials.  But also this seems to indicate that the decomposition $U_{\disc}(k;x) L_{\disc}(k;x)^{-1}$ could be modified to increase efficiency.  

Lastly, we restate a result from \cite{Trogdon2013} that provides a mechanism to perform error analysis.  It is especially important because, in general, the numerical method will not produce a solution that is integrable, but principle value integrals will always exist.
\begin{theorem}\label{t:error}
Let $\mathcal S(\rho_1,\rho_2)$ be a singular integral operator on the left-hand side of \eqref{eq:SIE} and suppose it is invertible on $L^2(\mathbb R)$.  For $0 < \epsilon < 1/\|\mathcal S^{-1}(\rho_1,\rho_2)\|$ suppose $\max_{j=1,2}\|\rho_j - \tilde \rho_j \|_{L^2 \cap L^\infty (\mathbb R)}< \epsilon$ then
\begin{align*}
\|\mathcal S(\rho_1,\rho_2)^{-1} - \mathcal S(\tilde \rho_1,\tilde \rho_2)^{-1}\| \leq \epsilon \frac{\|\mathcal S(\rho_1,\rho_2)^{-1}\|^2}{1 - \epsilon \|\mathcal S(\rho_1,\rho_2)^{-1}\|} =: C(\epsilon).
\end{align*}
\begin{itemize}
\item Let $\tilde u$ be the solution of singular integral equation with $\rho_1,\rho_2$ replaced with $\tilde \rho_1,\tilde \rho_2$. Then 
\begin{align*}
\|u -\tilde u\|_{L^2(\mathbb R)} \leq C(\epsilon) \max_{j=1,2}\|\rho_{j}\|_{L^2(\mathbb R)} + \epsilon \|\mathcal S(\rho_1,\rho_2)\| =: B(\epsilon).
\end{align*}
\item If, in addition, 
\begin{align*}
 \max_{j=1,2}\left|\mathrm{p.v.} \int \left( \rho_j(k) - \tilde \rho_j(k) \right) \D k \right| < \epsilon
\end{align*}
then
\begin{align*}
\left|\mathrm{p.v.} \int \left( u(k) - \tilde u(k) \right) \D k \right| \leq 2 \epsilon + \epsilon \|u\|_{L^2(\mathbb R)} + 2 B(\epsilon) \max_{j=1,2}\|\rho_j\|_{L^2(\mathbb R)}.
\end{align*}
\end{itemize}
\end{theorem}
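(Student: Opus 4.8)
The plan is to treat the three assertions as successive consequences of a single operator-perturbation estimate, so the first task is to control the difference of operators $\Delta := \mathcal{S}(\rho_1,\rho_2) - \mathcal{S}(\tilde\rho_1,\tilde\rho_2)$ together with the difference of right-hand sides $f - \tilde f$, where $f$ denotes the matrix on the right of \eqref{eq:SIE}. Writing $\delta_j := \rho_j - \tilde\rho_j$ and inspecting \eqref{eq:SIE}, every occurrence of $\rho_j$ enters through multiplication by $\rho_j(k)\E^{\pm 2\I k x}$ acting after a Cauchy projection; hence $\Delta u = \mathcal{C}^+ u\,N_1 - \mathcal{C}^- u\,N_2$, where $N_1,N_2$ are nilpotent matrices whose only nonzero entries are $\delta_1\E^{2\I kx}$ and $\delta_2\E^{-2\I kx}$. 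Since $\|\mathcal{C}^\pm\|_{L^2\to L^2}\le 1$ and $|\E^{\pm 2\I kx}| = 1$ on $\mathbb{R}$, multiplication contributes exactly the sup-norm of its symbol, so $\|\Delta\|_{L^2\to L^2}\le \max_j\|\delta_j\|_{L^\infty}<\epsilon$; the same inspection gives $\|f-\tilde f\|_{L^2}\le \max_j\|\delta_j\|_{L^2}<\epsilon$ and $\|f\|_{L^2}\le \max_j\|\rho_j\|_{L^2}$. This is where the two-norm hypothesis $\|\rho_j-\tilde\rho_j\|_{L^2\cap L^\infty}<\epsilon$ is used: the $L^\infty$ part bounds the operator and the $L^2$ part bounds the data.

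For the first bullet I would use the second resolvent identity $\tilde{\mathcal{S}}^{-1} - \mathcal{S}^{-1} = \tilde{\mathcal{S}}^{-1}\Delta\,\mathcal{S}^{-1}$. Since $\epsilon\|\mathcal{S}^{-1}\|<1$ by hypothesis, the Neumann series for $(I-\mathcal{S}^{-1}\Delta)^{-1}$ converges and yields $\|\tilde{\mathcal{S}}^{-1}\|\le \|\mathcal{S}^{-1}\|/(1-\epsilon\|\mathcal{S}^{-1}\|)$; substituting into the resolvent identity and using $\|\Delta\|<\epsilon$ gives precisely $C(\epsilon)$. For the second bullet I would split
\begin{align*}
u-\tilde u=(\mathcal{S}^{-1}-\tilde{\mathcal{S}}^{-1})f+\tilde{\mathcal{S}}^{-1}(f-\tilde f),
\end{align*}
bound the first summand by $C(\epsilon)\|f\|_{L^2}\le C(\epsilon)\max_j\|\rho_j\|_{L^2}$ using the first bullet, and bound the second by $\|\tilde{\mathcal{S}}^{-1}\|\,\|f-\tilde f\|_{L^2}\le \epsilon\|\tilde{\mathcal{S}}^{-1}\|$; these two contributions together comprise $B(\epsilon)$.

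The genuinely delicate assertion is the third, because $u$ and $\tilde u$ need not lie in $L^1(\mathbb{R})$ and only principal-value integrals are available. Here I would not integrate the operator form but rather the row-reduced form \eqref{eq:first-row}, which expresses each entry of $u$ as a reflection coefficient times a Cauchy transform of the solution, plus the pure data entry $-\rho_2\E^{-2\I kx}$. Subtracting the corresponding identity for $\tilde u$ and adding and subtracting, each entry of $w:=u-\tilde u$ becomes a sum of three kinds of terms: bilinear terms $\tilde\rho_j\,\mathcal{C}^\pm w_{j'}$, bilinear terms $\delta_j\,\mathcal{C}^\pm u_{j'}$, and a single pure term $-\delta_j\E^{\mp 2\I kx}$. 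The two families of bilinear terms are products of two $L^2$ functions, hence honestly in $L^1$, so Cauchy--Schwarz together with $\|\mathcal{C}^\pm\|\le 1$ controls their integrals by $\max_j\|\rho_j\|_{L^2}\,\|w\|_{L^2}\le B(\epsilon)\max_j\|\rho_j\|_{L^2}$ and by $\|\delta_j\|_{L^2}\|u\|_{L^2}\le \epsilon\|u\|_{L^2}$ respectively; choosing the splitting so the solution factor is $u$ rather than $\tilde u$ avoids an extra term. The only contribution requiring the principal value is the pure term, whose p.v. integral is bounded by the additional hypothesis $|\mathrm{p.v.}\int(\rho_j-\tilde\rho_j)|<\epsilon$. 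Summing over the two relevant components then produces $2\epsilon+\epsilon\|u\|_{L^2}+2B(\epsilon)\max_j\|\rho_j\|_{L^2}$.

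The main obstacle I anticipate is exactly this last step: justifying that the principal-value integral of $w$ exists and genuinely decomposes as above, in particular that the bilinear pieces are absolutely integrable while only the single data term is conditionally convergent, and handling the unimodular factors $\E^{\pm 2\I kx}$ so that the scalar hypothesis on $\mathrm{p.v.}\int(\rho_j-\tilde\rho_j)$ suffices. The non-oscillatory reduction recorded in \eqref{eq:first-row-mod} is precisely the device that makes this bookkeeping tractable, and it is what I would lean on to control the conditionally convergent term uniformly.
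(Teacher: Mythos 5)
The paper contains no proof of Theorem~\ref{t:error}: it is explicitly restated from \cite{Trogdon2013}, so your attempt can only be compared with the cited argument, and your route is the natural one for every part of it --- second resolvent identity $\tilde{\mathcal S}^{-1}-\mathcal S^{-1}=\tilde{\mathcal S}^{-1}\Delta\,\mathcal S^{-1}$ plus a Neumann series for $C(\epsilon)$; the splitting $u-\tilde u=(\mathcal S^{-1}-\tilde{\mathcal S}^{-1})f+\tilde{\mathcal S}^{-1}(f-\tilde f)$ for $B(\epsilon)$; and, for the principal-value bullet, the row-reduced identity \eqref{eq:first-row}, under which the bilinear pieces are products of $L^2$ functions (hence absolutely integrable, by Cauchy--Schwarz) and only the pure data term $-\delta_2(k)\E^{-2\I kx}$ is merely conditionally convergent. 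Your observation that $\|\Delta\|\leq\max_j\|\delta_j\|_{L^\infty}$ rather than the sum is correct, but it deserves a line of justification: it holds because the two perturbations $(\mathcal C^+u)(L-\tilde L)$ and $(\mathcal C^-u)(U-\tilde U)$ occupy disjoint columns of the output, not merely because each factor has norm at most $\epsilon$.

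Two steps do not close as written. First, your bound for the second contribution to $B(\epsilon)$ is $\epsilon\|\tilde{\mathcal S}^{-1}\|\leq\epsilon\|\mathcal S^{-1}\|/(1-\epsilon\|\mathcal S^{-1}\|)$, and no rearrangement of the splitting produces the \emph{forward} operator norm $\|\mathcal S(\rho_1,\rho_2)\|$ that the statement asserts --- every variant yields an inverse norm. Almost certainly the statement carries a typo ($\mathcal S$ for $\mathcal S^{-1}$; note the statement already writes $\mathcal S^{-1}(\rho_1,\rho_2)$ and $\mathcal S(\rho_1,\rho_2)^{-1}$ interchangeably), but then you should say so rather than assert that your two contributions ``comprise $B(\epsilon)$,'' which is false as literally claimed. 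Second, in the last bullet the pure terms are $\delta_2(k)\E^{-2\I kx}$ and $\delta_1(k)\E^{2\I kx}$, whereas the hypothesis controls $\mathrm{p.v.}\int\delta_j(k)\,\D k$ with no phase; for general $x$ the hypothesis therefore does not bound these terms, and you name this obstacle without resolving it. The repair is to read the hypothesis with the oscillatory factors included (i.e., a bound on the principal-value Fourier transform of $\delta_j$, uniform in $x$, which is how the non-oscillatory reduction \eqref{eq:first-row-mod} is meant to be exploited), or to state the conclusion at the relevant fixed $x$. A smaller bookkeeping point: with your preferred splitting the $\mathcal C^\pm w$ terms carry $\|\tilde\rho_j\|_{L^2}\leq\max_j\|\rho_j\|_{L^2}+\epsilon$, so you actually obtain $2(\max_j\|\rho_j\|_{L^2}+\epsilon)B(\epsilon)$, not $2B(\epsilon)\max_j\|\rho_j\|_{L^2}$; matching the stated constant requires the opposite splitting (paying $\|\tilde u\|$ instead of $\|u\|$) or absorbing the $\epsilon$, and this should be acknowledged rather than elided.
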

This theorem can be extended to include $M_{\disc}$ but we leave a full error analysis for future work.

\subsection{Examples} 

\subsubsection{Modulated $\mathrm{sech}$ potential with no discrete spectrum}

In \eqref{eq:sechpot} we choose $\tcr{\amp} = 1.55$, $\gamma = 2.0$. This produces no discrete spectrum $z_j^\pm$.   The magnitude of the coefficients in the expansion of $\rho_1,\rho_2$ in the basis $\{R_{j,0}\}$ with $\nu = 1$ are displayed in Figure~\ref{f:nosol-coefs}.  \tcr{To optimize this, one can follow the approach of \cite{Weideman1994} and choose $\nu$ to minimize the number of coefficients one needs to calculate to achieve a prescribed tolerance.}

\begin{figure}[tbp]
    \centering
    \includegraphics[width=0.8\linewidth]{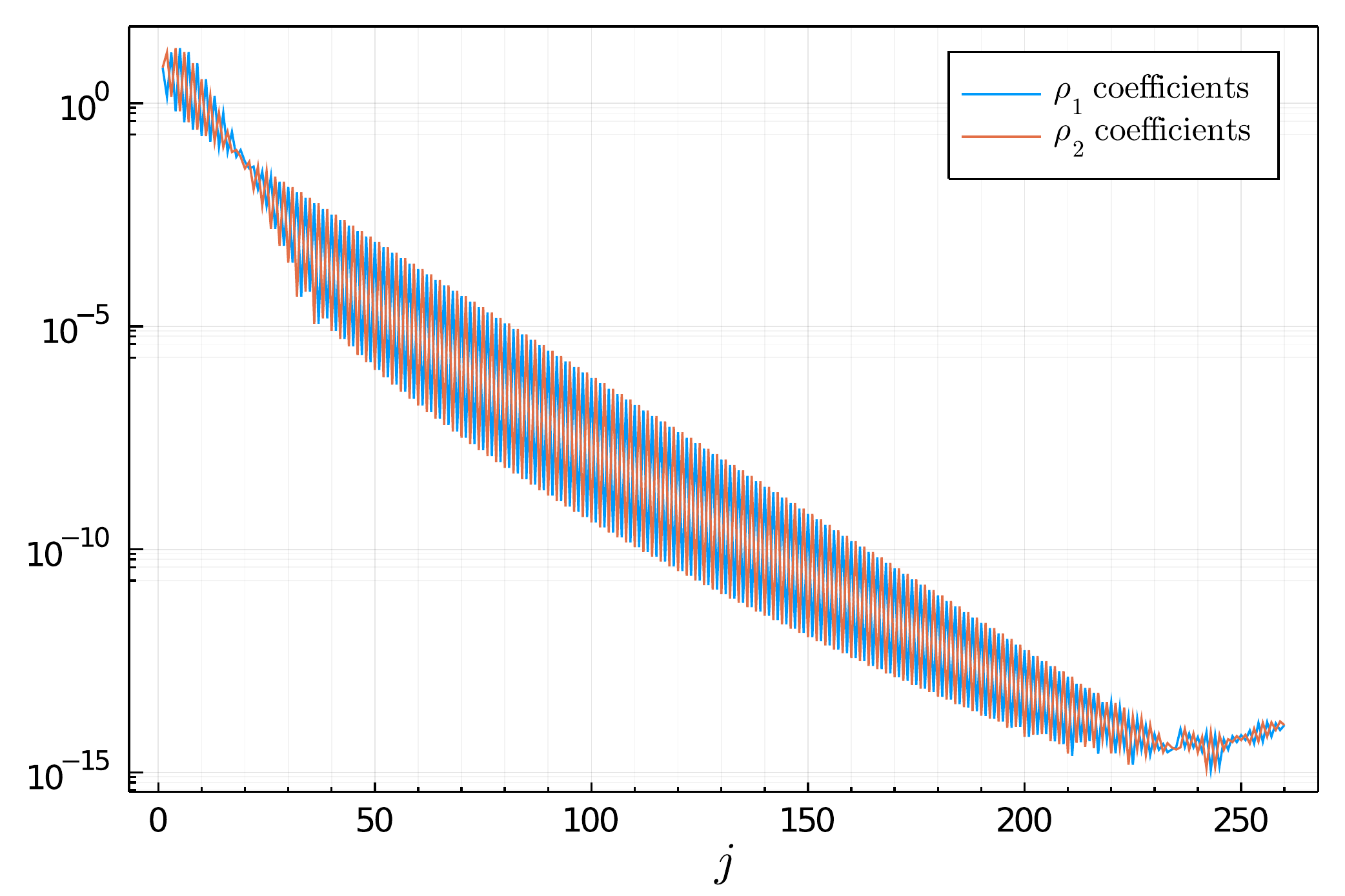}
    \caption{The magnitude of the interlaced expansion coefficients for the reflection coefficients $\rho_1,\rho_2$ for \eqref{eq:sechpot} when $r(x) = - \overline{q(x)}$, $\tcr{\amp} = 1.65, \gamma = 0.1$, using the interpolation operator $\mathcal R_n$ as described in \ref{sec:ratapprox}.  It takes less than 250 coefficient to represent both $\rho_1,\rho_2$ to within approximately machine epsilon.}
    \label{f:nosol-coefs}
\end{figure}

We display the number of GMRES iterations required to achieve a residual less that $2 \times 10^{-12}$ in Figures~\ref{f:FNLS-GMRES-nosol-large-x} and \ref{f:FNLS-GMRES-nosol-small-x}. Importantly, the number of required iterations of GMRES is both very small and decays as $|x|$ increases.
\begin{figure}[tbp]
    \centering
    \begin{subfigure}[b]{.49\textwidth}
         \centering
         \includegraphics[width=\linewidth]{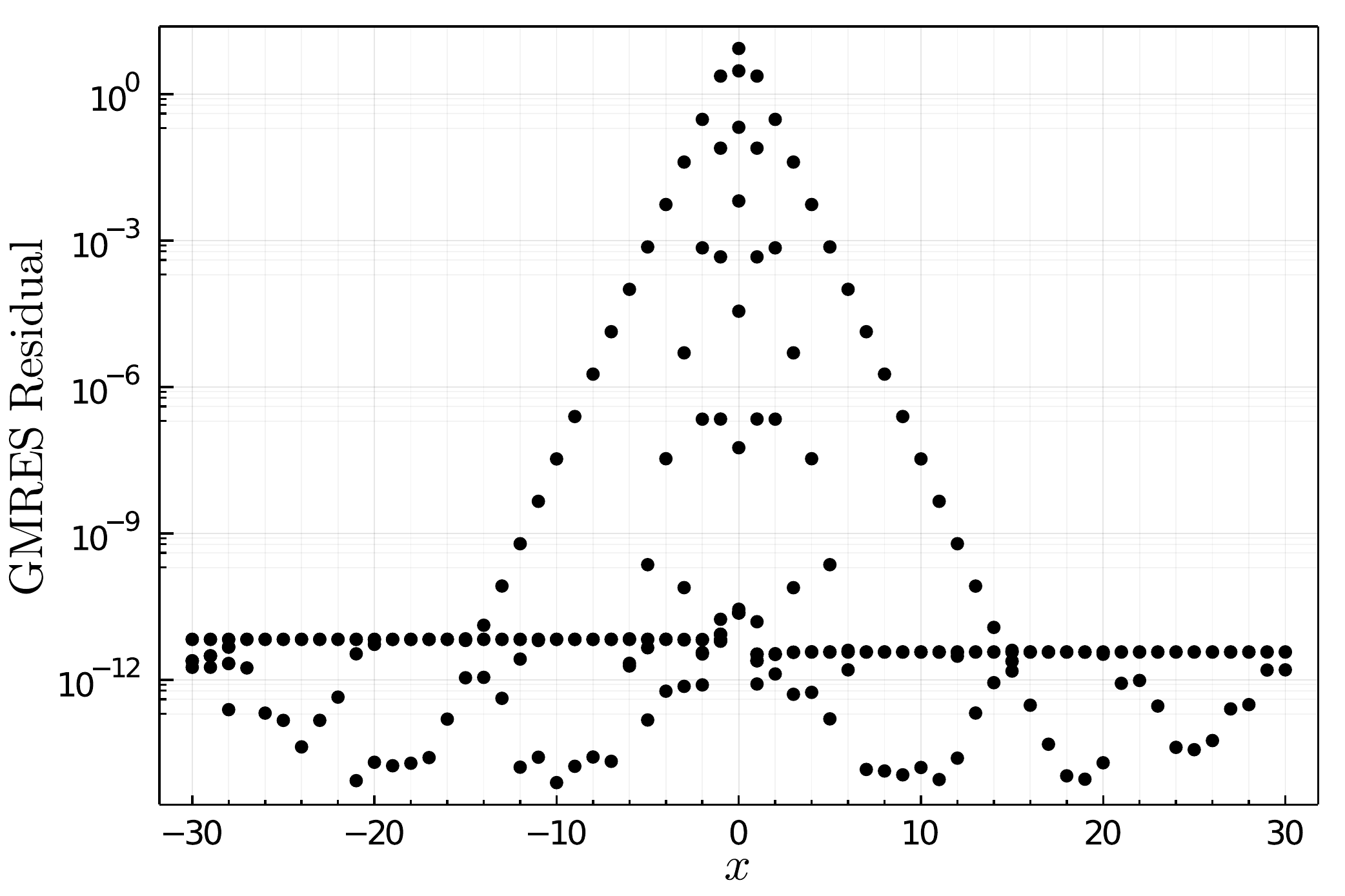}
         \caption{}
         \label{f:FNLS-GMRES-nosol-large-x}
     \end{subfigure}
     \begin{subfigure}[b]{.49\textwidth}
         \centering
         \includegraphics[width=\linewidth]{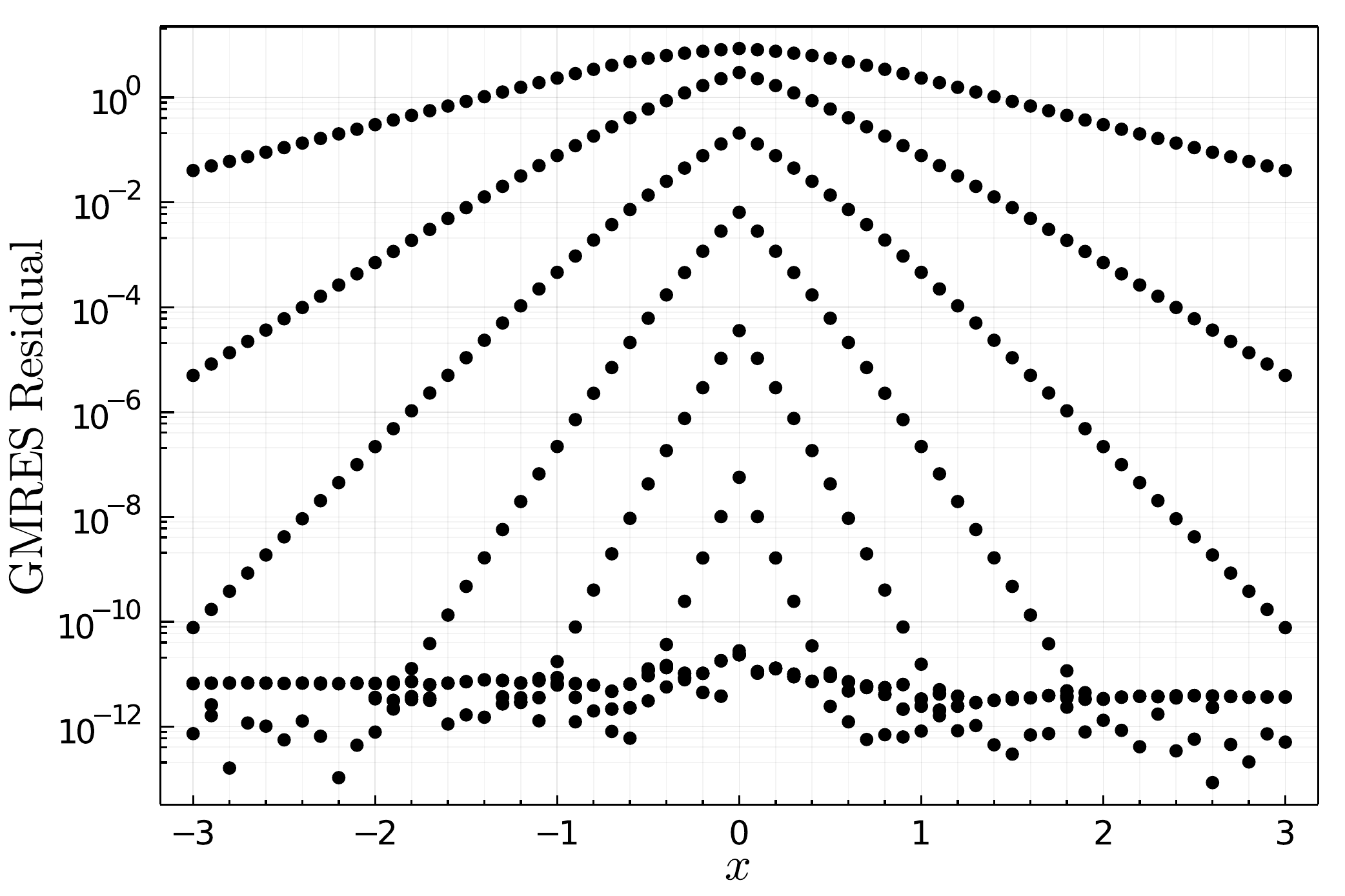}
         \caption{}
         \label{f:FNLS-GMRES-nosol-small-x}
       \end{subfigure}
       \caption{The number of iterations of the GMRES algorithm requires to acheive a residual less than $2 \times 10^{-12}$ for \eqref{eq:sechpot} with $\tcr{\amp} = 1.65, \gamma = 0.1$. For the given value of $x$ we plot the error that is realized after $k = 2,3,4,\ldots$ iterations.  For example, panel (b) shows that for $x = -1$, the $L^2(\mathbb R)$ norm of the second residual is larger than one, the fourth residual is less than $10^{-2}$ and larger than $10^{-4}$. (a) Larger values of $x$. (b) Small values of $x$. }
\end{figure}

\subsubsection{Modulated $\mathrm{sech}$ potential data with discrete spectrum}

Now, in \eqref{eq:sechpot} we choose $\tcr{\amp} = 1.55$, $\gamma = 0.1$. This produces eigenvalues:
\begin{align*}
    z_1^\pm &\approx \pm 1.14793620932364\I,\\
    z_2^\pm &\approx \pm 0.14793620932364\I.  
\end{align*}
These values, while computed numerically, agree with those given in \eqref{eq:zjs} to all displayed digits.  The coefficients in the expansion of $\rho_1,\rho_2$ in the basis $\{R_{j,0}\}$ with $\nu = 1$ are displayed in Figure~\ref{f:sol-coefs}.

\begin{figure}[tbp]
    \centering
    \includegraphics[width=0.8\linewidth]{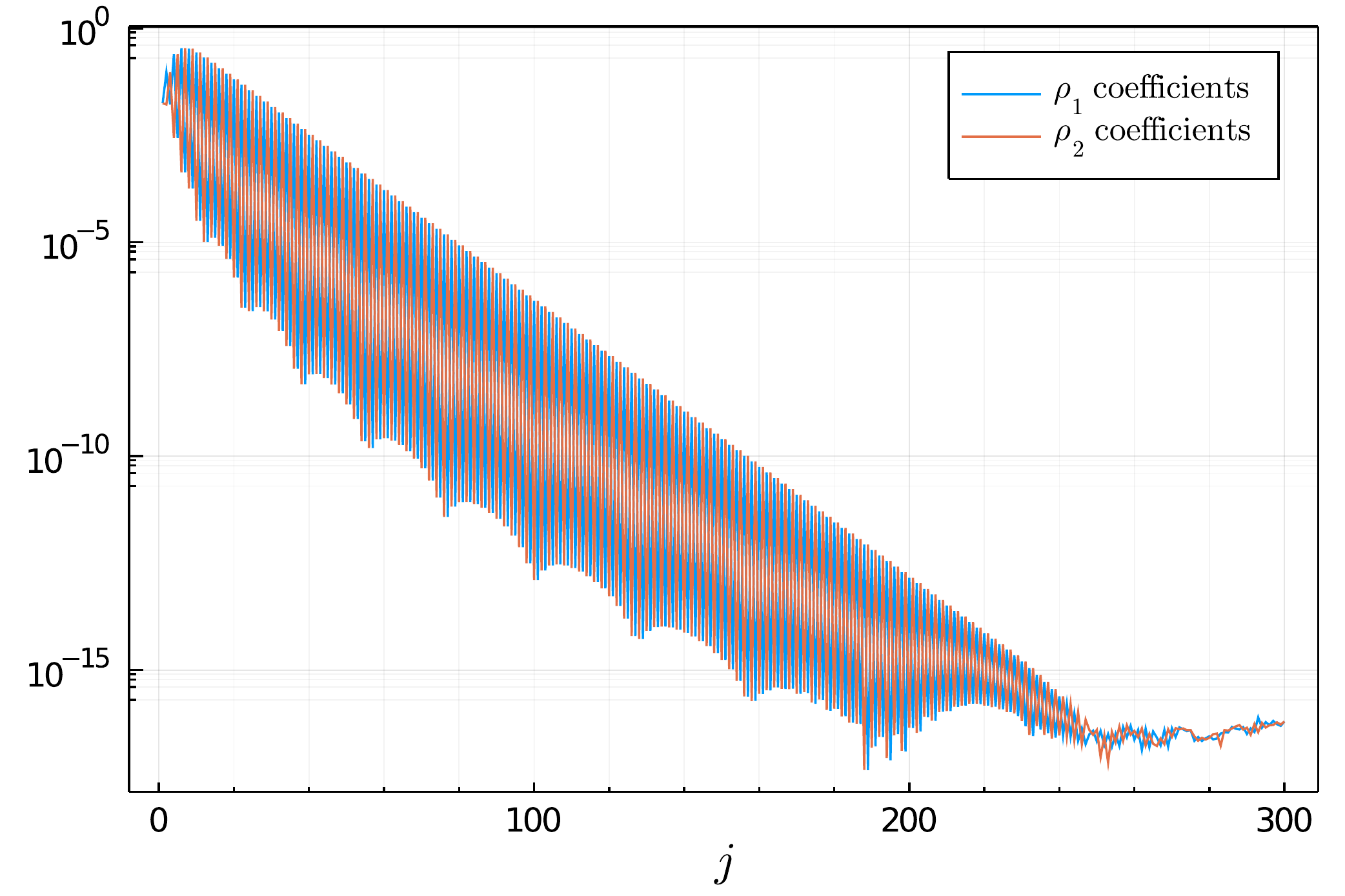}
    \caption{The magnitude of the expansion coefficients $c_j$ for the of the reflection coefficients $\rho_1,\rho_2$ for \eqref{eq:sechpot} when $r(x) = - \overline{q(x)}$, $\tcr{\amp} = 1.55, \gamma = 2.0$, using the interpolation operator $\mathcal R_n$ as described in \ref{sec:ratapprox}.  It takes less than 300 coefficients to represent both $\rho_1,\rho_2$ to within approximately machine epsilon.}
    \label{f:sol-coefs}
\end{figure}

We display the number of GMRES iterations required to achieve a residual less that $2 \times 10^{-12}$ in Figures~\ref{f:FNLS-GMRES-large-x} and \ref{f:FNLS-GMRES-small-x}.  And, as before, the number required iterations decrease as $|x|$ increases.
\begin{figure}[tbp]
    \centering
    \begin{subfigure}[b]{.49\textwidth}
         \centering
         \includegraphics[width=\linewidth]{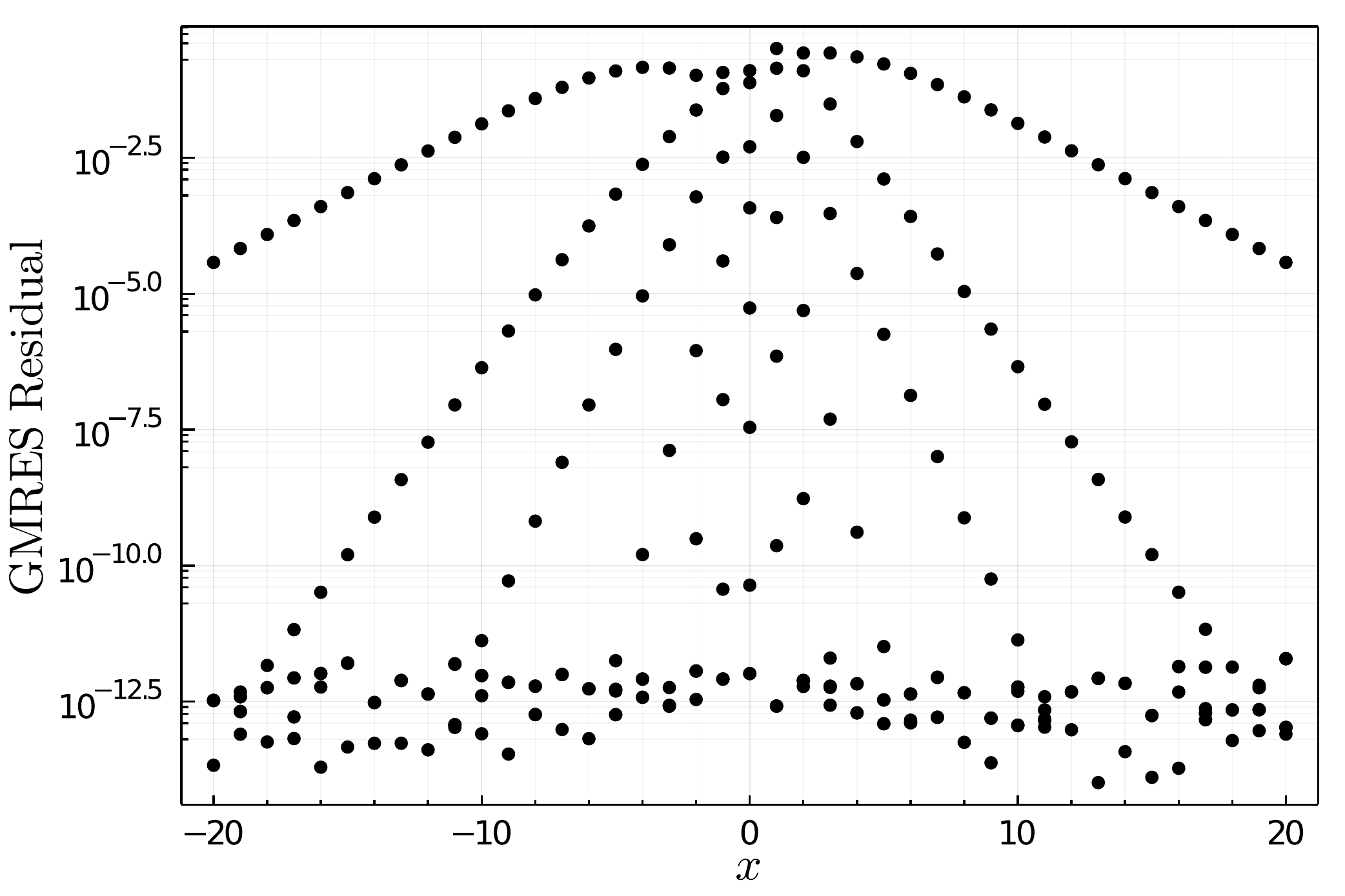}
         \caption{}
         \label{f:FNLS-GMRES-large-x}
     \end{subfigure}
     \begin{subfigure}[b]{.49\textwidth}
         \centering
         \includegraphics[width=\linewidth]{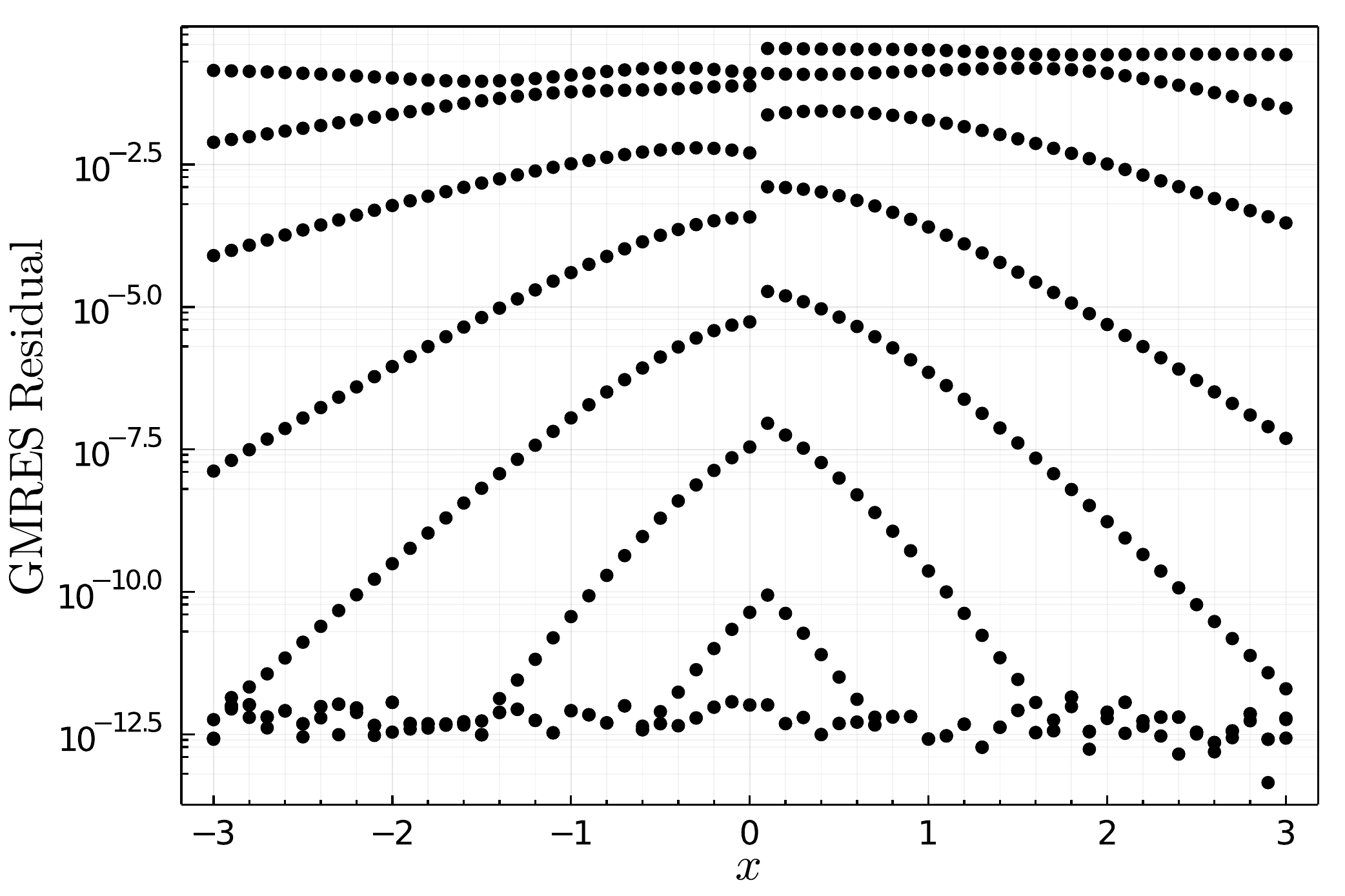}
         \caption{}
         \label{f:FNLS-GMRES-small-x}
       \end{subfigure}
       \caption{The number of iterations of the GMRES algorithm requires to achieve a residual less than $2 \times 10^{-12}$ for \eqref{eq:sechpot} with $\tcr{\amp} = 1.55, \gamma = 2.0$. For the given value of $x$ we plot the error that is realized after $k = 2,3,4,\ldots$ iterations. (a) Larger values of $x$. (b) Small values of $x$.}
\end{figure}

\section{High-precision computations}\label{sec:hi-prec}

Beyond simple arithmetic operations, the numerical method presented in this paper for the inverse problem relies on only two existing features in a software package:
\begin{enumerate}
    \item The fast Fourier transform.
    \item The exponential function.
\end{enumerate}
So, provided these two components can be computed with high or variable precision, and arithmetic is supported in high or variable precision, the numerical method immediately extends to high or variable precision.   This is true of the programming language {\tt Julia}.  The limiting factor in implementing this methodology in high precision is typically computing $\rho_1,\rho_2$ and $\gamma_1,\gamma_2$ with sufficient accuracy.  While we could consider the modulated $\mathrm{sech}$ potential discussed above, we consider another case of interest where
\begin{align*}
    a(k) &= \frac{\Gamma(\tilde a(k,U_0))\Gamma(\tilde b(k,U_0)) }{\Gamma(\tilde c(k))\Gamma(\tilde a(k,U_0) + \tilde b(k,U_0) - \tilde c(k))}\\
    \rho_1(k) &= \frac{a(k) \Gamma(\tilde c(k)) \Gamma(\tilde c(k) -\tilde a(k,U_0) -\tilde b(k,U_0) )}{\Gamma(\tilde c(k)-\tilde a(k,U_0)) \Gamma(\tilde c(k) - \tilde b(k,U_0))},\\
    \tilde b(k,U_0) &= \frac 1 2 - \I k - \left(U_0 - \frac 1 4\right)^{1/2},\\
    \tilde a(k,U_0) &= \frac 1 2 - \I k - \left(U_0 - \frac 1 4\right)^{1/2}, \quad \tilde c(k) = 1 - \I k,\\
    \rho_2(k) & = \overline{\rho_1(\bar k)}.
\end{align*}
This formula gives the reflection coefficient in the case that $r(x) = U_0 \mathrm{sech}^2(x)$ and $q(x) = -1$ \cite{johnson}, the scattering problem associated to the Korteweg-de Vries equation with initial condition $r(x)$.  If $U_0 < 0$ then $a(k)$ has no zeros in the upper-half plane and, for the sake of simplicity, this is the only case we consider in this section.

Due to the fact that $q(x)$ does not decay at infinity, the reconstruction formula for $r(x)$ has to be modified \cite{AblowitzClarksonSolitons} to
\begin{align*}
    r(x) = \lim_{k \to \infty} 2\I k \frac{\partial}{\partial x}\left[ M_{21}(k;x) + M_{11}(k;x)  \right].
\end{align*}
This is best accomplished by solving for a vector-valued unknown $u$ and replacing the right-hand side in \eqref{eq:SIE} with
\begin{align*}
    \begin{bmatrix} \rho_1(k) \E^{2 \I k x} & -\rho_2(k) \E^{-2 \I k x}\end{bmatrix}.
\end{align*}
Once this is solved for an approximation of $u$, the resulting singular integral equation can easily be differentiated with respect to $x$ and solved again with a new right-hand side to approximate $ \frac{\partial u}{\partial x}$.

With the explicit formulae in hand for $\rho_1,\rho_2$, we use {\tt Mathematica} to compute $\rho_1$ at a set of grid points that are sufficient to then create expansions $\sum_{j=1}^n r_j R_{j,0}(k)$, $\sum_{j=1}^n \hat r_j R_{j,0}(k)$ such that
\begin{align*}
   \sup_{k \in \mathbb R} \left| \rho_1(k) - \sum_{j=-n}^n r_j R_{j,0}(k)\right| &< 10^{-50},\\
   \sup_{k \in \mathbb R} \left| k\rho_1(k) - \sum_{j=-n}^n \hat r_j R_{j,0}(k)\right| &< 10^{-50}.
\end{align*}
We find that using $2048$ terms in the sum is sufficient.  The approximation of $k \rho_1(k)$ is needed when the derivative $\frac{\partial u}{\partial x}$ is computed.

Replacing $\rho_1(k)$ with this approximation, GMRES until the residual is less than $10^{-70}$ to get approximations of $u$ and $ \frac{\partial u}{\partial x}$.  Then we find the reconstruction of $r(x)$ as
\begin{align*}
 r(x) \approx \frac{1}{\pi} \mathrm{p.v.} \int_{-\infty}^\infty \frac{\partial u_1}{\partial x}(k;x) \D k.
\end{align*}
All of this implies an error of approximately $10^{-50}$ as is confirmed by numerical experiments, see Figure~\ref{f:hiprec-gmres}.
\begin{figure}[tbp]
    \centering
    \begin{subfigure}[b]{.49\textwidth}
         \centering
         \includegraphics[width=\linewidth]{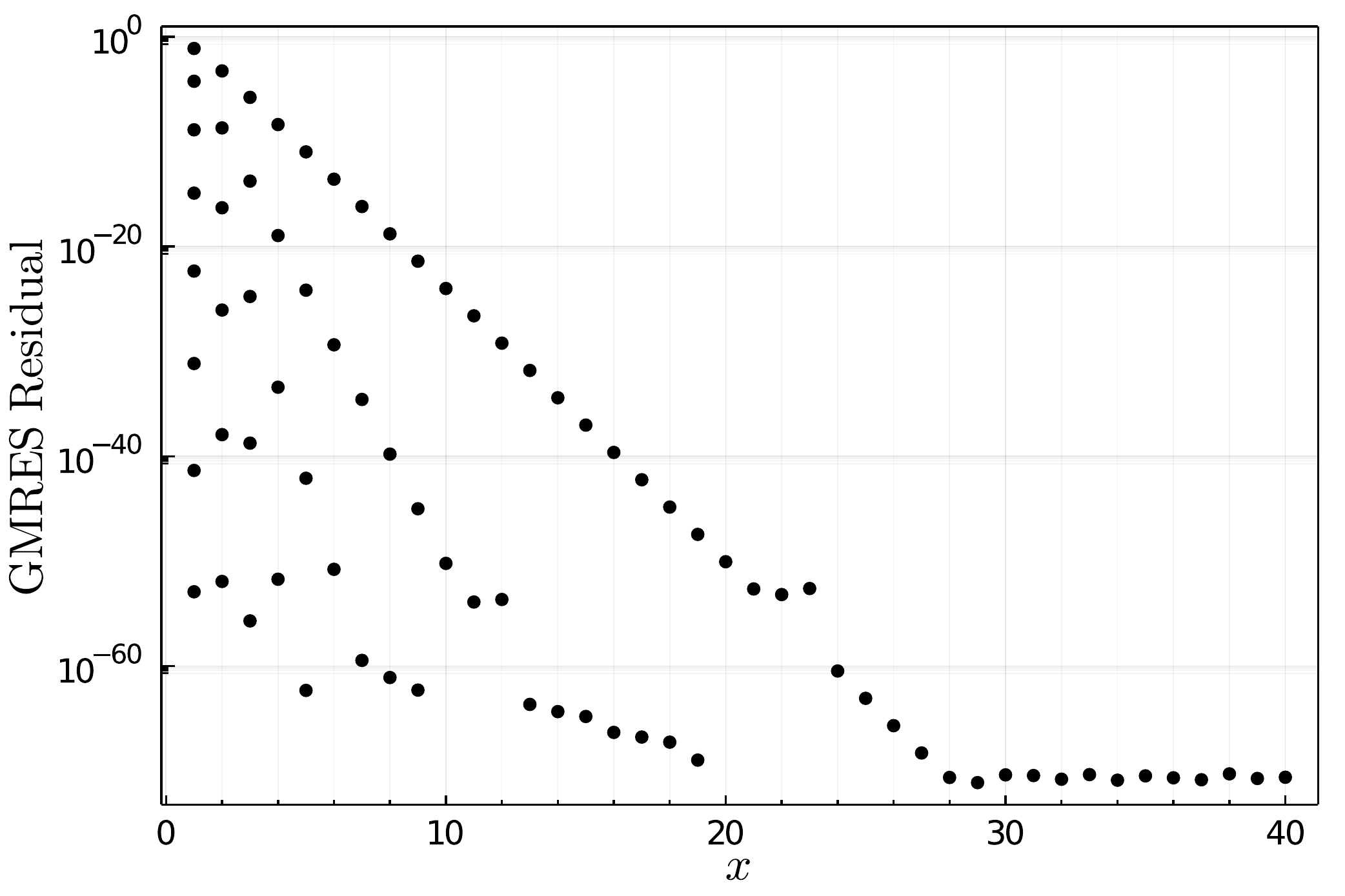}
         \caption{}
         \label{f:hiprec-gmres}
     \end{subfigure}
     \begin{subfigure}[b]{.49\textwidth}
         \centering
         \includegraphics[width=\linewidth]{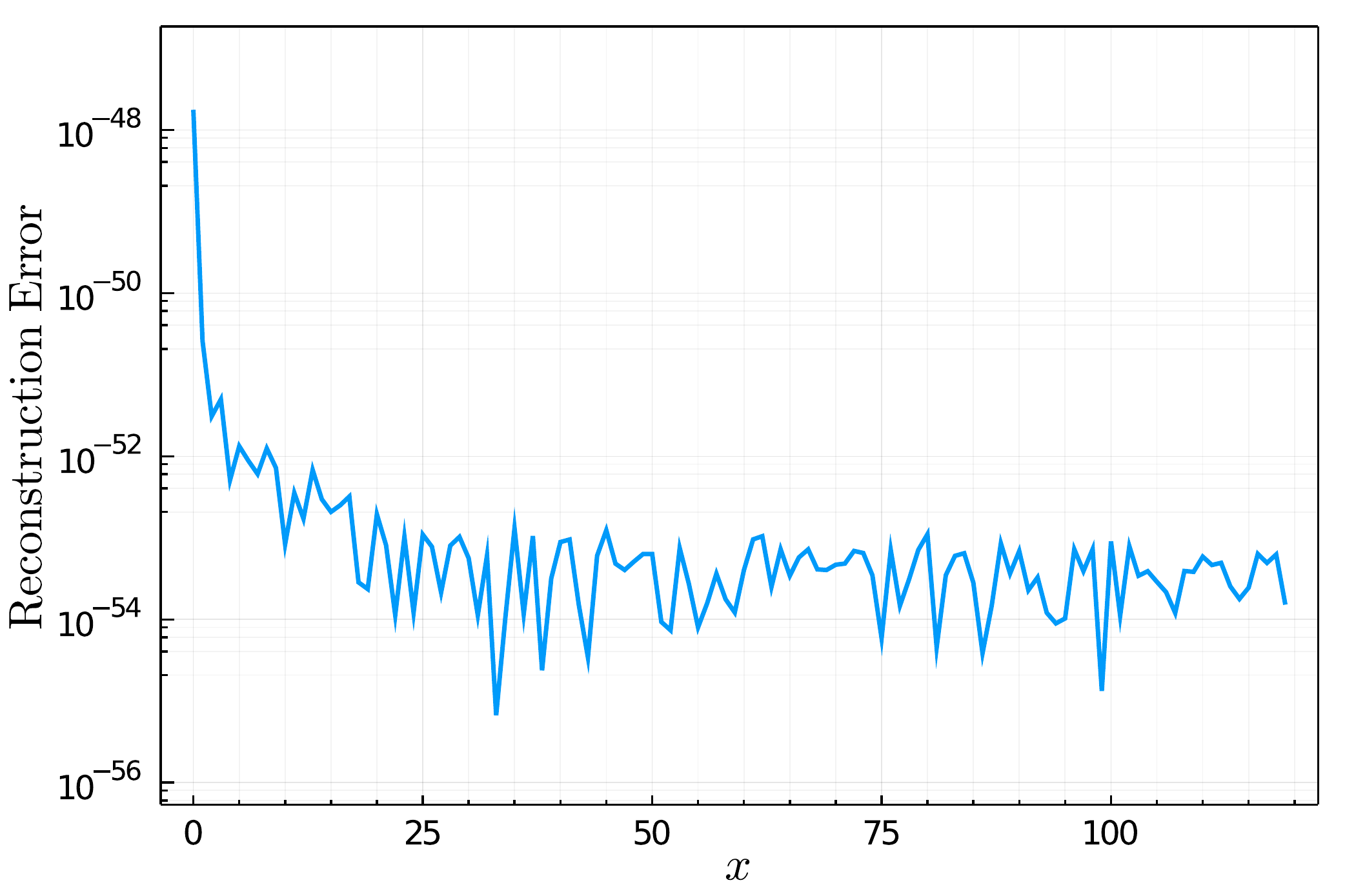}
         \caption{}
         \label{f:hiprec-err}
       \end{subfigure}
       \caption{(a) The number of iterations of the GMRES algorithm required to acheive a residual less than $10^{-60}$ for $r(x) = U_0 \mathrm{sech}^2(x)$, $q(x) = -1$ with $U_0 = -1$. For the given value of $x$ we plot the error that is realized after $k = 2,3,4,\ldots$ iterations.  For example, panel (a) shows that for $x = 10$, the $L^2(\mathbb R)$ norm of the second residual is larger than $10^{-20}$ and the third residual is less than $10^{-40}$ and larger than $10^{-60}$. (b) The error in the recovery of $r(x)$ as $x$ increases.  }
\end{figure}

\section{Conclusions and open questions}\label{sec:conc}

We have described an approach to scattering and inverse scattering for the AKNS system using only rational functions and the fast Fourier transform.  The approach can be seen as a nonlinearization of two methods for the Fourier transform on $\mathbb R$.  The forward scattering transform is effective but suffers from some loss of accuracy for exceptional values of $k$.  The method for inverse transform requires no contour deformation and is more efficient for larger values of $|x|$.  A cursory implementation of these ideas can be found here \cite{Trogdon2021,Trogdon2021a}.

But there are many open questions and possible directions for improvement of the method:

\begin{itemize}
    \item It is important to understand how choice of $\mathfrak r$ versus $\mathfrak g$ (or another choice altogether) affects the convergence rate and the overall accuracy of the method.  Additionally, it is important to understand if the parameter $\nu$ that is present in the basis $\{R_{j,0}\}$ can be chosen optimally.
    \item As mentioned, the forward scattering method described here exhibits some loss of accuracy.  It is possible that appropriate use the the adaptive QR algorithm may eleviate this issue.
    \item The formulae given in \eqref{eq:rja} for the oscillatory Cauchy operator should produce effective bounds on the inverse operator for the operators on the left-hand side of \eqref{eq:SIE} and \eqref{eq:SIE2}.  These can be incorporated along with Theorem~\ref{t:error} to produce error bounds.  And then, giving a proof of the convergence rate of the GMRES algorithm would complete the analysis.
    \item It is of interest to be able to compute scattering data in high-precision.  The operator in \eqref{eq:scatteringsys} can be applied to a vector using the FFT.  If a sparse preconditioner can be devised, and maybe implemented using the adaptive QR algorithm, then GMRES, or some other iterative method, might provide an effective way to treat thousands of unknowns using high-precision arithmetic. 
\end{itemize}

\section*{Acknowledgements}

The author would like to thank Bernard Deconinck and Sergey Dyachenko for suggesting the problem of using high-precision arithmetic in computing the inverse scattering transform.  This material is based upon work supported by the National Science Foundation under Grant DMS-1945652.

\appendix

\section{Rational functions and oscillatory Cauchy integrals}\label{sec:osccauchy}

\subsection{Rational approximation}\label{sec:ratapprox}
\renewcommand{\omega}{k}

We consider the problem of the rational approximation of $f : \mathbb R \to \mathbb C$, under suitable regularity conditions.  We follow \cite{Trogdon2014} and first discuss trigonometric interpolation of an associated continuous periodic function $F$. For $n \in \mathbb N$, define $\theta_j = 2 \pi j / n$ for $j = 0, \ldots, n-1$ and two positive integers
\begin{align*}
n_+ = \lfloor n/2 \rfloor, \quad n_- = \lfloor (n-1)/2 \rfloor.
\end{align*}
Note that $n_+ + n_- + 1 = n$ regardless of whether $n$ is even or odd.

\begin{definition}
The discrete Fourier transform of order $n$ of a continuous function $F$ is the mapping
\begin{align*}
\mathcal F_n F &= [ \tilde F_0, \tilde F_1, \ldots, \tilde F_n]^T,\\
\tilde F_k &= \frac{1}{n} \sum_{j=0}^{n-1} e^{-ik\theta_j} F(\theta_j).
\end{align*}
\end{definition}
The fast Fourier transform (FFT) \cite{Cooley1965} is an algorithm that implements the discrete Fourier transform in $O(n \log n)$ floating point operations.  Note that the dependence on $n$ is implicit in the $\tilde F_k$ notation.  This formula produces the coefficients for the trigonometric interpolant of $F$:
\begin{align*}
  \mathcal I_nF(\theta) = \sum_{k=-n_-}^{n_+} e^{ik\theta} \tilde F_k, \quad \mathcal I_nF(\theta_j) = F(\theta_j), \quad j = 0,\ldots,n-1.
\end{align*}
Now, define a one-parameter family of M\"obius transformations
\begin{align*}
T_\nu(\omega) = \frac{\omega - \I \nu}{\omega+\I \nu}, \quad T^{-1}_\nu(z) = \frac{\nu}{\I} \frac{z+1}{z-1}, \quad \nu > 0.
\end{align*}
And related to these transformations are set of basis functions
\begin{align}\label{eq:Rjs}
    R_{j,\alpha}(\omega) = \E^{\I k \alpha} \left[ \left( \frac{\omega - \I \nu}{\omega + \I \nu} \right)^j  - 1\right].
\end{align}
For $\alpha \neq 0$ we call these functions \emph{oscillatory rational functions}.

For each $\nu$, $T_\nu$ maps the real axis onto the unit circle. Assume $f$ is a smooth function on $\mathbb R$, decaying at infinity.  We further assume that $f$ is either rapidly decaying or a rational function.  Then $f$ is mapped to a smooth function on $[0,2\pi]$ by $F(\theta) = f(T_\nu^{-1}(\E^{\I\theta}))$.  Thus, the FFT may be applied to $F(\theta)$ to obtain an interpolant $\mathcal I_n F(\theta)$. The transformation $x = T_\nu^{-1}(\E^{\I\theta})$ is inverted:
\begin{align*}
\mathcal R_nf(\omega) := \mathcal I_n F(T_\nu^{-1}(\omega)),
\end{align*}
is a rational approximation of $f$.  This approximation will converge rapidly as $n \to \infty$ \cite{Trogdon2014}.  We find
\begin{align*}
\mathcal R_nf(\omega) = \sum_{j=-n_-}^{n_+}\tilde F_j T_\nu(\omega)^j = \sum_{j=-n_-}^{n_+}\tilde F_j R_{j,0}(\omega)
\end{align*}
because $f(\infty) = 0$ and we choose $\theta = 0$ ($\omega = \infty)$ to be an interpolation point.  Finally, define the oscillatory interpolation operator by
\begin{align}\label{eq:interp_op}
\mathcal R_{n,\alpha} f(\omega) = \E^{\I \alpha \omega} \mathcal R_n[f(\cdot) \E^{- \I \alpha (\cdot)} ](\omega).
\end{align}
Note that this operator also depends on $\nu$ but we suppress that dependency.

We overload this notation to allow inputs that are vectors.  For $\vec c = [c_0,\ldots,c_{n-1}]^T \in \mathbb R^n$ define
\begin{align*}
    \mathcal R_{n,\alpha} \vec c(\omega) = \mathcal R_{n,\alpha} g(\omega),
\end{align*}
where $g$ is any function on $\mathbb R$ satisfying $g(T_\nu^{-1}(\E^{\I \theta_j})) = c_j$ for $j = 0,1,\ldots,n-1$.  This allows the interpolation operator to just take in data at the interpolation points and return the interpolant.

In the next two sections we discuss important properties of $R_{j,\alpha}(\omega)$.

\subsection{Differentiation and multiplication}\label{sec:diffint}

A straightforward calculation \cite{Trogdon2014} shows that
\begin{align*}
    R_{j,0}'(\omega) = \frac{\I}{\nu} \left[ - \frac{j}{2} R_{j+1,0}(\omega) + j R_{j,0}(\omega) - \frac{j}{2} R_{j-1,0}(\omega) \right],
\end{align*}
so that differentiation is a tridiagonal operator.  This is also a consequence of the fact that, as derived below, the Fourier transform of these functions are weighted orthogonal polynomials \cite{Iserles2018}.  We also have
\begin{align}\label{eq:tridiag-diff}
    R_{j,\alpha}'(\omega) = \frac{\I}{\nu} \left[ - \frac{j}{2} R_{j+1,\alpha}(\omega) + (j + \alpha\nu) R_{j,\alpha}(\omega) - j R_{j-1,\alpha}(\omega) \right].
\end{align}
When it comes to function multiplication we use the fact that
\begin{align*}
    R_{j,\alpha}(\omega) R_{\ell,\beta}(\omega) = R_{j+\ell,\alpha + \beta}(\omega) - R_{j,\alpha + \beta} - R_{\ell,\alpha+\beta}(\omega).
\end{align*}

Suppose $g = \sum_j c_j R_{j,\alpha}(\omega)$ and consider the operator $\mathcal M_g f = g f$ via:
\begin{align*}
    g(\omega) R_{\ell,\beta}(\omega) & = \sum_j c_j R_{j + \ell,\alpha + \beta}(\omega) - \sum_j c_j R_{j,\alpha + \beta}(\omega) - \sum_j c_j R_{\ell,\alpha + \beta}(\omega)\\
    & = \sum_j c_{j-\ell} R_{j,\alpha + \beta}(\omega)- \sum_j c_j R_{j,\alpha + \beta}(\omega)  - \left[ \sum_j c_j \right] R_{\ell,\alpha + \beta}(\omega).
\end{align*}
This implies that $\mathcal M_g$ has a bi-infinite matrix representation as
\begin{align*} 
\mathcal M(\vec c) &:=    \mathcal T(\vec c) - \vec c \begin{bmatrix} \cdots & 1 & 1 & \cdots \end{bmatrix} - \left[ \sum_j c_j \right] I, \\
\vec c &= \begin{bmatrix}
\cdots & c_{-1} & c_0 & c_{1} & \cdots \end{bmatrix}^T. 
\end{align*}
Here $\mathcal T(\vec c)$ is the Toeplitz operator with entry $(i,j)$ given by $c_{i-j}$.  Note that the ordering of $\vec c$ is different here than that used in the main text.  To accommodate this, we introduce the interlacing operator $\mathcal I$ mapping bi-infinite sequences to semi-infinite ones by
\begin{align*}
    \mathcal I \vec c = \begin{bmatrix} c_0 \\ c_1 \\ c_{-1} \\ c_2 \\ \vdots \end{bmatrix}.
\end{align*}
Then the semi-infinite matrix $\mathcal M_{\mathcal I}(\vec c)$, for a semi-infinite vector $\vec c$,  is defined to be the matrix found by deleting the first row and first column of
\begin{align}
    \mathcal I\mathcal M(\mathcal I^{-1}\vec c) \mathcal I^{-1}.\label{eq:multmat}
\end{align}
This row and column is deleted because $R_{0,\alpha}  = 0$.

\subsection{Cauchy integrals}\label{sec:cauchy-app}

Recall the notation for the Cauchy integral
\begin{align}\label{eq:cauchy-def}
    \begin{split}
    \mathcal C f(\omega) = \frac{1}{2 \pi \I} \int_{-\infty}^\infty \frac{f(k')}{k-k'} \D k, \quad \omega \in \mathbb C \setminus \mathbb R,\\
    \quad \mathcal C^\pm f(\omega) = \lim_{\epsilon \to 0^+} \mathcal C f(\omega \pm \I \epsilon), \quad \omega \in \mathbb R.
    \end{split}
\end{align}
Consider the problem of expressing $\mathcal C R_{j,\alpha}(\omega)$ in terms of $R_{j,\alpha}(\omega)$ and $R_{j,0}(\omega)$.  It is not \emph{a priori} clear this is possible.  The following is from \cite{Trogdon2013} and it shows that this is indeed possible.  Define for $j > 0$, $n > 0$,
\begin{align*}
\gamma_{j,n}(\alpha) = - \frac{j}{n} e^{- |\alpha| \nu } \left(\begin{array}{cc} j-1 \\ n \end{array}\right) \phantom{.}_1F_1(n-j,1+n,2 |\alpha| \nu),
\end{align*}
where $_1F_1$ is Krummer's confluent hypergeometric function
\begin{align}\label{eq:krum}
_1F_1(a,b,z) = \sum_{\ell =0}^\infty \frac{\Gamma(a +\ell)}{\Gamma(a)} \frac{\Gamma(b)}{\Gamma(b+\ell)} \frac{z^\ell}{\ell!},
\end{align}
and $\Gamma$ denotes the Gamma function \cite{DLMF}.   Further, define
\begin{align*}
\eta_{j,n}(\alpha) = \sum_{\ell=n}^{j} (-1)^{n+\ell} \left(\begin{array}{cc} \ell \\ n \end{array}\right) \gamma_{\ell,n}(\alpha).
\end{align*}
\begin{theorem}[\hspace{-1pt}\cite{Trogdon2013}]\label{Theorem:CauchyAction}
If $\alpha j \geq 0$ then
\begin{align*}
  \mathcal C^+ R_{j,\alpha}(\omega) &= \begin{cases}
    R_{j,\alpha}(\omega) & j > 0,
    \\ 0 & j < 0, \end{cases}\\
\mathcal C^- R_{j,\alpha}(\omega) &= \begin{cases} 0 & j > 0 \\ -R_{j,\alpha}(\omega) & j < 0.\end{cases}
\end{align*}
If $\alpha j < 0$ then
\begin{align*}
\mathcal C^+ R_{j,\alpha}(\omega) & = \begin{cases} - \displaystyle \sum_{n=1}^{j} \eta_{j,n}(\alpha) R_{n,0}(\omega) & j > 0,\\ 
R_{j,\alpha}(\omega) +  \displaystyle\sum_{n=1}^{-j} \eta_{j,n}(\alpha) R_{-n,0}(\omega) & j < 0, \end{cases}\\
\mathcal C^- R_{j,\alpha}(\omega) & = \begin{cases} -R_{j,\alpha}(\omega)-  \displaystyle\sum_{n=1}^{j} \eta_{j,n}(\alpha) R_{n,0}(\omega) & j > 0,\\ 
  \displaystyle\sum_{n=1}^{-j} \eta_{j,n}(\alpha) R_{-n,0}(\omega) & j < 0. \end{cases}
\end{align*}
\end{theorem}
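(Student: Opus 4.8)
The plan is to split on the sign of $\alpha j$. The block $\alpha j \ge 0$ I would dispatch by analyticity and decay in a half-plane. The rational function $T_\nu(k)^j - 1 = R_{j,0}(k)$ is $O(1/k)$ at infinity and has its only finite pole at $k = -\I\nu \in \mathbb{C}^-$ when $j > 0$ and at $k = \I\nu \in \mathbb{C}^+$ when $j<0$; so it is analytic and decaying in $\mathbb{C}^+$ for $j>0$ and in $\mathbb{C}^-$ for $j<0$. Multiplying by $\E^{\I\alpha k}$ with $\alpha j \ge 0$ only strengthens decay in that same half-plane. A function that is the boundary value of something analytic and decaying in $\mathbb{C}^+$ (resp.\ $\mathbb{C}^-$) is reproduced by $\mathcal{C}^+$ and killed by $\mathcal{C}^-$ (resp.\ the mirror statement, carrying the sign seen in the theorem); combined with $\mathcal{C}^+ - \mathcal{C}^- = I$ this yields the first block at once.

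For $\alpha j < 0$ I would evaluate the Cauchy integral by contour deformation. Take $j>0$, $\alpha<0$ (the case $j<0$, $\alpha>0$ is its mirror image under $k\mapsto -k$, which interchanges $\mathcal{C}^+$ and $\mathcal{C}^-$). Writing
\[
\mathcal{C}R_{j,\alpha}(z) = \frac{1}{2\pi\I}\int_{-\infty}^\infty \frac{\E^{\I\alpha k'}\bigl(T_\nu(k')^j - 1\bigr)}{k'-z}\,\D k',
\]
I note that $\E^{\I\alpha k'}$ decays in $\mathbb{C}^-$ because $\alpha<0$, so the contour may be closed downward. The only singularities in $\mathbb{C}^-$ are the kernel pole at $k'=z$, present exactly when $z\in\mathbb{C}^-$, and the order-$j$ pole of $T_\nu(k')^j$ at $k'=-\I\nu$. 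The residue at $k'=z$ equals $R_{j,\alpha}(z)$ and supplies the $-R_{j,\alpha}$ term of the $\mathcal{C}^-$ formula while contributing nothing for $z\in\mathbb{C}^+$; the residue at $k'=-\I\nu$ is the same for $z$ above or below the axis and constitutes the whole of $\mathcal{C}^+R_{j,\alpha}$.

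Everything thus reduces to one order-$j$ residue at $k'=-\I\nu$, which I would expand by the Leibniz rule applied to $\E^{\I\alpha k'}$, $(k'-\I\nu)^j$, and $(k'-z)^{-1}$. Derivatives landing on $\E^{\I\alpha k'}$ produce powers of $\alpha$ that resum, with the prefactor $\E^{\I\alpha(-\I\nu)}=\E^{-|\alpha|\nu}$, into Kummer's ${}_1F_1(n-j,1+n,2|\alpha|\nu)$; derivatives landing on $(k'-\I\nu)^j$ produce powers of $2\I\nu$ and the binomial $\binom{j-1}{n}$; and derivatives landing on the kernel produce exactly the functions $(z+\I\nu)^{-m}$, $m=1,\dots,j$. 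This last fact is why the answer lands in $\mathrm{span}\{R_{n,0}\}_{n=1}^{j}$: the monomials $(z+\I\nu)^{-m}$, $1\le m\le j$, and the functions $T_\nu(z)^n-1=R_{n,0}(z)$, $1\le n\le j$, are connected by an invertible triangular change of basis, and the alternating binomial sum that builds $\eta_{j,n}(\alpha)$ out of the $\gamma_{\ell,n}(\alpha)$ is precisely this change of basis. Taking boundary values $z\to k\pm\I 0$ then delivers $\mathcal{C}^\pm R_{j,\alpha}$ in the stated form.

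The main obstacle is this residue evaluation: carrying out the Leibniz expansion cleanly, resumming the $\alpha$-dependence into ${}_1F_1$ and recognizing the coefficients $\gamma_{j,n}$, and then verifying that the triangular passage from $\{(z+\I\nu)^{-m}\}$ to $\{R_{n,0}\}$ reproduces $\eta_{j,n}$ with exactly the signs in the statement. I would cross-check the bookkeeping against the Fourier picture, where $R_{j,0}$ is the transform of the half-line Laguerre profile $L_{|j|-1}^{(1)}(2|x|\nu)\E^{-|x|\nu}$ of \eqref{eq:qx} and $\mathcal{C}^\pm$ are the projections onto the two half-lines: for $\alpha j<0$ the oscillatory shift carries part of this profile across the origin, the projection truncates it, and the truncated piece re-expands in the same Laguerre family by the Laguerre translation formula --- the analytic mirror of the residue computation --- giving an independent confirmation of the coefficients.
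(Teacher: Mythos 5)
Your proposal is correct and follows essentially the same route as the source the paper cites for this theorem (and as the machinery the paper itself records): the $\alpha j \geq 0$ block is the standard Plemelj/Hardy-space projection argument, and for $\alpha j < 0$ your contour closing and reduction to the single order-$|j|$ residue at $\omega' = -\sigma \I \nu$ is precisely what is encoded in \eqref{eq:rja} via $r_{j,\alpha}$, with your triangular binomial change of basis from powers of $(\omega + \sigma \I \nu)^{-1}$ to $\{R_{n,0}\}$ producing the coefficients $\eta_{j,n}$. As a point in your favor, checking your scheme directly at small $j$ (e.g., $j=2$, $\alpha<0$) confirms it, modulo two transcription slips in the formulas as displayed here --- $\binom{j-1}{n}$ should read $\binom{j-1}{n-1}$ in $\gamma_{j,n}$ (equivalently $\gamma_{j,n} = -\E^{-|\alpha|\nu} L^{(n)}_{|j|-n}(2|\alpha|\nu)$), and the sum defining $\eta_{j,n}$ should involve $\gamma_{j,\ell}$ rather than $\gamma_{\ell,n}$ --- exactly the bookkeeping your change-of-basis identification gets right.
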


The another important expression from \cite{Trogdon2013} is
\begin{align}\label{Laguerre}
\hat {R}_j(\omega) = \begin{cases}
0 & \sign(j) = -\sign(\omega), ~~\omega \neq 0,\\
- 2 \pi |j| \nu & \omega = 0,\\
\displaystyle -4 \pi e^{- |\omega| \nu} \nu L_{|j|-1}^{(1)}(2 |\omega|\nu) & \text{otherwise},\end{cases}
\end{align}
where $L^{(\alpha)}_n(x)$ is the generalized Laguerre polynomial of order $n$ \cite{DLMF}.

We perform all of our computations in coefficient space, that is, given the coefficients in the finite sum $f$ of the $R_{j,\alpha}$'s, we want to compute the corresponding coefficients in the corresponding expansion of $\mathcal C^\pm f$.  The above formulae achieve this task, but inefficiently.

But these formulae do provide an important theoretical guide to speed up the computation.  From \cite{Trogdon2014} we define
\begin{align*}
 r_{j,\alpha}\left(  \frac{-2 \I \sigma \nu }{\omega + \sigma \I \nu} \right)&:=\mathrm{Res}_{k' = - \sigma \nu \I }\,\left\{ R_{j,\alpha}(\omega') \frac{1}{\omega' - \omega} \right\} = \sum_{n = 1}^{|j|} \gamma_{j,n} \left( \frac{-2 \I \sigma \nu }{\omega + \sigma \I \nu} \right)^n, \\ \sigma &= \sign(j).
\end{align*}
This is convenient when $\alpha j < 0$ and we can rewrite everything in terms of $r_{j,\alpha}$:
\begin{align}\label{eq:rja}
\begin{split}
  \mathcal C^+ R_{j,\alpha}(\omega) &= \begin{cases}
    -r_{j,\alpha}\left(  \frac{-2 \I \sigma \nu }{\omega + \sigma \I \nu} \right) & j > 0,
    \\ 
    R_{j,\alpha}(\omega) + r_{j,\alpha}\left(  \frac{-2 \I \sigma \nu }{\omega + \sigma \I \nu} \right) & j < 0, \end{cases}\\
\mathcal C^- R_{j,\alpha}(\omega) &= \begin{cases} - R_{j,\alpha}(\omega) - r_{j,\alpha}\left(  \frac{-2 \I \sigma \nu }{\omega + \sigma \I \nu} \right) & j > 0 \\ r_{j,\alpha}\left(  \frac{-2 \I \sigma \nu }{\omega + \sigma \I \nu} \right)& j < 0,\end{cases}
\end{split}
\end{align}
where $\sigma = \sign(j)$ in each line.

Using \eqref{eq:krum} we find
\begin{align*}
\gamma_{j,n}(\alpha) = - \E^{- |\alpha| \nu} L_{|j|-n}^{(n)}( 2 |\alpha| \sigma).
\end{align*}
As a secondary, but important note, it follows that
\begin{align}\label{eq:large-k-rj}
    \lim_{\omega \to \infty} \omega r_{j,\alpha}\left(  \frac{-2 \I \sigma \nu }{\omega + \sigma \I \nu} \right) = -2 \I \sigma \nu \gamma_{j,1}.
\end{align}
This can be used to give a derivation of \eqref{Laguerre}.

We then use the well-known relation for Laguerre polynomials  \cite{DLMF}
\begin{align*}
L_n^{(\alpha)}(x) = L_n^{(\alpha+1)}(x) - L_{n-1}^{(\alpha + 1)}(x),
\end{align*}
to obtain the recurrence relation for $j \geq 1$
\begin{align*}
  r_{j,\alpha}(z) = (1 + z) r_{j-1,\alpha}(z) + z ( L_{j-1}^{(1)}(2 |\omega| \nu) - L_{j-2}^{(1)}(2 |\omega| \nu)),
\end{align*}
where $L_{-1}^{(1)}(x) := 0$, $r_{0,\alpha} := 0$.  Additionally. $L_j^{(1)}(x)$ can be computed by its three-term recurrence relation \cite{DLMF}.  So, given a vector $\vec{\omega}$ of $m$ values for $\omega$, with $\vec z =  \frac{-2 \I \sigma \nu }{\vec \omega + \sigma \I \nu}$, the matrix
\begin{align*}
  M_\sigma(\vec \omega) = \begin{bmatrix} r_{1,\alpha}(\vec z) & r_{2,\alpha}(\vec z) & \cdots & r_{m,\alpha}(\vec z) \end{bmatrix}
\end{align*}
can be constructed in $O(m^2)$ operations building each column from the previous column.  Furthermore, to compute $M_\sigma(\vec k) \vec c$ the matrix $M_\sigma(\vec k)$ never needs to be constructed.  So, for $f_+(\omega) = \sum_{j=1}^m c_j R_{j,\alpha}(\omega)$ with $\vec c = [c_1, \ldots,c_m]^T$ we have
\begin{align*}
  \mathcal C^+ f_+(\vec \omega) &= \begin{cases} f_+(\vec \omega) & \alpha \geq 0,\\
    -M_{+1}(\vec \omega) \vec c  & \alpha < 0,\end{cases}\\
  \mathcal C^-f_+(\vec \omega) &= \begin{cases} 0& \alpha \geq 0,\\
   -f_+(\vec \omega) - M_{+1}(\vec \omega) \vec c  & \alpha < 0.\end{cases}
\end{align*}
Similarly, for $f_-(\omega) = \sum_{j=1}^m c_{-j} R_{-j,\alpha}(\omega)$ with $\vec c = [c_{-1}, \ldots,c_{-m}]^T$ we have
\begin{align*}
  \mathcal C^+ f_-(\vec \omega) &= \begin{cases} f_-(\vec \omega) + M_{-1}(\vec \omega) \vec c & \alpha \geq 0,\\
    0  & \alpha < 0,\end{cases}\\
  \mathcal C^-f_-(\vec \omega) &= \begin{cases} M_{-1}(\vec \omega) \vec c & \alpha \geq 0,\\
   -f_-(\vec \omega)   & \alpha < 0,\end{cases}
\end{align*}
To compute the Cauchy operator one separates the resulting function into an oscillatory function and a non-oscillatory function.  The coefficients in the expansion of the oscillatory function are always found by multiplying the original coefficients by $1,-1$ or $0$.  The computation of the non-oscillatory part is more difficult.  To do this, one evaluates it pointwise on the real axis at the appropriate points for the interpolation operator $\mathcal R_{n,0}$ using the matrix $M_\sigma$.  Then the interpolation operator can be applied to compute the coefficients.  This gives a stable numerical algorithm that bypasses the hypergeometric function expansion defined above.

So, for example, if one wants to compute  $\mathcal C^+ f(\omega)$ where $f(\omega) = \sum_{j=-m}^m c_j R_{j,\alpha}(\omega)$ when $\alpha > 0$, we have
\begin{align}\label{eq:cauchy_better}
    \mathcal C^+ f(\omega) &= \sum_{j=-m}^m c_j R_{j,\alpha}(\omega) + \mathcal R_{2m+1,0}\vec v (\omega),\\
    \vec v &= M_\sigma (\vec \omega) \vec c, \quad \sigma = -1,\notag\\
    \vec \omega &= \begin{bmatrix} T_\nu^{-1}(\E^{\I \theta_0}) & \cdots & T_\nu^{-1}(\E^{\I \theta_{2m}})\end{bmatrix},\notag\\
    \vec c &= \begin{bmatrix} c_{-1} & \cdots & c_{-m}\end{bmatrix}^T.\notag
\end{align}
Or when $\alpha < 0$
\begin{align}\label{eq:cauchy_better2}
    \mathcal C^+ f(\omega) &= -\mathcal R_{2m+1,0}\vec v (\omega),\\
    \vec v &= M_\sigma (\vec \omega) \vec c, \quad \sigma = 1,\notag\\
   \vec \omega &= \begin{bmatrix} T_\nu^{-1}(\E^{\I \theta_0}) & \cdots & T_\nu^{-1}(\E^{\I \theta_{2m}})\end{bmatrix},\notag\\
    \vec c &= \begin{bmatrix} c_{1} & \cdots & c_{m}\end{bmatrix}^T.\notag
\end{align}
This is the case where the Cauchy integral of an oscillatory rational function becomes a non-oscillatory rational function and this operator can be thought of as a sort of smoothing operator.  The formulae for $\mathcal C^-f$ can be then deduced from $\mathcal C^+ - \mathcal C^- = I$.

This gives a reasonably fast method to compute the coefficients $\eta_{j,n}$, and more importantly, to compute the coefficients of the expansion of $\mathcal C^{\pm}[f_+ + f_-]$ in the basis $R_{j,\alpha}$, in $O(m^2)$ operations.  In practice, one may want to replace $2m+1$ by an even number (or some power of 2) that is larger than $2m+1$ to balance performance the computation of $M_\sigma(\vec k) \vec c$ with that of the FFT that is used in the application of $\mathcal R_{n,0}$.


\bibliographystyle{plain}
\bibliography{references}
\end{document}